\documentclass[10pt]{amsart}
\usepackage{amssymb, amsfonts, latexsym, amsthm, amsmath, eucal, graphics, hyperref}

\theoremstyle{plain}
\newtheorem{theorem}{Theorem}

\newtheorem{lemma}{Lemma}
\newtheorem{cor}[theorem]{Corollary}
\newtheorem{prop}[theorem]{Proposition}

\newtheorem{conjecture}{Conjecture}

\theoremstyle{definition}
\newtheorem{defn}[theorem]{Definition}

\newtheorem{remark}{Remark}

\newtheorem{question}{Question}

\oddsidemargin         .325 in
\evensidemargin        .325 in
\textwidth             5.8  in
\topmargin             0 in
\textheight            8.5 in

\newcommand{\ba}{\backslash}
\newcommand{\Q}{\mathbb{Q}}
\newcommand{\R}{\mathbb{R}}
\newcommand{\N}{\mathbb{N}}
\newcommand{\Z}{\mathbb{Z}}

\newcommand{\ov}{\overline}
\newcommand{\ds}{\displaystyle}
\newcommand{\codim}{\operatorname{codim}}
\newcommand{\length}{\operatorname{length}}
\newcommand{\val}{\operatorname{val}}

\begin{document}
\pagestyle{headings}

\title[Codimensions of Newton Strata for $SL_3(F)$]{Codimensions of Newton Strata for $SL_3(F)$ in the Iwahori Case}
\author{E. T. Beazley}
\address{University of Chicago, Department of Mathematics, 5734 S. University Ave., Chicago, IL 60637}
\email{townsend@math.uchicago.edu}

\begin{abstract}
We study the Newton stratification on $SL_3(F)$, where $F$ is a Laurent power series field.  We provide a formula for the codimensions of the Newton strata inside each component of the affine Bruhat decomposition on $SL_3(F)$.  These calculations are related to the study of certain affine Deligne-Lusztig varieties.  In particular, we describe a method for determining which of these varieties is non-empty in the case of $SL_3(F)$.
\end{abstract}

\maketitle

\begin{section}{Introduction}\label{S:intro}

\renewcommand{\thefootnote}{}
\footnote{\textbf{Key Words}: Newton polygon, Newton stratification, isocrystal, affine Bruhat decomposition, affine Deligne-Lusztig variety, Frobenius-linear characteristic polynomial}
\footnote{\textbf{Mathematics Subject Classification (2000)}: Primary 20G25, Secondary 14L05}

The study of abelian varieties in positive characteristic dates back to Andr\'{e} Weil in the 1940s \cite{Weil},
and was further developed in the 1960s by Barsotti \cite{Bar}.  In \cite{Gro}, Grothendieck describes the theory
of $F$-crystals and Barsotti-Tate groups, or $p$-divisible groups, which are fundamental to the study of
algebraic geometry in characteristic $p >0$.  Isogeny classes of $F$-crystals are indexed by combinatorial
objects called Newton polygons, and these polygons thus naturally provide a stratification on the space of
$F$-crystals.  In the late 1960s, Grothendieck proved his famous specialization theorem, which asserts that Newton polygons ``go down'' under specialization, according to the conventions of this paper.  In particular, the set of points for which the associated Newton polygons lie below a given Newton polygon is Zariski closed.  Grothendieck conjectured the converse to his theorem in 1970, which says that given a $p$-divisible group $G_0$ having Newton polygon $\gamma$, then for any $\beta$ lying above $\gamma$, there exists a deformation of $G_0$ whose generic fiber has Newton polygon equal to $\beta$. 
This conjecture was proved by Oort a quarter of a century later (see \cite{dJO}, \cite{Onp&formalgps}, \cite{Onpinmoduli}).
In the late 1970s, Katz extended these ideas of Grothendieck to Newton polygons associated to families of
$F$-crystals \cite{Kat}.

By the early 1980s, work on the moduli problem for abelian varieties of fixed dimension in
positive characteristic was well underway, having been outlined by Mumford and applied by Norman and Oort
\cite{NO}.  Analogs of Grothendieck's specialization theorem and its converse were formulated and proved in the context of (moduli spaces of) abelian varieties (see \cite{Man}, \cite{Ta}, \cite{Kob}, \cite{Omoduli&NP}, \cite{Onp&formalgps}, \cite{Onpinmoduli}, \cite{Onp&pdiv}).  Oort has also formulated several conjectures and results about irreducibility and the dimensions and number of
the components, which generalize his work with Li on the supersingular case (see \cite{Omoduliposchar}, \cite{Onpinmoduli}, \cite{LiO}).  In \cite{dJO}, de Jong and Oort prove a purity result that gives an estimate for the codimensions of
the Newton polygon strata in moduli spaces of $p$-divisible groups.  Viehmann computes the dimensions and the number of connected
and irreducible components of moduli spaces of $p$-divisible groups in \cite{VieGlobal} and \cite{VieModuli}.  More
information is known about the structure of the Newton strata and the poset of slopes of Newton polygons in the
special case of the Siegel moduli space (see \cite{Onpinmoduli}, \cite{WedCong}).  Recent work by Harashita extends some of
Oort's work determining when certain Newton strata are non-empty \cite{Har}.  There has also been recent
interest in the foliation structure on the Newton polygon strata (see \cite{Ofoliations}, \cite{AG}).  For a survey of
both classical and newer results on the properties of the Newton stratification for moduli spaces of abelian
varieties in positive characteristic, see \cite{Omoduliposchar}, \cite{vdGO}, and the recent survey article by Rapoport \cite{RapNewton}.

In the mid-1990s, Rapoport and Richartz generalized Grothendieck's specialization theorem and the notion of the
Newton stratification to $F$-isocrystals with $G$-structure, where $G$ is a reductive group over a discretely
valued field \cite{RR}.  These generalized Newton strata are indexed by $\sigma$-conjugacy classes, where
$\sigma$ is the Frobenius automorphism.  There is a natural bijection between the set of $\sigma$-conjugacy
classes and a suitably generalized notion of the set of Newton polygons, which was described by Kottwitz in \cite{KotIsoI} and \cite{KotIsoII}.  The poset of Newton
polygons in the context of reductive group theory has interesting combinatorial and Lie-theoretic
interpretations, which were described by Chai in \cite{Ch}.  In particular, Chai generalizes the work of Li and
Oort to the case of $F$-isocrystals with $G$-structure, where $G$ is any quasisplit reductive group over a
non-Archimedean local field $F$, proving that the poset of Newton slope sequences is catenary; \textit{i.e.},
any two maximal chains have the same length.  Chai also provides a root-theoretic formula for the expected
dimension of the Newton strata.  Rapoport lists several other conjectures in the context of these generalized
Newton strata in \cite{RapNewton}.

Chai was primarily interested in applications to the reduction modulo $p$ of a Shimura variety, although his
results are more general.  Interest in the special case of Shimura varieties has roots in implications toward
the local Langlands conjecture, as demonstrated by Harris and Taylor \cite{HT}.  Similar topological and
geometric questions have been answered about the Newton stratification on Shimura varieties.  Wedhorn has
demonstrated that these strata are locally closed, and he provides a codimension formula in \cite{WedDim}. In
addition, he showed that the ordinary locus is open and dense \cite{WedOrdinariness}.  B\"{u}ltel and Wedhorn have studied
the relationship among the Newton polygon stratification, the Ekedahl-Oort stratification, and the final
stratification in \cite{BW}.  Recent work by Yu generalizes considerations of Oort's Siegel case to type $C$
families of Shimura varieties \cite{Yu}. Again, for a comprehensive overview of known and conjectured results on
Shimura varieties, see Rapoport's survey \cite{RapShimura}. Haines has also written an article on Shimura varieties
that provides an introduction to the field through the theory of local models \cite{Hai}.

The notion of the Newton stratification has since arisen in many other contexts.  Goren and Oort discuss the
relationship between the Newton polygon strata and the Ekedahl-Oort strata for Hilbert modular varieties in
\cite{GO}.  Vasiu studies latticed $F$-isocrystals over the field of Witt vectors in \cite{Vas}, in which he proves a
purity property on the Newton polygon stratification. Blache and F\'{e}rard explicitly describe the Newton stratification for polynomials over finite
fields in their recent work \cite{BF}. In \cite{KotNewtStrata}, Kottwitz describes the Newton stratification in the adjoint
quotient of a reductive group. Goresky, Kottwitz, and MacPherson discuss the root valuation strata in Lie
algebras over Laurent power series fields \cite{GKM}, and while this stratification is not a Newton
stratification per se, the techniques employed are reminiscent of those that arise in the situation of a Newton
stratification.

In all of the aforementioned contexts, there are several common topological, geometric, and combinatorial
themes.  The goal of this paper is to address these common themes in the specific context of the Newton
stratification on the algebraic group $G=SL_3(F)$ in the so-called Iwahori case. Here, $F$ is the field of
Laurent power series over an algebraic closure of a finite field.  We begin by reviewing the theory of
isocrystals over $F$ and the associated Newton stratification on $SL_3(F)$.  In order to develop a topology and
notions of irreducibility and codimension, in Section \ref{S:admis} we define admissible subsets of Iwahori (double) cosets of $SL_3(F)$,
which are sets that satisfy a property analogous to Vasiu's Crystalline boundedness principle \cite{Vas}.  We
then compute explicit equations that characterize the isocrystals having Newton polygons lying below
a given polygon.  These equations turn out to be polynomial in finitely many coefficients of the entries of a given $g \in G$, which guarantees admissibility.   Our main theorem is a sort of purity result.  Namely, we
show that the codimension between adjacent strata jumps by one.  We provide two versions of a formula for the
codimensions of the Newton strata inside each component of the affine Bruhat decomposition on $G =
I\widetilde{W}I$, where $I$ is the Iwahori subgroup and $\widetilde{W}$ the affine Weyl group.  Following Chai,
we present both root-theoretic and combinatorial versions of this codimension formula, and we provide the combinatorial version here.  This theorem appears as Theorem \ref{T:main} in Section \ref{S:thm}.

\vspace{5pt}
\noindent \textbf{Theorem}
\textit{Let $G=SL_3(F)$ and fix $x \in \widetilde{W}$.  For a Newton slope sequence $\lambda \in \mathcal{N}(G)_x$, the
subset $(IxI)_{\leq \lambda}$ of $IxI$ is admissible, and} \begin{equation*} \codim\left(
(IxI)_{\leq \lambda} \subseteq  IxI \right) =\length_{\mathcal{N}(G)_x}[\lambda, \nu_x].\end{equation*} \textit{Moreover, the closure of a given Newton stratum $(IxI)_{\lambda}$ in $IxI$ is precisely $(IxI)_{\leq \lambda}$.  Therefore, for two Newton polygons $\lambda_1< \lambda_2$ which are adjacent in the poset $\mathcal{N}(G)_x$,} \begin{equation*} \codim\left( (IxI)_{\leq \lambda_1} \subset (IxI)_{\leq \lambda_2} \right) = 1. \end{equation*} For a definition of the poset $\mathcal{N}(G)_x$, see Section \ref{S:gpthy}, although we also discuss $\mathcal{N}(G)_x$ later in this introduction.  We also introduce the Newton strata $(IxI)_{\lambda}$ and $(IxI)_{\leq \lambda}$ in Section \ref{S:gpthy}.  We denote by $\nu_x$ the generic slope sequence in $\mathcal{N}(G)_x$, which both appears later in the introduction and is formally defined in Section \ref{S:generic}.  The length of the segment $[\lambda,\nu_x]$ is defined in Section \ref{S:length}.

The calculations performed in Sections \ref{S:valcalc} and \ref{S:codimcalc} provide a very concrete description of the closed Newton strata $(IxI)_{\leq \lambda}$.  These descriptions vary greatly with $x$, but there are some things that can be said regarding their geometric structure.  First, the sets $(IxI)_{\leq \lambda}$ are often, but not always, irreducible.  In the cases in which the closures of the Newton strata are irreducible, they are fiber bundles over an irreducible affine scheme, having irreducible fibers (see Section \ref{S:thmproof}).  For certain $x$ the structure of $(IxI)_{\leq \lambda}$ is a bit more complicated.  When $(IxI)_{\leq \lambda}$ is reducible, each irreducible component is the closure of a fiber bundle over an irreducible affine scheme whose fibers are themselves irreducible affine schemes.  We see in Section \ref{S:thmproof}, however, that the space $(IxI)_{\leq \lambda}$ is equicodimensional; \textit{i.e.}, all of the irreducible components have the same codimension inside $IxI$.  Since the geometric structure of the Newton strata varies so greatly with $x$, we do not make formal statements regarding irreducibility or equicodimensionality, although we provide informal discussion when we are able.  Independent of whether or not the space $(IxI)_{\leq \lambda}$ is irreducible, we shall see that it is precisely the closure of the Newton stratum $(IxI)_{\lambda}$, which is a locally closed subset of $IxI$.  This fact demonstrates, in particular, that these Newton strata satisfy the strong stratification
property; \textit{i.e.} the closure of a given stratum is the union of locally closed subsets defined by all
Newton polygons which lie below that of the given stratum and are comparable in the poset $\mathcal{N}(G)_x$.  Theorem \ref{T:main} then says that for two Newton polygons that are adjacent in the poset $\mathcal{N}(G)_x$, the codimension of the smaller stratum in the closure of the larger is always equal to 1.  

The Newton strata associated to Shimura varieties are related to the
study of certain affine Deligne-Lusztig varieties.  Rapoport's dimension
formula for affine Deligne-Lusztig varieties inside the affine Grassmannian was proved in two steps by G\"{o}rtz, Haines, Kottwitz, and Reuman \cite{GHKR} and
Viehmann \cite{VieDim}.  Viehmann has also described the set of connected components of these affine Deligne-Lusztig varieties \cite{VieConncpt}.  Our study of the Newton strata in the Iwahori case relates to certain affine
Deligne-Lusztig varieties inside the affine flag manifold, about which less is known. To every $\sigma$-conjugacy class in $G$ and every
element of the affine Weyl group, we can associate the affine Deligne-Lusztig variety \begin{equation*} X_x(b) :=
\{ g \in G(F)/I : g^{-1}b\sigma(g) \in IxI\}. \end{equation*}  For a fixed $x \in \widetilde{W}$, we explicitly describe in Section \ref{S:slopes}
the poset $\mathcal{N}(G)_x$ of Newton polygons that arise for elements in the double coset $IxI$, when $G = SL_3(F)$. As a direct consequence, we can answer the question of which of the associated affine
Deligne-Lusztig varieties are non-empty.  This result was proved by Reuman for the
case $b=1$ \cite{Reu}, but our results may be applied to any $b\in G$.  We believe that it might be possible to apply arguments similar to those we employ in this paper to
obtain a description for the poset $\mathcal{N}(G)_x$, at least in the case of $G=SL_n(F)$ (see Question
\ref{T:nonemptyQ}, Section \ref{S:adlv}).

In addition to giving information about non-emptiness of certain affine Deligne-Lusztig varieties, our work
gives rise to a conjectural relationship between the dimension of these varieties and the codimensions of the
associated Newton strata.  \begin{conjecture}\label{T:dimconj} Let $x\in \widetilde{W}$ be an element of the
affine Weyl group and $b \in G$.  The relationship between the dimension of the affine Deligne-Lusztig variety
$X_x(b)$ and the codimension of the associated set of Newton strata is given by \begin{equation}\label{E:dimconj} \dim
X_x(b) + \codim((IxI)_{\leq \ov{\nu}(b)} \subseteq IxI) = \ell(x) - \langle 2\rho, \ov{\nu}(b)\rangle.
\end{equation}  Here, $\ell(x)$ is the length of $x$, $\rho$ is the half-sum of the positive roots, and
$\ov{\nu}(b)$ is the Newton slope sequence associated to $b$.  \end{conjecture}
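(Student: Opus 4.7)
The plan is to derive the conjecture from Theorem \ref{T:main} by way of a dimension-count connecting $X_x(b)$ to the Newton stratum $(IxI)_{\ov{\nu}(b)}$ inside $IxI$. The starting point is the identification of $IxI \cap [b]$ with the stratum $(IxI)_{\ov{\nu}(b)}$, where $[b]$ denotes the $\sigma$-conjugacy class of $b$. By Theorem \ref{T:main}, this stratum is open dense in $(IxI)_{\leq \ov{\nu}(b)}$, and hence has codimension $\length_{\mathcal{N}(G)_x}[\ov{\nu}(b),\nu_x] = \codim((IxI)_{\leq \ov{\nu}(b)} \subseteq IxI)$ in $IxI$.

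Next, I would consider the Lang-type map
\begin{equation*}
L_b : Y_x(b) := \{ g \in G : g^{-1}b\sigma(g) \in IxI \} \longrightarrow (IxI) \cap [b], \qquad g \mapsto g^{-1}b\sigma(g).
\end{equation*}
This map is surjective onto the Newton stratum, and its fibers are torsors under the left action of the $\sigma$-centralizer $J_b = \{ j \in G : j^{-1}b\sigma(j) = b \}$. Simultaneously, $Y_x(b)$ carries a free right $I$-action with quotient $X_x(b)$. Combining these two bundle structures gives the virtual dimension identity
\begin{equation*}
\dim X_x(b) \;=\; \dim\bigl((IxI)_{\ov{\nu}(b)}\bigr) \,+\, \dim J_b \,-\, \dim I,
\end{equation*}
which on applying Theorem \ref{T:main} rewrites as $\dim X_x(b) = \ell(x) - \codim((IxI)_{\leq \ov{\nu}(b)} \subseteq IxI) + (\dim J_b - \dim I)$. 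The conjecture then reduces to the identity $\dim J_b - \dim I = -\langle 2\rho, \ov{\nu}(b)\rangle$. By Kottwitz's description, $J_b$ is the group of $F$-points of an inner form of the Levi centralizer $M$ of $\ov{\nu}(b)$ in $G$, so this identity should be the ind-scheme counterpart of the finite-dimensional equality $\dim(G/M) = \langle 2\rho, \ov{\nu}(b) \rangle$, once one matches Iwahori subgroups of $G$ and of the inner form underlying $J_b$.

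The hard part will be rigorously justifying the virtual dimension manipulations in the second step: $\dim Y_x(b)$, $\dim I$, and $\dim J_b$ are all infinite as ind-schemes, and extracting a correct finite-dimensional identity requires compatible truncations of $Y_x(b)$ that are stable under both the right $I$-action and the left $J_b$-action, plus a careful comparison of parahoric group schemes of $G$ and of the inner form underlying $J_b$. For $G = SL_3$ a more direct verification should be feasible: using the explicit enumeration of $\mathcal{N}(G)_x$ from Section \ref{S:slopes} together with Reuman's dimension formula \cite{Reu} in the basic case $b = 1$, one can tabulate both sides of \eqref{E:dimconj} and check agreement case-by-case across the finitely many shapes of non-basic $\ov{\nu}(b)$ that arise. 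A successful bookkeeping check there would also serve as strong evidence that the conceptual approach via $L_b$ is correctly calibrated.
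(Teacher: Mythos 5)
The statement you are trying to prove is a conjecture: the paper offers no proof of \eqref{E:dimconj}, and the only evidence it gives is the verification in the single case $G=SL_3(F)$, $b=1$, obtained by combining Reuman's computation of $\dim X_x(1)$ with the codimension formula of Theorem \ref{T:main}. Your proposal does not close this gap. The entire content of the conjecture is concentrated in the two steps you defer: the ``virtual dimension identity'' $\dim X_x(b) = \dim\bigl((IxI)_{\ov{\nu}(b)}\bigr) + \dim J_b - \dim I$ and the identification $\dim J_b - \dim I = -\langle 2\rho, \ov{\nu}(b)\rangle$. Every quantity in these expressions is infinite, and no truncation scheme compatible with both the right $I$-action and the left $J_b$-action is produced; you acknowledge this yourself, which means the argument is a heuristic calibration rather than a proof. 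Moreover, this is exactly the heuristic that motivated the earlier (incorrect in general) guess of Kottwitz mentioned in the introduction: the paper's Corollary \ref{T:maincor} shows that naive root-theoretic dimension counts can be off by a constant for certain $x$, which is precisely why the conjecture is stated with $\codim((IxI)_{\leq \ov{\nu}(b)} \subseteq IxI)$ kept as a separate, combinatorially defined term. A torsor-style count that is not sensitive to the finer structure of $J_b$ (for instance, to defect-type corrections when $b$ is not superbasic and $J_b$ is an inner form of a Levi) cannot be trusted to produce the exact constant on the right-hand side of \eqref{E:dimconj}, and in the Iwahori setting $\dim X_x(b)$ is itself unknown in general, so there is nothing independent to compare against.

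Your fallback --- tabulating both sides for $SL_3$ using Section \ref{S:slopes} and Reuman's formula --- reproduces what the paper already does, but only for $b=1$: for $b\neq 1$ the dimensions $\dim X_x(b)$ are not available from \cite{Reu} or from this paper (the paper only determines \emph{non-emptiness} of $X_x(b)$ via $\mathcal{N}(G)_x$), so the case-by-case check cannot be completed, let alone yield the statement for all $x$ and $b$. In short, what you have is a reasonable plausibility argument for why \eqref{E:dimconj} should hold, essentially the same heuristic that led to the conjecture being formulated, but not a proof, and no proof is contained in the paper either.
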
 \noindent An earlier version of
this conjecture, due to Kottwitz, assumed that the codimensions of the Newton strata in the Iwahori case were
given by the same root-theoretic expression appearing in \cite{Ch}, and combined this with the right-hand side of \eqref{E:dimconj} to provide a
conjecture for the dimension of $X_x(b)$.  We demonstrate in Corollary \ref{T:maincor} that for certain affine
Weyl group elements, we must correct this initial guess for $\codim((IxI)_{\leq \ov{\nu}(b)} \subseteq IxI)$ by -1.  In fact, Theorem
\ref{T:main} suggests that the codimensions of the Newton strata are most conveniently expressed combinatorially
in terms of lengths in the poset of Newton slopes, rather than root-theoretically.  Conjecture
\ref{T:dimconj} accounts for this fact and thus includes the codimension as a separate term.  In the case $G=SL_3(F)$ and $b=1$, Conjecture
\ref{T:dimconj} is true, as \cite{Reu} and our work demonstrate.  We refer the reader to \cite{GHKR} for
conjectural formulas for the dimensions of the varieties $X_x(b)$ for more general $G$. Although neither these
dimensions nor the codimensions of the Newton strata are known except in a few cases, the right-hand side of the equality in Conjecture \ref{T:dimconj} is simple and can be easily computed in general.

The study of the poset of Newton slope sequences for $SL_3(F)$ has led to the several additional combinatorial
questions.  In the Iwahori case, the problem of determining the Newton slope sequence associated
to the open stratum remains unsolved.  Even in the case of $SL_3(F)$, in which we explicitly compute this
generic slope sequence $\nu_x$ for every $x \in \widetilde{W}$, we cannot yet do better than providing a list.  We
expect there to be a closed root-theoretic formula that would provide an analog of Mazur's inequality in the
Iwahori case (see Question \ref{T:nuxform}, Section \ref{S:generic}).  When $G=SL_n(F)$ and $x$ corresponds to
an alcove lying in the dominant Weyl chamber, we can formulate a precise conjecture determining the generic
slope.
\begin{conjecture}\label{T:domconj}
For $G = SL_n(F)$, let $x=\pi^{\mu}w \in \widetilde{W}$ be such that $\mu$ is dominant; \textit{i.e.}, $\langle \alpha_i,\mu\rangle \geq 0$ for all simple roots $\alpha_i$ in $\text{Lie}(G)$, or equivalently that $\mu_1\geq \cdots \geq \mu_n$.
Then $\nu_x = -\mu_{\text{dom}}$.
\end{conjecture}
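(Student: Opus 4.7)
\noindent\textbf{Proof plan for Conjecture \ref{T:domconj}.} Since $\mu$ is dominant, $\mu_{\text{dom}} = \mu$, and the goal is to prove that the generic Newton slope sequence on $IxI$ equals $-\mu$. The plan is to establish two matching inequalities in the Newton order on $\mathcal{N}(G)_x$: an upper bound $\ov{\nu}(g) \leq -\mu$ for every $g \in IxI$ coming from a Mazur-type estimate, together with an explicit element of $IxI$ that saturates the bound.

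For the upper bound, I would apply the Kottwitz--Rapoport analogue of Mazur's inequality in the Iwahori setting. Concretely, writing $g = a\,\pi^{\mu} w\, b$ with $a,b \in I$, the Iwahori subgroup preserves the standard lattice chain $(\Lambda_k)$ in $F^n$, so $g$ sends $\Lambda_k$ into a lattice of valuation $\mu_1 + \cdots + \mu_k$ in $\wedge^k F^n$. Comparing Hodge and Newton polygons in each exterior power then yields the partial-sum bounds on the Newton slopes of $g\sigma$ that correspond to $\nu_x \leq -\mu$ in the paper's sign convention, with equality forced for $k=n$ by the $SL_n$ determinant condition.

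For attainment, the naive candidate $g_0 = x = \pi^{\mu} w$ is insufficient when $w$ is non-trivial, because the cycles of $w$ force $x\sigma$ to average the $\mu$-values along each cycle rather than display them individually. I would instead take $g_0 = u\,x$ with $u$ a generic upper-triangular unipotent element of $I$ whose strictly above-diagonal entries are units; using the explicit valuation machinery of Section \ref{S:valcalc}, one checks that the leading $k\times k$ minor of $\wedge^k g_0$ has valuation exactly $\mu_1 + \cdots + \mu_k$ for each $k$. Since $\mu$ is dominant, the sequence of partial sums is concave, so Newton's theorem applied to the characteristic polynomial of $g_0\sigma$ forces the Newton slopes to be precisely $\mu_1, \ldots, \mu_n$, as required.

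The main obstacle is the simultaneous saturation in the attainment step: one must produce a single $g_0$ that meets all $n-1$ Mazur bounds at once, not merely one at a time. For $SL_3$ the explicit case-by-case computations of Sections \ref{S:valcalc}--\ref{S:codimcalc} already handle this. For general $n$ a natural strategy is induction on $n$ via the Levi subgroup $SL_{n-1}$ obtained by deleting the last simple root: realize the top $n-1$ slopes via the inductive hypothesis, then use the residual Iwahori degrees of freedom in the bottom row and column to pin down the final slope. Dominance of $\mu$ enters essentially here, since only then does the Levi piece of $x$ remain a dominant translation; relaxing dominance forces an anti-dominant Levi part that collapses slopes, consistent with the conjecture's restriction to the dominant chamber.
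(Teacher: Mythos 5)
This statement is not proved in the paper at all: it is stated as Conjecture \ref{T:domconj} and left open. The only support the paper offers is the complete $SL_3$ case analysis, where the tables show that for every alcove in the dominant chamber $s_{121}(C^0)$ one indeed has $\nu_x=-\mu_{\text{dom}}$, plus the remark that higher-rank experimentation is consistent with it. So there is no proof to compare yours against; what can be assessed is whether your plan would close the gap, and as written it would not. Your overall two-step shape is sound: since $w\in K$ and $\mu$ is dominant, $IxI\subseteq K\pi^{\mu}K$, so Mazur's inequality gives the upper bound, and because $\nu_x$ is the unique maximum of $\mathcal{N}(G)_x$ it then suffices to exhibit a single $g_0\in IxI$ with $\ov{\nu}(g_0)=-\mu_{\text{dom}}$. (Two notational slips: in this paper $-\mu_{\text{dom}}$ means the dominant representative of the $W$-orbit of $-\mu$, i.e.\ $(-\mu_n,\dots,-\mu_1)$, not $-\mu$ itself, and with the paper's sign convention the slopes to be attained are the $-\mu_i$, not the $\mu_i$.)

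The genuine gap is the attainment step. You infer the Newton slopes of $g_0\sigma$ from the valuations of the principal $k\times k$ minors of $g_0$ "by Newton's theorem applied to the characteristic polynomial," but the relevant polynomial is the $\sigma$-linear characteristic polynomial built from a cyclic vector, and its coefficients are not minors of $g_0$: as Section \ref{S:charpoly} shows already for $n=3$, they are rational expressions mixing entries with their Frobenius twists and a denominator $D$, and their valuations are governed by cancellations that the paper must track case by case in Section \ref{S:valcalc}. Knowing that each leading minor of $g_0$ has valuation exactly $\mu_1+\cdots+\mu_k$ (a Hodge-type statement) does not by itself determine, or even bound below, the Newton slopes of the isocrystal $(F^n,g_0\sigma)$; one would have to prove genericity of the corresponding coefficient valuations, or argue via $\sigma$-conjugation to a normal form, and that is precisely the missing content. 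You also need to verify that your chosen $g_0$ admits $e_1$ (or some standard vector) as a cyclic vector, which fails for some elements of $IxI$ and forces the splitting arguments of Lemmas \ref{T:split} and \ref{T:GL_2} in the paper. Finally, the proposed induction through the Levi $SL_{n-1}$ is not well posed for general $w$: unless $w$ lies in the parabolic subgroup $S_{n-1}\subset S_n$, the element $x=\pi^{\mu}w$ has no "Levi piece," so the inductive step would at best apply after a further (unspecified) reduction of $w$, and dominance of $\mu$ alone does not supply it.
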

\noindent We point out that the negative sign appears in front of $\mu_{\text{dom}}$ as a result of making several less traditional conventions in our definitions (see Section \ref{S:gpthy}) which make our main calculations easier.  In general, the generic slope sequence $\nu_x$ does not necessarily coincide with the unique dominant element in the Weyl orbit of $-\mu$.  Rather, there is frequently a correction term needed, which comes in the form of a sum of simple coroots, at least for $x$ lying inside the so-called ``shrunken'' Weyl chambers (see \cite{Reu}).  Experimentation with groups of higher rank confirms that this observation should be able to be made precise for general $G$, although a closed formula is not yet obvious.

  Already in the case where $G=SL_3(F)$, the poset of Newton slope sequences has both interesting and surprising
properties.  We prove, for example, that the poset $\mathcal{N}(G)_x$ for $SL_3(F)$ is a ranked lattice, and we
expect that this might be the case for general $G$ (see Question \ref{T:rklattice}, Section \ref{S:length}). One
might guess that an analog of Manin's converse to the specialization theorem holds in the Iwahori case as well;
\textit{i.e.}, that $\mathcal{N}(G)_x$ consists of all possible Newton slope sequences lying below the generic
one.  \begin{figure}[h] \centering
\includegraphics{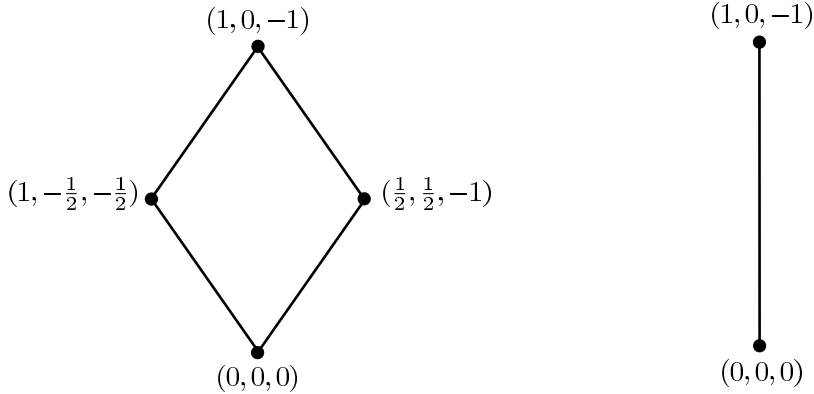}
\caption{The posets $\{\lambda \in \mathcal{N}(G) \mid \lambda \leq (1,0,-1)\}$ and $\mathcal{N}(G)_x$ for
$x=\pi^{(-2,0,2)}s_{121}$}\label{fig:poset}
\end{figure} We shall demonstrate that this suspicion is actually false. Consider the example in which $x \in
\widetilde{W}$ has finite Weyl part equal to the longest element in the Weyl group $s_{121}$, and translation
part equal to $(-2,0,2)$.  We show that the generic slope sequence is given by $(1,0,-1)$, in which case the
lattice consisting of all possible slope sequences in $\mathcal{N}(G)$ lying below $(1,0,-1)$ is given by the
poset on the left in Figure \ref{fig:poset}.  The actual description of the poset $\mathcal{N}(G)_x$ in this
example consists, however, of only the two elements $(1,0,-1)$ and $(0,0,0)$; see the picture on the right in
Figure \ref{fig:poset}.  Therefore, the length of the segment $[(0,0,0), (1,0,-1)]$ inside $\mathcal{N}(G)_x$
equals one, even though the length inside $\mathcal{N}(G)$ is two.  Moreover, the codimension of the stratum
associated to $(0,0,0)$ also equals one, since the slope sequences $(0,0,0)$ and $(1,0,-1)$ are adjacent in the poset $\mathcal{N}(G)_x$.  The author believes that it is possible to characterize the affine
Weyl group elements that produce these strange examples using the language of parabolic and Levi subgroups,
although we are not yet prepared to formulate a precise conjecture.

\vspace{5pt}
\noindent \textbf{Acknowledgments.}
The author would like to thank Robert Kottwitz for suggesting this problem, for numerous helpful comments on earlier versions of this paper, and for his unparalleled dedication as an advisor.  Eva Viehmann also provided several useful suggestions for improving the introduction.  The author also thanks the anonymous referee for facilitating important structural and technical improvements.

\subsection{Isocrystals over the discretely valued field $F$}\label{S:cyclic}
Let $k$ be a finite field with $q$ elements, and let $\overline{k}$ be an algebraic closure of $k$.  Denote by $\pi$ the uniformizing element of the discrete valuation ring $\mathcal{O}:= \ov{k}[[\pi]]$, having fraction field $F:=\overline{k}((\pi))$ and maximal ideal $P:= \pi \mathcal{O}$.  Normalize the valuation homomorphism $\val: F^{\times} \rightarrow \Z$ so that $\val(\pi) = 1$.  We can extend the usual Frobenius automorphism $x \mapsto x^q$ on $\ov{k}$ to a map $\sigma: F \rightarrow F$ given by $\sum a_i\pi^i \mapsto \sum a_i^q\pi^i$.

Recall that an isocrystal $(V,\Phi)$ is a finite-dimensional vector space $V$ over $F$ together with a
$\sigma$-linear bijection $\Phi: V\rightarrow V$; \textit{i.e.}, $\Phi(av)=\sigma(a)\Phi(v)$ for $a \in
F$ and $v \in V$. We now define a ring $R = F[\sigma]$, where any element $y\in R$ is of the form $y=\sum\limits a_i\sigma^i$, for $a_i \in F$.  Note that $R$ is not a polynomial ring in the usual sense, since for $a \in F$, we have that $\sigma a = \sigma(a) \sigma$.  Given an isocrystal $(V,\Phi)$ over $F$, defining $\sigma^iv:=\Phi^i(v)$ makes $V$ into an $R$-module.  We then have the following well-known
proposition describing isocrystals as cyclic modules over the ring $R$.

\begin{prop}\label{T:Cyclic}
Let $(V,\Phi)$ be an isocrystal and $R = F[\sigma]$, as above. Then $V$ is a cyclic $R$-module; \textit{i.e.},
$Rv = V$ for some $v$ in $V$.
\end{prop}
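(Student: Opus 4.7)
The plan is to induct on $n = \dim_F V$, the cases $n\le 1$ being trivial. For $n\ge 2$, pick any nonzero $v_0\in V$ and let $W:=Rv_0\subseteq V$ be the cyclic submodule it generates. Since $\Phi$ is a $\sigma$-linear bijection that preserves $F$-dimensions of subspaces and $W$ is $\Phi$-stable, a dimension count forces $\Phi(W)=W$, so $W$ is itself an isocrystal and $V/W$ inherits an isocrystal structure with $\dim_F(V/W)<n$. If $W=V$ we are done; otherwise, by the inductive hypothesis there is a cyclic generator $\ov u$ of $V/W$, which we lift to an element $u\in V$.

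I would then seek $a\in F$ for which $v_0+au$ is a cyclic generator of $V$, i.e.\ for which $v_0+au,\,\Phi(v_0+au),\,\ldots,\,\Phi^{n-1}(v_0+au)$ are $F$-linearly independent. Using $\Phi^j(v_0+au)=\Phi^j v_0+\sigma^j(a)\,\Phi^j u$ and expanding the corresponding determinant multilinearly column-by-column gives
\begin{equation*}
D(a)=\sum_{S\subseteq\{0,1,\ldots,n-1\}} c_S \prod_{j\in S}\sigma^j(a),
\end{equation*}
where $c_S$ is the determinant of the matrix $M_S$ whose $j$-th column is $\Phi^j u$ for $j\in S$ and $\Phi^j v_0$ otherwise. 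Set $k:=\dim_F W$ and $S_0:=\{k,k+1,\ldots,n-1\}$. In a basis of $V$ consisting of a basis of $W$ followed by lifts of a basis of $V/W$, the matrix $M_{S_0}$ is block triangular with nonsingular diagonal blocks: the last $n-k$ columns project in $V/W$ to the image of a basis of $V/W$ under the $\sigma^k$-linear bijection $\Phi^k$ on $V/W$, hence to a basis. Thus $c_{S_0}\ne 0$.

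The main obstacle is converting the nonvanishing of one coefficient into the nonvanishing of $D(a)$ for some $a\in F$; note in particular that $a=0$ gives $D(0)=0$, so this requires genuine input. For this I would restrict $a$ to lie in $\ov k\subset F$, where $\sigma$ acts as the $q$-th power Frobenius. Then $\prod_{j\in S}\sigma^j(a)=a^{\sum_{j\in S}q^j}$, and since the integers $\sum_{j\in S}q^j$ for $S\subseteq\{0,1,\ldots,n-1\}$ are pairwise distinct (they are exactly the base-$q$ numbers with digits in $\{0,1\}$), the restriction $D|_{\ov k}$ is an ordinary polynomial in $a$ whose coefficient at $a^{\sum_{j=k}^{n-1}q^j}$ is the nonzero scalar $c_{S_0}$. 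Since $\ov k$ is infinite, this polynomial has only finitely many zeros, so we may choose $a_0\in\ov k$ with $D(a_0)\ne 0$; then $v_0+a_0u$ is the desired cyclic generator, completing the induction.
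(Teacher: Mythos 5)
Your proof is correct, but note that the paper itself offers no argument for Proposition \ref{T:Cyclic}: it is stated as a well-known fact (with the surrounding discussion pointing toward the structure of $R=F[\sigma]$ as a non-commutative principal ideal domain, or implicitly toward the Dieudonn\'e--Manin classification), so there is no proof in the paper to match yours against. Your route is the classical cyclic-vector-lemma induction adapted to the $\sigma$-linear setting, and it goes through: $W=Rv_0$ is $\Phi$-stable and, by the dimension count for the $\sigma$-semilinear bijection $\Phi$, satisfies $\Phi(W)=W$, so $V/W$ is again an isocrystal; the multilinear expansion of $D(a)$ is valid; and $c_{S_0}\neq 0$ holds because the matrix is block triangular, where for the two diagonal blocks you are implicitly using the standard (and easily verified) fact that for a cyclic $R$-module of dimension $m$ the first $m$ iterates of a generator form a basis -- worth one sentence, since the same fact is needed both for $v_0,\Phi v_0,\dots,\Phi^{k-1}v_0$ in $W$ and for $\ov u,\ov\Phi\ov u,\dots,\ov\Phi^{n-k-1}\ov u$ in $V/W$ before applying the $\sigma^k$-linear bijection $\Phi^k$. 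The genuinely nice point is your specialization $a\in\ov k$: since $\sigma$ acts there as $x\mapsto x^q$, the products $\prod_{j\in S}\sigma^j(a)=a^{\sum_{j\in S}q^j}$ have pairwise distinct exponents (base-$q$ digits in $\{0,1\}$, using $q\geq 2$), so no cancellation can kill the $c_{S_0}$ term and $D$ restricts to a nonzero one-variable polynomial with only finitely many zeros in the infinite field $\ov k$; this neatly avoids the cancellation problem one would face by taking $a$ in the fixed field $k((\pi))$. So your argument is a complete, elementary, self-contained proof of a statement the paper leaves to the literature, and it buys independence from both the PID/elementary-divisor theory of $R$ and the Dieudonn\'e--Manin classification.
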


In the context of Proposition \ref{T:Cyclic}, we call the generator $v$ a \textit{cyclic vector}.  The ring $R$
is non-commutative, but there exist both a right and left division algorithm, whence we may conclude that $R$ is
a principal ideal domain.  Upon choosing a cyclic vector, we may thus write $V \cong R/Rf$ for some $f =
\sigma^n + \cdots + a_{n-1}\sigma + a_n \in R$, where $n=\mbox{dim}_F(V)$.   We shall call $f$ the
\emph{characteristic polynomial} associated to the isocrystal $(V,\Phi)$. Note, however, that $f$ depends on the
choice of a cyclic vector.  Consequently, we shall be interested in the Newton polygon associated to $f$, an
isocrystal invariant that is independent of the choice of cyclic vector.

The Newton polygon of $f$, or equivalently the Newton polygon of $(V, \Phi)$, is defined to be the convex hull
of the set of points $\{(0,0),\ (i, -\val(a_i))\mid i = 1, 2, \dots, n\}$, where the $a_i$ are the
coefficients of $f$. More specifically, the Newton polygon of $f$ is the tightest-fitting convex polygon joining the points
$(0,0)$ and $(n,-\val(a_n))$ that passes either through or above all of the points in the set
$\{(0,0),(i, -\val(a_i))\}$.  The reader should observe that our definition
of the Newton polygon differs from the usual one, in which the polygon is formed from the set of points
$\{(0,0), (i,\val(a_i))\}$.  We adopt the less conventional construction in order that our definitions for the Newton stratification in Section \ref{S:strata} agree with those in other related contexts.  The associated Newton slope sequence is the $n$-tuple $\lambda = (\lambda_1, \dots, \lambda_n) \in \Q^n$, where
the $\lambda_i$ are the slopes of the edges of the Newton polygon, repeated with multiplicity and ordered such
that $\lambda_1 \geq \cdots \geq \lambda_n$.  Occasionally we will wish to move freely between a slope sequence $\lambda$ and the Newton polygon having $\lambda$ as its slope sequence, which we shall denote by $N_{\lambda}$.

\subsection{The characteristic polynomial for $GL_3(F)$}\label{S:charpoly}

If we fix a basis for the $n$-dimensional vector space $V$, the isocrystal $(V, \Phi)$ is isomorphic to one the form $(F^n, A\sigma)$ for some $A \in GL_n(F)$.  In this context, $\sigma(v)$ means that we apply $\sigma$ to each component of the vector $v$.

In this section, we specialize to 3-dimensional isocrystals.  In order that we may work with matrices in our calculations, we fix a basis.  Let $e_1, e_2, e_3$ denote the standard basis vectors for $F^3$.

\begin{prop}\label{T:Dneq0}
Let $\Phi =A\sigma$, where $A=\begin{pmatrix} a& b & c\\ d& e & f\\g& h& i\end{pmatrix} \in GL_3(F)$.  Then
$e_1$ is a cyclic vector for $(F^3, \Phi)$ if and only if
\begin{equation*}D := \sigma(d)\begin{vmatrix}d & e\\g&h\end{vmatrix}+ \sigma(g)\begin{vmatrix}d&f\\g&i\end{vmatrix}\neq 0.\end{equation*}
\end{prop}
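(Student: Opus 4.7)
The strategy is to unpack the definition of a cyclic vector and reduce the claim to a single determinant computation. Since $Re_1$ equals the $F$-span of $\{e_1, \Phi(e_1), \Phi^2(e_1), \ldots\}$ and $\dim_F V = 3$, the first step is to show that $Re_1 = V$ if and only if the three vectors $e_1, \Phi(e_1), \Phi^2(e_1)$ are $F$-linearly independent. One direction is immediate from a dimension count. For the converse I would observe that a nontrivial relation $\alpha_0 e_1 + \alpha_1 \Phi(e_1) + \alpha_2 \Phi^2(e_1) = 0$ can be pushed forward by $\Phi$; because $\Phi$ is $\sigma$-linear, this yields a relation of the same shape among $\Phi(e_1), \Phi^2(e_1), \Phi^3(e_1)$, and iterating shows that every $\Phi^n(e_1)$ stays in the span of the first three iterates, so $Re_1$ has $F$-dimension at most two.

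Next I would compute the three iterates explicitly. Since the entries of $e_1$ are $0$ and $1$, which are fixed by $\sigma$, one has $\Phi(e_1) = Ae_1 = (a, d, g)^\top$ and $\Phi^2(e_1) = A\sigma(Ae_1) = A(\sigma(a), \sigma(d), \sigma(g))^\top$. Arranging the three iterates as columns of a $3\times 3$ matrix $M$, the first column is $(1,0,0)^\top$, and expansion along that column collapses $\det M$ to the $2\times 2$ determinant of the lower-right block.

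The last step is to carry out this $2\times 2$ determinant. After multiplying through, the $\sigma(a)$ contributions cancel, and the remaining terms regroup by $\sigma(d)$ and $\sigma(g)$ into $\sigma(d)(dh - eg) + \sigma(g)(di - fg)$, which is exactly $D$. Combining with the first step then yields the biconditional: $e_1$ is a cyclic vector if and only if $\det M = D$ is nonzero. The argument is elementary and there is no real obstacle; the only point requiring care is the $\sigma$-linearity of $\Phi$, which is responsible for $\sigma(a), \sigma(d), \sigma(g)$ appearing in the entries of $\Phi^2(e_1)$ and hence in $D$.
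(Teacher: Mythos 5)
Your proof is correct and follows essentially the same route as the paper: the paper simply computes $e_1 \wedge \Phi(e_1) \wedge \Phi^2(e_1) = D(e_1\wedge e_2\wedge e_3)$, which is exactly your determinant of $M$, leaving the equivalence between cyclicity and independence of the three iterates implicit. Your extra step justifying that equivalence (pushing a dependence relation forward by the $\sigma$-linear map $\Phi$, using that $\sigma$ preserves nonvanishing of coefficients) is a sound and welcome elaboration, not a different argument.
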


\begin{proof}
Compute that $e_1 \wedge \Phi(e_1) \wedge \Phi^2(e_1) = D(e_1 \wedge e_2
\wedge e_3)$.
\end{proof}

We shall now assume that the hypotheses of Proposition \ref{T:Dneq0} are met so that $e_1$ is a cyclic vector for $(F^3, \Phi)$. The characteristic polynomial of $(F^3, \Phi)$ is of the form $f:= \sigma^3 +
\alpha\sigma^2 + \beta\sigma + \gamma = 0$, for some $\alpha, \beta, \gamma \in F$.  Using linear algebra, we
calculate that $\alpha$ and $\beta$ are determined by $\Phi$ as follows:
\begin{equation}\label{E:mateqn}
\begin{pmatrix}\alpha\\ \beta\end{pmatrix} = -\begin{pmatrix} \Phi^2(e_1)e_2 & \Phi(e_1)e_2\\ \Phi^2(e_1)e_3 & \Phi(e_1)e_3 \end{pmatrix}^{-1} \begin{pmatrix} \Phi^3(e_1)e_2 \\ \Phi^3(e_1)e_3 \end{pmatrix}.
\end{equation}
Here we denote by $\Phi^k(e_i)e_j$ the coefficient of $e_j$ in the vector $\Phi^k(e_i)$.  We can then solve matrix equation \eqref{E:mateqn} explicitly for $\alpha$ and $\beta$
to obtain the following:\begin{equation*}
\displaystyle \alpha = -\sigma^2(a) - \frac{1}{D}\left(
(\sigma^2(d)\sigma(e)+\sigma^2(g)\sigma(f))\begin{vmatrix} d&e\\g&h\end{vmatrix} +
(\sigma^2(d)\sigma(h)+\sigma^2(g)\sigma(i))\begin{vmatrix}d&f\\g&i\end{vmatrix}\right),
\end{equation*}
\begin{equation*}
\displaystyle \beta = -(\sigma(a)\alpha + \sigma^2(a)\sigma(a)+\sigma^2(d)\sigma(b)+\sigma^2(g)\sigma(c)) +
\frac{\sigma(D)}{D}\begin{vmatrix}e&f\\h&i\end{vmatrix}.
\end{equation*}
Here we observe directly the need for the hypothesis $D \neq 0$.  

So that formulas like the previous ones for $\alpha$ and $\beta$ appear less complicated, we define
$\overline{x} := \sigma(x)$.  In this notation, our formulae for $\alpha$ and $\beta$ then become \begin{equation}\label{E:alpha}
\displaystyle \alpha = -\overline{\overline{a}} - \frac{1}{D}\left(
(\ov{\ov{d}}\ov{e}+\ov{\ov{g}}\ov{f})\begin{vmatrix} d&e\\g&h\end{vmatrix} +
(\ov{\ov{d}}\ov{h}+\ov{\ov{g}}\ov{i})\begin{vmatrix}d&f\\g&i\end{vmatrix}\right),
\end{equation}
\begin{equation*}\label{E:beta1}
\displaystyle \beta = -(\ov{a}\alpha + \ov{\ov{a}}\ov{a}+\ov{\ov{d}}\ov{b}+\ov{\ov{g}}\ov{c}) +
\frac{\ov{D}}{D}\begin{vmatrix}e&f\\h&i\end{vmatrix}.
\end{equation*}
To make our calculations in Section
\ref{S:valcalc} less cumbersome, let us also introduce the following notation:
\begin{equation*}B_1:= \frac{\ov{a}}{D} \left( (\ov{\ov{d}}\ov{e}+\ov{\ov{g}}\ov{f})\begin{vmatrix} d&e\\g&h\end{vmatrix}\right) \hspace{20pt} B_2:= \frac{\ov{a}}{D}\left((\ov{\ov{d}}\ov{h}+\ov{\ov{g}}\ov{i})\begin{vmatrix}d&f\\g&i\end{vmatrix}\right)
\end{equation*}
so that
\begin{equation}\label{E:beta}
\beta = B_1 +B_2-\ov{\ov{d}}\ov{b}-\ov{\ov{g}}\ov{c} + \frac{\ov{D}}{D}\begin{vmatrix}e&f\\h&i\end{vmatrix}.
\end{equation}

Finally, recall from the proof of Proposition \ref{T:Dneq0} that $\Phi(e_1 \wedge \Phi(e_1) \wedge \Phi^2(e_1))
=\linebreak \ov{D}\Phi(e_1\wedge e_2 \wedge e_3) = \ov{D}\det A(e_1\wedge e_2\wedge e_3)$, where $\Phi=A\sigma$.  On the
other hand, using that $(\Phi^3+\alpha \Phi^2 + \beta \Phi + \gamma)(e_1) = 0$, we compute that $\Phi(e_1
\wedge \Phi(e_1) \wedge \Phi^2(e_1)) = -\gamma D(e_1 \wedge e_2 \wedge e_3)$. Equating these two expressions
yields
\begin{equation*}\gamma = -\frac{\ov{D}}{D}\det A. \end{equation*}  One should note that the method used to calculate $\gamma$ generalizes from $GL_3(F)$ to $GL_n(F)$.  We shall use
these explicit formulae for the coefficients of the characteristic polynomial to make calculations in Section
\ref{S:valcalc}.

\subsection{The Newton stratification}\label{S:strata}

Let $G = SL_3(F)$.  If $A \in G$, we have that $\val(\det A) = 0$ and thus $\gamma \in
\mathcal{O}^{\times}$. As discussed in Section \ref{S:cyclic}, the Newton polygon of $f$ is formed from the set
$\{ (0,0), (1, -\val(\alpha)), (2, -\val(\beta)), (3,0)\}$.  We define $\overline{\nu}(A)$ to be
the slope sequence of the Newton polygon associated to the isocrystal $(F^3, A\sigma)$.  Again, recall that our definition for $\ov{\nu}$ differs from the conventional one.  For example, if $\val(x) \leq \val(y) \leq \val(z)$, we have $\ov{\nu}(\text{diag}(x,y,z)) = (-\val(x), -\val(y), -\val(z))$.  The map
$$\overline{\nu}: G \longrightarrow \mathcal{N}(G)$$ induces a bijection  $B(G) \longleftrightarrow
\mathcal{N}(G)$, see \cite{KotIsoI}.  Here, $B(G)$ is the set of $\sigma$-conjugacy classes of $G$; \textit{i.e.},
$B(G) = G(F)/\sim$ where $x \sim y \iff x = gy\sigma(g)^{-1}$ for some $g \in G(F)$. We denote by $\mathcal{N}(G)$
the set of possible slope sequences arising from Newton polygons for isocrystals of the form $(F^3, A\sigma)$
with $A \in G$.

The map $\ov{\nu}$ induces a natural stratification on $G$ indexed by the elements of $\mathcal{N}(G)$.  We define the Newton strata referred to in the title of this paper to be \begin{equation*} G_{\lambda} := \{ g \in G \mid \ov{\nu}(g) = \lambda\}.\end{equation*} The group $G$ then breaks up into a disjoint union of these strata as follows: \begin{equation*} G = \coprod_{\lambda \in \mathcal{N}(G)} G_{\lambda}. \end{equation*}

The set $\mathcal{N}(G)$ is actually a partially ordered set.  We define $\lambda' \leq \lambda$ if $N_{\lambda'}$ and $N_{\lambda}$ have the same endpoints and all edges of $N_{\lambda'}$ lie on or below the corresponding edges of $N_{\lambda}$.
Given a particular $\lambda \in \mathcal{N}(G)$, we are interested in studying all strata $G_{\lambda '}$ such that $\lambda' \leq \lambda$. The closed subset of $G$ determined by a slope sequence $\lambda$ is \begin{equation*} G_{\leq \lambda}:= \coprod_{\lambda' \leq \lambda} G_{\lambda '}.\end{equation*}

\subsection{Lie-theoretic interpretation}\label{S:gpthy}

Let $B \subset G=SL_3(F)$ denote the Borel subgroup consisting of the upper triangular matrices, and $T$ the maximal torus consisting of all diagonal matrices. Let $W$ denote the Weyl group of $T$ in $G$, which is isomorphic to the
symmetric group $S_3$ in this case. Let $\mathfrak{a} := X_*(T)\otimes_{\Z}\R$, and denote its $\Q$-subspace by
$\mathfrak{a}_{\Q}:=X_*(T)\otimes_{\Z}\Q$.  Denote by $\alpha_i$ the simple roots in $\text{Lie}(G)$, and let $C
:= \{\lambda \in \mathfrak{a}\ \mid \langle \alpha_i, \lambda \rangle > 0, \ \forall i\}$ denote the dominant Weyl
chamber. Analogously, denote by $C^0:=\{\lambda \in \mathfrak{a}\ \mid \langle \alpha_i, \lambda \rangle < 0, \
\forall i\}$ the antidominant Weyl chamber.  Our convention will be to call the unique alcove in $C^0$ whose
closure contains the origin the base alcove $\mathbf{a}_1$.  (This convention makes our calculations in Section
\ref{S:valcalc} less cumbersome, and our less traditional definition of the Newton polygon in Section \ref{S:cyclic} aligns more naturally with this convention.) Let $I$ be the associated Iwahori subgroup of $G(F)$. According to our conventions, $I$ is the standard Iwahori subgroup $$I = \begin{pmatrix} \mathcal{O}^{\times} & \mathcal{O} & \mathcal{O} \\ P & \mathcal{O}^{\times} & \mathcal{O}\\ P & P & \mathcal{O}^{\times} \end{pmatrix}.$$  One can also
consider $\mathbf{a}_1$ to be the basepoint of the affine flag manifold $G/I$.

Denote by $\widetilde{W} = X_*(T)\rtimes W$ the affine Weyl group. We shall express an element $x\in
\widetilde{W}$ as $x = \pi^{\mu}w$, for $\mu \in X_*(T)$ and $w \in W$. For $G = GL_3(F)$, we may identify $X_*(T)$ with $\Z^3$.  We
then write $\pi^{\mu} = \text{diag}(\pi^{\mu_1},\pi^{\mu_2},\pi^{\mu_3})$ for $\mu = (\mu_1, \mu_2,
\mu_3) \in \Z^3$.  In this group-theoretic context, we can interpret a Newton slope sequence $\lambda =
(\lambda_1,\lambda_2, \lambda_3)$ as an element $\lambda \in \mathfrak{a}_{\Q,\text{dom}}$, where
$\mathfrak{a}_{\Q,\text{dom}}$ denotes the dominant elements in $\mathfrak{a}_{\Q}$.  Specifically, $\mathfrak{a}_{\Q,\text{dom}} = \{(\nu_1, \nu_2, \nu_3) \in \Q^3 \mid \nu_1\geq \nu_2 \geq \nu_3\}$.  For $G = SL_3(F)$, our description of the group of cocharacters is $X_*(T) \cong \{\mu \in \Z^3 \mid \sum \mu_i = 0\}$.  In this case, $\mathfrak{a}_{\Q,\text{dom}} = \{(\nu_1, \nu_2, \nu_3)
\in \Q^3 \mid \nu_1\geq \nu_2 \geq \nu_3\ \text{and}\ \nu_1+\nu_2+\nu_3=0\}$.  Under these identifications, the partial order on $\mathcal{N}(G)$ then becomes $\lambda' \leq \lambda \iff \lambda - \lambda'$ is a non-negative linear combination of positive coroots.

Recall the affine Bruhat decomposition for $G=SL_3(F)$: \begin{equation*} G =
\coprod_{x \in \widetilde{W}} IxI.\end{equation*}  We study the sets $G_{\leq
\lambda}$ when intersected with these double cosets $IxI$ in order that we may define a notion of codimension.  For a fixed $x \in \widetilde{W}$, we thus introduce
the following analog of the Newton strata discussed in Section \ref{S:strata}:
\begin{equation*} (IxI)_{\lambda}:= G_{\lambda} \cap IxI \end{equation*}
\begin{equation*}
(IxI)_{\leq \lambda}:= \coprod\limits_{\lambda' \leq \lambda} (IxI)_{\lambda'}. \end{equation*} The subset $(IxI)_{\leq \lambda}$ consists of all $g \in
IxI$ such that the Newton polygon associated to $(F^3, g\sigma)$ has the same endpoints as $N_{\lambda}$ and lies on or below $N_{\lambda}$.

The
stratum $(IxI)_{\lambda}$ is non-empty for only finitely many $\lambda \in \mathcal{N}(G)$.  It will be useful to introduce notation for the finitely many Newton slope sequences that actually arise for elements inside a particular Iwahori double coset: \begin{equation*} \mathcal{N}(G)_x := \{ \lambda \in \mathcal{N}(G) \mid (IxI)_{\lambda} \neq \emptyset \}. \end{equation*} The subset $\mathcal{N}(G)_x$ inherits the partial ordering $\leq$ on $\mathcal{N}(G)$.

\subsection{Admissibility of $(IxI)_{\lambda}$}\label{S:admis}

The double cosets $IxI$ are not finite dimensional; however, we can develop an adequate notion of
codimension by working with finite dimensional quotients of $IxI$ such as $IxI/I^N$, where $I^N:= \{ g \in I \mid g \equiv \text{id}\mod(P^N)\}$.  We obtain another finite dimensional quotient of $IxI$ by considering its image under the map on $3 \times 3$ matrices induced by $F \rightarrow F/P^N$.  By abuse of notation, we denote this image by $IxI/P^N$.

Following \cite{GKM}, denote by $p_N$ and $\rho_N$ the surjections $p_N:IxI \twoheadrightarrow IxI/I^N$ and $\rho_N:IxI \twoheadrightarrow IxI/P^N$.  Observe that the quotients $IxI/I^N$ and $IxI/P^N$ are finite dimensional affine schemes.  We say that a subset $Y$ of $IxI$ is \emph{admissible} if there exists an
integer $N$ such that $Y = p_N^{-1}p_NY$.  Note that $Y$ is admissible if and only if there exists an integer $M$ such that $Y = \rho_M^{-1}\rho_MY$.  Since these two notions of admissibility are equivalent, we will use whichever is most convenient for us in the given context.

If a subset $Y$ of $IxI$ is admissible, we can treat $Y$ as though it is
finite-dimensional. In particular, we define the \emph{codimension} of $Y$ in $IxI$ to
be the codimension of $p_NY$ in $IxI/I^N$ for any $N$ such that $Y =
p_N^{-1}p_NY$.  Similarly, we say that $Y$ is \emph{irreducible} (resp. open, closed, locally closed) in $IxI$ if $p_NY$ is irreducible (resp. open, closed, locally closed) for some $N$ such that $Y = p_N^{-1}p_NY$.  Note that we may replace $p_N$ by $\rho_N$ and $I^N$ by $P^N$ to obtain equivalent formulations of these topological notions using the image of $IxI$ under the map $F \rightarrow F/P^N$.  We will see that $(IxI)_{\lambda}$ is an admissible subset of $IxI$ for any $\lambda \in \mathcal{N}(G)_x$.  We remark that Vasiu has demonstrated the admissibility of latticed $F$-isocrystals, which are isocrystals over the field of fractions of the Witt vectors satisfying an additional property \cite{Vas}.  In addition, we will see that the sets $(IxI)_{\lambda}$ are locally closed in $IxI$ and that $(IxI)_{\leq \lambda}$ are precisely the closures of the $(IxI)_{\lambda}$ inside $IxI$.  It is not necessarily true in general that $(IxI)_{\leq \lambda}$ is an irreducible subset of $IxI$.  We show, however, that all of the irreducible components have the same codimension inside $IxI$.  We make more detailed remarks of this nature in Section \ref{S:thmproof}.

\subsection{The generic Newton slope sequence}\label{S:generic}

By observing that $IxI/I^N$ is irreducible for any positive integer $N$, we see that the double coset $IxI$ is irreducible for fixed $x \in \widetilde{W}$.  Furthermore, $IxI$ is the finite union of subsets of the form $(IxI)_{\lambda}$, any two of which are disjoint.  If $\lambda \in \mathcal{N}(G)_x$ is maximal, then $(IxI)_{\lambda}$ is actually an open subset of $IxI$.  Since $IxI$ is irreducible, there must exist a unique maximal element in $\mathcal{N}(G)_x$.

\begin{defn}
Given $x \in \widetilde{W}$, we define the generic Newton slope sequence $\nu_x \in \mathfrak{a}_{\Q, \text{dom}}$
to be the unique maximal element in $\mathcal{N}(G)_x$; \textit{i.e.},  $\nu_x$ is defined such that for all $\lambda \in \mathcal{N}(G)_x$, we have $\lambda \leq \nu_x$.
\end{defn}

For a given $x=\pi^{\mu}w$, note that $\nu_x$ may not coincide with the unique dominant element in the $W$-orbit
of $-\mu$, which we denote by $-\mu_{\text{dom}}$.  In general, $\nu_x \leq -\mu_{\text{dom}}$, with strict
inequality occurring for some $x$ such that $\mathbf{a}_x$ lies outside the dominant Weyl chamber.  In Section \ref{S:slopes} we provide explicit descriptions for the maximal and minimal elements in $\mathcal{N}(G)_x$ for all $x\in \widetilde{W}$. It would be nice to have a closed formula providing both the maximal element $\nu_x$ and the minimal element in $\mathcal{N}(G)_x$, even in the case of $G=SL_3(F)$. \begin{question}\label{T:nuxform} Is there a closed, root theoretic formula for the maximal and minimal elements in $\mathcal{N}(G)_x$ for $G = SL_3(F)$? For all $x \in \widetilde{W}$ and any $G$?\end{question}

\subsection{Length of a segment $[\mu, \lambda]$}\label{S:length}

The codimensions of the Newton strata inside $IxI$ are more conveniently expressed in terms of the length of a segment in the poset $\mathcal{N}(G)_x$, which we now introduce.  For $G=SL_3(F)$, the poset $\mathcal{N}(G)$ consists of a single connected component, which is a lattice.

Given $x \in \widetilde{W}$ and two slope sequences $\mu, \lambda \in
\mathcal{N}(G)_x$ such that $\mu \leq \lambda$, we may consider the segment $[\mu, \lambda]$ defined as follows:
\begin{equation*} [\mu,\lambda] := \{ \nu \in \mathcal{N}(G)_x \mid\ \mu \leq \nu \leq \lambda \}.
\end{equation*}  We define the length of the segment $[\mu, \lambda]$ inside $\mathcal{N}(G)_x$, denoted $\length_{\mathcal{N}(G)_x}[\mu,\lambda]$, to be the supremum of all natural numbers $n$ such that there exists a chain $\mu = \nu_0 < \nu_1 < \cdots < \nu_n = \lambda$ in the poset $\mathcal{N}(G)_x$.  Our definition of length is the same as Chai's notion of length on subsets of Newton points expected to appear in the
reduction modulo $p$ of a Shimura variety, see \cite{Ch}.

Similar to the situation in \cite{Ch}, it turns out that the poset $\mathcal{N}(G)_x$ is ranked or catenary; \textit{i.e.}, any two maximal chains have the same length.   We shall also see in Section \ref{S:slopes} that $\mathcal{N}(G)_x$ is a lattice.  We might reasonably expect that $\mathcal{N}(G)_x$ is always a ranked lattice. \begin{question}\label{T:rklattice} Is the poset $\mathcal{N}(G)_x$ ranked for all $G$?  Is $\mathcal{N}(G)_x$ a lattice for all $G$? \end{question}

\subsection{Problem statement}\label{S:thm}

We are now prepared to formally state the main theorem, which provides a formula for the codimension of the subset $(IxI)_{\leq \lambda}$ inside $IxI$.
\pagebreak
\begin{theorem}\label{T:main}
Let $G=SL_3(F)$ and fix $x \in \widetilde{W}$.  For a Newton slope sequence $\lambda \in \mathcal{N}(G)_x$, the
subset $(IxI)_{\leq \lambda}$ of $IxI$ is admissible, and \begin{equation*}\label{E:codim} \codim\left(
(IxI)_{\leq \lambda} \subseteq  IxI \right) =\length_{\mathcal{N}(G)_x}[\lambda, \nu_x].\end{equation*} Moreover, the closure of a given Newton stratum $(IxI)_{\lambda}$ in $IxI$ is precisely $(IxI)_{\leq \lambda}$.  Therefore, for two Newton polygons $\lambda_1< \lambda_2$ which are adjacent in the poset $\mathcal{N}(G)_x$, \begin{equation*} \codim\left( (IxI)_{\leq \lambda_1} \subset (IxI)_{\leq \lambda_2} \right) = 1. \end{equation*}
\end{theorem}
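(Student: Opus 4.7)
The plan is to translate the Newton polygon condition into explicit valuation inequalities on the characteristic polynomial coefficients $\alpha, \beta, \gamma$ from Section \ref{S:charpoly}, to parametrize $IxI$ via the Iwahori coordinates of $i_1, i_2$ in a factorization $g = i_1 x i_2$, and then to carry out a codimension count on the resulting finite dimensional affine scheme. The basic translation is the following: for $\lambda = (\lambda_1, \lambda_2, \lambda_3) \in \mathcal{N}(G)_x$ and $g \in IxI$ with characteristic polynomial $\sigma^3 + \alpha \sigma^2 + \beta \sigma + \gamma$, the Newton polygon $N_{\ov{\nu}(g)}$ lies on or below $N_\lambda$ exactly when both of the points $(1, -\val(\alpha(g)))$ and $(2, -\val(\beta(g)))$ lie on or below $N_\lambda$, i.e., when $\val(\alpha(g)) \geq -\lambda_1$ and $\val(\beta(g)) \geq \lambda_3$. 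The endpoint condition $\val(\gamma) = 0$ is automatic on $SL_3$. Thus $(IxI)_{\leq \lambda}$ is cut out inside $IxI$ by two valuation inequalities on the explicit expressions \eqref{E:alpha} and \eqref{E:beta}.

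Writing $\alpha$ and $\beta$ in terms of the Iwahori coordinates and truncating modulo a sufficiently large power of $P$, these valuation inequalities become finitely many polynomial equations in the coordinates of $IxI/P^N$. This simultaneously proves admissibility and realizes $(IxI)_{\leq \lambda}$ as a closed subvariety of a finite dimensional quotient. The closure statement then follows from the codimension formula: $(IxI)_{\leq \lambda}$ is closed, and its proper closed subset $\bigcup_{\lambda' < \lambda} (IxI)_{\leq \lambda'}$ has strictly larger codimension in $IxI$, so the open complement $(IxI)_\lambda$ inside $(IxI)_{\leq \lambda}$ is dense. The codimension $1$ assertion for adjacent polygons follows since adjacency is equivalent to $\length_{\mathcal{N}(G)_x}[\lambda_1, \lambda_2] = 1$.

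The technical heart of the theorem is the codimension calculation itself. Fixing $x = \pi^\mu w$, I would expand $\alpha$ and $\beta$ as polynomials in the Iwahori coordinates using \eqref{E:alpha} and \eqref{E:beta}: each summand has a valuation determined by $\mu$ and $w$, and isolating the dominant (lowest valuation) monomials computes the generic valuations $\val(\alpha) = -\nu_{x,1}$ and $\val(\beta) = \nu_{x,3}$, hence determines $\nu_x$. For a given $\lambda \leq \nu_x$ in $\mathcal{N}(G)_x$, imposing $\val(\alpha) \geq -\lambda_1$ forces the vanishing of the coefficients of $\pi^{-\nu_{x,1}}, \ldots, \pi^{-\lambda_1 - 1}$ in $\alpha$, each eliminating a distinct Iwahori coordinate and contributing one unit of codimension; similarly for the $\beta$ condition. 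Summing these independent constraints and matching against the poset structure of $\mathcal{N}(G)_x$, which is determined explicitly in Section \ref{S:slopes}, yields the length formula.

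The main obstacle lies in this matching step. Which monomials dominate in $\alpha$ and $\beta$ depends sensitively on the Weyl part $w$ and the translation $\mu$, forcing a case-by-case organization across the affine Weyl group. Moreover, in the ``anomalous'' regions exemplified by $x = \pi^{(-2,0,2)} s_{121}$ in the introduction, algebraic dependencies among the leading terms cause the $\alpha$ and $\beta$ vanishing conditions to collapse: imposing $\val(\alpha) \geq -\lambda_1$ may automatically force $\val(\beta) \geq \lambda_3$ for a $\lambda_3$ below what is naively required, merging several expected Newton strata. In those cases the number of genuinely independent polynomial constraints falls short of the naive coroot count and matches $\length_{\mathcal{N}(G)_x}[\lambda, \nu_x]$ rather than the longer length in $\mathcal{N}(G)$. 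Tracking these dependencies across all $x$ is the reason the valuation and codimension calculations must be split into Sections \ref{S:valcalc} and \ref{S:codimcalc}.
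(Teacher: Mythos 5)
Your outline follows the paper's own route — translate $\ov{\nu}(g)\leq\lambda$ into $\alpha\in P^{-\lambda_1}$, $\beta\in P^{\lambda_3}$ (this is Lemma \ref{T:charpolypo}), analyze valuations case by case in $x$, and match against the explicitly computed poset $\mathcal{N}(G)_x$ — but two steps you treat as routine are genuine gaps. First, the formulas \eqref{E:alpha} and \eqref{E:beta} exist only after a cyclic vector has been chosen, and the condition $D\neq 0$ fails on a non-negligible locus of the coset in several cases (e.g.\ the locus $d=0$ for $w=s_{121}$, or $g=0$ for $w=s_1$); your proposal never says how the Newton conditions are to be imposed there, yet those points belong to $(IxI)_{\leq\lambda}$ and affect both the codimension and the closure claims. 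The paper handles this locus separately by splitting the isocrystal into a line and a $2$-dimensional piece (Lemma \ref{T:split}, Lemma \ref{T:GL_2}) and checking that the resulting description agrees with the generic one. Relatedly, since $\alpha$ and $\beta$ involve $1/D$ and $\val(D)$ is not constant on the coset in those cases, it is not immediate that your two valuation inequalities depend on only finitely many coefficients; admissibility comes out only after the case-by-case reduction to polynomial conditions such as $a\in P^{-\lambda_1}$ and $ae-bd\in P^{\lambda_3}$, with Lemma \ref{T:irrelterms} used to discard the irrelevant summands.

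Second, the codimension count as you state it — each forced-to-vanish coefficient of $\alpha$ or $\beta$ ``eliminating a distinct Iwahori coordinate'' — is not valid: the surviving conditions are polynomial (a $2\times 2$ minor, or $\ov{d}b+\ov{g}c$), and imposing $k$ such coefficient equations only bounds the codimension above by $k$. The paper gets equality by exhibiting $(xI)_{\leq\lambda}$ as a fiber bundle over an irreducible base: when the relevant entry has fixed valuation (e.g.\ $d\in P^{\mu_2}_{\times}$ in case IA) the fibers are cosets of $P^{\lambda_3-\mu_2}$, but when its valuation is unbounded (case IIIA) the fibers are fibers of the multiplication map of Lemma \ref{T:mult}, which are reducible, and one must verify that all components have the same codimension. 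That equicodimensionality is also exactly what your closure argument is missing: from ``$(IxI)_{\leq\lambda}$ is closed and $\bigcup_{\lambda'<\lambda}(IxI)_{\leq\lambda'}$ has strictly larger codimension'' you may conclude that $(IxI)_{\lambda}$ is dense only if every irreducible component of $(IxI)_{\leq\lambda}$ has the stated codimension; since these sets are genuinely reducible for some $x$, the density must be checked component by component (the decomposition into the pieces $X^j_{\leq\lambda}$ in Section \ref{S:thmproof}), not deduced from the global codimension formula alone. Finally, note the paper first reduces from $IxI$ to the single coset $xI$ by the $\sigma$-conjugation isomorphism of Lemma \ref{T:coset}; your parametrization $g=i_1xi_2$ is workable in principle but redundant, and some such reduction (plus the symmetries of Lemma \ref{T:reduction}) is what keeps the case analysis finite in practice.
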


If we interpret Theorem \ref{T:main} root-theoretically, we can produce an alternative formula for the codimensions of the
Newton strata inside $IxI$.  Order the simple roots $\alpha_1, \alpha_2 \in X_*(T)$ in the usual way so that $\alpha_i = e_i-e_{i+1}$. Let $\omega_1=(1,0,0)$ and $\omega_2=(1,1,0)$.  For $s\in W$ denote by $s(C^0)$ the Weyl chamber
corresponding to $s$.  As a corollary to Theorem \ref{T:main} we have the following explicit formulae.

\begin{cor}\label{T:maincor} Let $G = SL_3(F)$, and fix $x \in \widetilde{W}$ and $\lambda \in \mathcal{N}(G)_x$.
\begin{enumerate}
\item  If $x = \pi^{(\mu_1,\mu_2,\mu_3)}s_1s_2s_1$ where $\mu_1+2 <\mu_2 +1 < \mu_3$, or if $x=\pi^{(-2n,n,n)}s_1s_2,\linebreak  \pi^{-(n,n,-2n)}s_2s_1,\  \pi^{(-2n+1,n-1,n)}s_2,$ or $\pi^{-(n,n-1,-2n+1)}s_1$ for some $n \in \N$, then
  \begin{equation}\label{E:rootform1}
     \codim\left( (IxI)_{\leq \lambda} \subseteq  IxI\right)= \left(\sum\limits_{i=1}^2 \lceil \langle \omega_i, \nu_x-\lambda\rangle\rceil\right)-1.
  \end{equation}
\item For all $x \in \widetilde{W}$ not of the form $x',\ \varphi(x'),$ or $\varphi^2(x')$ for $x'$ one of the values listed above, where $\varphi(x')$ rotates the alcove $\mathbf{a}_{x'}$ 120 degrees counterclockwise about the center of the base alcove, we have
  \begin{equation}\label{E:rootform2}
  \codim\left( (IxI)_{\leq \lambda} \subseteq  IxI\right)=
 \sum\limits_{i=1}^2 \lceil \langle \omega_i,\nu_x- \lambda\rangle\rceil.
  \end{equation}
\end{enumerate}
Here, $\lceil \ell \rceil$ denotes the ceiling function, which rounds up to the nearest integer.  Note that the formula in \eqref{E:rootform1} only makes sense for $\lambda \neq \nu_x$.
\end{cor}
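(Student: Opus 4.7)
The plan is to deduce the corollary directly from Theorem \ref{T:main} by translating the segment length $\length_{\mathcal{N}(G)_x}[\lambda, \nu_x]$ into the root-theoretic expression on the right-hand side. The key observation is that, with the sign convention in Section \ref{S:cyclic}, the pairings $\langle \omega_1,\mu\rangle = \mu_1$ and $\langle \omega_2,\mu\rangle = \mu_1+\mu_2$ compute precisely the $y$-coordinates of the two interior vertices of the Newton polygon $N_\mu$. Hence $\langle \omega_i,\nu_x-\lambda\rangle$ measures the vertical gap between $N_\lambda$ and $N_{\nu_x}$ at the $i$-th interior vertex, and $\sum_{i=1}^{2}\lceil\langle\omega_i,\nu_x-\lambda\rangle\rceil$ counts the total number of integer unit heights separating the two polygons.

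For case (2), I would use the explicit description of $\mathcal{N}(G)_x$ obtained in Section \ref{S:slopes} to verify that, for these ``generic'' $x$, the poset coincides with the full sublattice of $\mathcal{N}(G)$ of slope sequences lying between the minimal element of $\mathcal{N}(G)_x$ and $\nu_x$. Each covering relation in that sublattice corresponds to dropping one of the two Newton polygon heights by a single integer unit (a move by one positive coroot), and since the poset is ranked and catenary by the discussion in Section \ref{S:length}, the length of any maximal chain from $\lambda$ to $\nu_x$ is obtained by summing the heights that must be dropped. Because the individual entries of the slope sequences need not be integral while the vertex heights are, rounding up produces the ceilings in \eqref{E:rootform2}. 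Combined with Theorem \ref{T:main}, this gives the desired codimension formula.

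For case (1), the strategy is to identify, using Section \ref{S:slopes} again, that the listed $x$ are precisely those for which $\mathcal{N}(G)_x$ is a proper sublattice of the expected lattice of slope sequences below $\nu_x$. The prototype is the example in Figure \ref{fig:poset}: one Newton slope sequence lying strictly between $\lambda$ and $\nu_x$ in $\mathcal{N}(G)$ fails to be realized by any element of $IxI$, so it is absent from $\mathcal{N}(G)_x$. Consequently every maximal chain from $\lambda$ to $\nu_x$ inside $\mathcal{N}(G)_x$ is exactly one step shorter than the corresponding chain in the full lattice, producing the $-1$ correction in \eqref{E:rootform1}. The rotational triple $x',\varphi(x'),\varphi^2(x')$ arises because rotation by 120 degrees about the base alcove is an order-3 symmetry of the combinatorial setup that preserves the realizability of Newton polygons; each defective configuration is thus forced to appear in such an orbit.

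The main obstacle is showing that the defective list in (1) is \emph{exhaustive}, that is, that for every other $x$ the poset $\mathcal{N}(G)_x$ is the full expected sublattice. This requires the explicit computation of $\mathcal{N}(G)_x$ for all Weyl group shapes and translation parts carried out in Section \ref{S:slopes}, together with checking in each case that either no intermediate slope sequence is missing (case (2)) or exactly one is missing (case (1)). Once that classification is in hand, the corollary is a purely combinatorial restatement of Theorem \ref{T:main}, and the observation that $\lambda=\nu_x$ makes \eqref{E:rootform1} ill-defined explains the side remark at the end of the statement.
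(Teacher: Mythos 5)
Your overall route --- deduce the corollary from Theorem \ref{T:main} by recomputing $\length_{\mathcal{N}(G)_x}[\lambda,\nu_x]$ combinatorially from the explicit posets of Section \ref{S:slopes} --- is viable in outline, and your identification of $\langle\omega_1,\lambda\rangle=\lambda_1$, $\langle\omega_2,\lambda\rangle=\lambda_1+\lambda_2$ with the interior vertex heights of $N_\lambda$ is correct. (The paper itself proceeds differently: the root-theoretic expressions are read off directly during the codimension computations in Section \ref{S:codimcalc}, where the codimension is obtained as a sum of explicit valuation conditions such as $\lceil\nu_1-\lambda_1\rceil+\lceil\lambda_3-\nu_3\rceil$, and the exceptional cases are flagged there, e.g.\ case IA with $\mu_2=\mu_3$.) But your classification step contains a genuine error. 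You attribute the $-1$ in part (1) entirely to the phenomenon of Figure \ref{fig:poset}, claiming the listed $x$ are ``precisely those for which $\mathcal{N}(G)_x$ is a proper sublattice of the expected lattice'' and that every maximal chain is one step shorter because an intermediate slope sequence fails to be realized. That is true only for the family $x=\pi^{\mu}s_1s_2s_1$ with $\mu_1+2<\mu_2+1<\mu_3$ (and even there the number of unrealized elements is two in the example $(0,0,0)<(1,0,-1)$, not one; only the drop in chain length is one). For the other four families, $x=\pi^{(-2n,n,n)}s_1s_2$, $\pi^{-(n,n,-2n)}s_2s_1$, $\pi^{(-2n+1,n-1,n)}s_2$, $\pi^{-(n,n-1,-2n+1)}s_1$, nothing is missing: by Equations \eqref{E:N(G)123}, \eqref{E:N(G)132} and Propositions \ref{T:12vals}, \ref{T:23vals}, the poset $\mathcal{N}(G)_x$ there is the full down-set (or full interval) below $\nu_x$. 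The $-1$ in those cases comes from the fact that $\nu_x$ is half-integral, so the ceilings in $\sum_i\lceil\langle\omega_i,\nu_x-\lambda\rangle\rceil$ overcount the chain length by one: for instance for $x=\pi^{(-2,1,1)}s_1s_2$ one has $\nu_x=(1,-\tfrac12,-\tfrac12)$, the only element below it in $\mathcal{N}(G)$ is $(0,0,0)$, so the segment has length $1$, while the sum of ceilings is $\lceil 1\rceil+\lceil\tfrac12\rceil=2$.

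Because your dichotomy (``no element missing $\Rightarrow$ formula \eqref{E:rootform2}; one element missing $\Rightarrow$ formula \eqref{E:rootform1}'') is the wrong criterion, the plan as written would both fail to establish \eqref{E:rootform1} for the four half-integral families and give an unsound exhaustiveness check for part (2): the correct check must distinguish two separate sources of the correction, namely (a) posets with unrealized intermediate points ($w=s_1s_2s_1$, non-boundary $\mu$) and (b) alcoves with half-integral generic slope $\nu_x$, where the ceiling itself is the culprit even though $\mathcal{N}(G)_x$ is complete, and must also verify that in all remaining cases (including those where only the \emph{minimal} element of $\mathcal{N}(G)_x$ is half-integral, which truncates the poset from below without affecting $[\lambda,\nu_x]$) the sum of ceilings really does equal the segment length. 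With that corrected case analysis, your combinatorial derivation from Theorem \ref{T:main} and the tables would go through.
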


Equation \eqref{E:rootform2} is the naive analog of Chai's root-theoretic formula for the length of posets of
Newton slope sequences associated to $G(F)$ for $F$ a $p$-adic field, which we now recall for comparison:
\begin{theorem}[Chai]\label{T:chai} Let $F$ be non-Archimedean local field, and let $C^{\nu}_{F,R^{\vee}}$
denote the poset of Newton slope sequences that lie below $\nu$ which occur for $G(F)$, where $G$ is connected,
reductive, quasisplit over $F$.  Denote by $\omega_{F,i}$ the fundamental $F$-weights, and let $\lambda$ be a
slope sequence lying below $\nu$. Then \begin{equation*}\label{E:chailength}
\length_{C^{\nu}_{F,R^{\vee}}}[\lambda,\nu]=\sum^n_{i=1}\lceil \langle \omega_{F,i}, \nu-\lambda\rangle \rceil
\end{equation*} \end{theorem}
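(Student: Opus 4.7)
The plan is to prove both inequalities for the length by analyzing covering relations in the poset $C^{\nu}_{F,R^{\vee}}$ and applying a subadditivity argument to the ceiling function.

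First, I would characterize the covers. A covering relation $\mu \lessdot \mu'$ in $C^{\nu}_{F,R^{\vee}}$ should correspond, via the partial order on Newton polygons, to a minimal admissible upward perturbation of the polygon $N_\mu$ preserving convexity, the fixed endpoints, and rationality of slopes. The crucial claim is that every such cover satisfies
\[
\sum_{i=1}^n \Bigl\lceil \langle \omega_{F,i}, \mu'-\mu\rangle\Bigr\rceil = 1,
\]
i.e., exactly one fundamental $F$-weight pairing picks up a positive (possibly fractional) value whose ceiling is $1$, while the remaining pairings have ceiling zero. I would establish this by a geometric case analysis: a minimal move either lifts a single break point of the polygon by a unit in one fundamental coweight direction, or rearranges the polygon on a maximal plateau of equal rational slopes, and in both cases the contribution is concentrated at a single index $i$.

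Once the claim on covers is proved, the upper bound on length follows from subadditivity. For any saturated chain $\lambda=\mu_0 \lessdot \mu_1 \lessdot \cdots \lessdot \mu_k = \nu$, applying $\lceil a \rceil + \lceil b \rceil \geq \lceil a+b \rceil$ and telescoping yields
\[
k \;=\; \sum_{j=1}^k \sum_{i=1}^n \Bigl\lceil \langle \omega_{F,i},\mu_j-\mu_{j-1}\rangle\Bigr\rceil \;\geq\; \sum_{i=1}^n \Bigl\lceil \langle \omega_{F,i}, \nu-\lambda\rangle\Bigr\rceil,
\]
so $\length_{C^{\nu}_{F,R^{\vee}}}[\lambda,\nu] \leq \sum_i \lceil \langle \omega_{F,i}, \nu-\lambda\rangle\rceil$. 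For the reverse inequality, I would construct an explicit saturated chain of that length by greedy induction: at each stage, choose an index $i$ with $\langle \omega_{F,i}, \nu-\mu \rangle > 0$ and use the classification of covers to produce $\mu \lessdot \mu' \leq \nu$ whose only nonzero contribution is at $i$. Initializing with a first move that absorbs the fractional part of some $\langle \omega_{F,i}, \nu-\lambda\rangle$ ensures the remaining $\sum_i \lceil \langle \omega_{F,i}, \nu-\lambda\rangle\rceil - 1$ covers each contribute exactly $1$, and the chain terminates at $\nu$.

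The main obstacle is the classification of covers. The slope sequences in $C^{\nu}_{F,R^{\vee}}$ are rational but generally non-integral, so covering relations need not correspond simply to adding a single lattice box under the polygon; handling rational slopes, plateaus, and Galois-stable pieces simultaneously demands a delicate combinatorial analysis, together with a verification that the greedy construction always has a valid next cover available inside $[\lambda,\nu]$. In the general quasisplit case there is the additional subtlety that the $F$-root datum arises by Galois averaging, so the $\omega_{F,i}$ are sums of Galois orbits of geometric fundamental weights; a descent step is needed to translate the geometric description of covers into the $F$-rational setting before the ceiling identity above can be applied.
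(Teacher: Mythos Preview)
The paper does not prove this theorem at all: it is quoted from Chai's paper \cite{Ch} purely for comparison with Corollary~\ref{T:maincor}, and no argument is given or sketched. There is therefore no proof in the paper against which to compare your proposal.

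That said, your outline has a genuine gap, and it is in the direction of the inequalities. From subadditivity of the ceiling you correctly derive that every saturated chain $\lambda=\mu_0\lessdot\cdots\lessdot\mu_k=\nu$ satisfies $k\ge\sum_i\lceil\langle\omega_{F,i},\nu-\lambda\rangle\rceil$, but you then write that this yields $\length\le\sum_i\lceil\cdots\rceil$. That is backwards: since length is a supremum over chains, the inequality $k\ge\text{RHS}$ for every saturated chain gives the \emph{lower} bound $\length\ge\text{RHS}$. Your subsequent greedy construction of an explicit chain of length exactly $\text{RHS}$ then reproves the same lower bound. Neither step bounds the length from above.

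To obtain the upper bound you must show that for every cover $\mu\lessdot\mu'$ the quantity $\sum_i\lceil\langle\omega_{F,i},\nu-\mu\rangle\rceil-\sum_i\lceil\langle\omega_{F,i},\nu-\mu'\rangle\rceil$ is at least $1$, so that the proposed rank function strictly decreases along every step of every chain. This is \emph{not} the same as your cover claim $\sum_i\lceil\langle\omega_{F,i},\mu'-\mu\rangle\rceil=1$, because ceilings are not additive: one can have $\lceil\langle\omega_i,\nu-\mu\rangle\rceil=\lceil\langle\omega_i,\nu-\mu'\rangle\rceil$ even when $\langle\omega_i,\mu'-\mu\rangle>0$. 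Controlling this is where the real work in Chai's argument lies, and your proposal does not address it.
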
 \noindent The statement of this theorem in \cite{Ch} is for $F$ a $p$-adic field,
since Chai is primarily interested in applications to Shimura varieties, although he remarks that the theorem is
true even when $F$ has positive characteristic.  A similar expression also arises as the formula for the
codimension of the Newton strata in the adjoint quotient of a reductive group, $\mathbb{A}(F)_{\leq \lambda}$ in
$\mathbb{A}(F)_{\leq \nu_x}$, appearing in \cite{KotNewtStrata}.

As indicated by Equation \eqref{E:rootform1}, for certain values of $x$, we require a correction term of -1 to the initial guess for the codimensions of the Newton strata, which incorrectly assumes that the length of the segment $[\lambda,\nu]$ inside the poset associated to a particular affine Weyl group element, $\mathcal{N}(G)_x$, coincides with the length in the larger poset $\mathcal{N}(G)$.  The affine Weyl group elements for which this correction term appears correspond either to ones whose poset $\mathcal{N}(G)_x$ is missing expected elements, as in our example from Figure \ref{fig:poset}, or to alcoves having half-integral generic Newton slopes, in which case rounding up to the nearest integer yields an overestimate for the codimension.  There are advantages to both the combinatorial and root-theoretic presentations for the codimension formula.  When expressed in terms of the length of the segment $[\lambda, \nu_x]$, the formula is independent of the affine Weyl group element $x$ in consideration.  On the other hand, the explicit root-theoretic version has a natural graphical interpretation, since it depicts, in some sense, the distance between the Newton polygons $N_{\lambda}$ and $N_{\nu_x}$.

\subsection{Affine Deligne-Lusztig varieties for $A_2$}\label{S:adlv}
Theorem \ref{T:main} is related to the study of certain affine Deligne-Lusztig varieties.  Let $G= SL_3(F)$ and $b \in G(F)$.  Recall the definition of the affine Deligne-Lusztig variety $X_x(b)$ inside the affine flag manifold: \begin{equation*}
 X_x(b) := \{ g \in G(F)/I : g^{-1}b\sigma(g) \in IxI\}.
\end{equation*} Little is known about the varieties $X_x(b)$, including, in most cases, whether or not they are empty as sets.  In \cite{Reu}, Reuman provides a simple criterion for determining non-emptiness of the affine Deligne-Lusztig
varieties inside the affine flag manifold for $G=SL_3(F)$ and $b=1$, and alternative methods are discussed in
\cite{GHKR}.  It is worth noting that the methods used in this
paper will provide another means by which we can determine for which $x$ the variety $X_x(1)$ is non-empty.  More
specifically, 0 is the minimal element in $\mathcal{N}(G)_x$ if and only if $X_x(1) \neq \emptyset$.

Denote by $\lambda$ the element $\ov{\nu}(b) \in \mathcal{N}(G)$.  Recall from Section \ref{S:gpthy} that
$(IxI)_{\lambda} = \linebreak IxI \cap \{ gb\sigma(g)^{-1} \mid g\in G\}$, so that $X_x(b) \neq \emptyset$ if and only if $(IxI)_{\lambda}
\neq \emptyset$.  One application of the calculations in Section \ref{S:slopes}, in which we explicitly describe the poset $\mathcal{N}(G)_x = \{\lambda \in \mathcal{N}(G) \mid (IxI)_{\lambda} \neq \emptyset \}$, is to determine for which $b \in G$ we have $X_x(b) \neq \emptyset$.  Our approach differs from Reuman's method and answers the non-emptiness question for any $b \in G$, rather than only $b=1$, in the case of $A_2$.  

Although our treatment of $SL_3(F)$ in Section \ref{S:valcalc} suggests that the number of cases becomes unmanageable as the rank of $G$ increases, the author believes that it might be possible to employ arguments similar in flavor to provide a complete answer to the question of non-emptiness in the case of $A_n$.  The reader will observe in Section \ref{S:slopes} that the examples constructed to prove non-emptiness all lie in $k((\pi))$, rather than $F=\ov{k}((\pi))$.  In this case, the characteristic polynomial is much simpler since the Frobenius $\sigma$ fixes all of the matrix entries.  Producing matrices over $F^{\sigma}$, together with defining the various cases in a much more combinatorial manner, might provide a strategy for answering questions about $\mathcal{N}(G)_x$.

\begin{question}\label{T:nonemptyQ}
Can arguments similar to those in appearing in Sections \ref{S:valcalc} and \ref{S:slopes} yield complete descriptions of $\mathcal{N}(G)_x$ for all $x$ and $G = SL_n(F)$?
\end{question}

\end{section}

\begin{section}{Reduction Steps}\label{S:reduction}

\subsection{Geometry of the Newton strata}\label{S:geom} The group $W$ is generated by $s_1$ and $s_2$, the simple reflections through the walls of the chamber $C$.  In
coordinates, if we write $x=\pi^{\mu}w$ for $\mu=(\mu_1, \mu_2, \mu_3)$, then $s_1: \mathbf{a}_x \mapsto
\mathbf{a}_{x'}$, where $x' = \pi^{(\mu_2, \mu_1, \mu_3)}s_1w$, and $s_2: \mathbf{a}_x \mapsto
\mathbf{a}_{x''}$, where $x'' = \pi^{(\mu_1, \mu_3, \mu_2)}s_2w$.  We shall use this description of $W$, together with some basic geometry of the root system for $A_2$, to make several key reduction steps.

\begin{prop}\label{T:thetacodim}
Let $\theta \in \text{Aut}_F(G)$ be such that $\theta(I) = I$.  The automorphism $\theta$ then induces
bijections on $\widetilde{W}$ and $\mathcal{N}(G)$, and a bijection $\mathcal{N}(G)_x \xrightarrow{\sim}
\mathcal{N}(G)_{\theta(x)}$.  Moreover, \begin{equation*}\label{E:thetacodim} \codim((IxI)_{\leq \lambda}
\subseteq IxI) = \codim((I\theta(x)I)_{\leq \theta(\lambda)} \subseteq I\theta(x)I). \end{equation*}
\end{prop}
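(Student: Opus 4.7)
The plan is to trace $\theta$ through each of the structures appearing in the statement: the affine Bruhat decomposition, Kottwitz's classification $B(G)\leftrightarrow \mathcal{N}(G)$, the Newton polygon partial order, and the congruence filtration on $I$ used to compute codimensions.

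Because $\theta$ is an automorphism of $G$ preserving $I$, it permutes the double cosets in $G=\coprod_{y\in\widetilde{W}}IyI$, and so determines a bijection $\widetilde{W}\to\widetilde{W}$ which by abuse of notation is written $x\mapsto\theta(x)$. Since $\theta$ is $F$-rational, it commutes with $\sigma$ on $F$-points, so $\theta(gb\sigma(g)^{-1})=\theta(g)\theta(b)\sigma(\theta(g))^{-1}$; hence $\theta$ descends to a bijection of $\sigma$-conjugacy classes $B(G)\to B(G)$. Via the Kottwitz bijection, this yields a bijection $\theta_{*}\colon\mathcal{N}(G)\to\mathcal{N}(G)$ characterized by $\ov{\nu}(\theta(g))=\theta_{*}(\ov{\nu}(g))$, so that $\theta(G_{\lambda})=G_{\theta_{*}(\lambda)}$. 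Intersecting with Bruhat cells then yields $\theta((IxI)_{\lambda})=(I\theta(x)I)_{\theta_{*}(\lambda)}$; in particular the restriction $\theta_{*}\colon\mathcal{N}(G)_{x}\xrightarrow{\sim}\mathcal{N}(G)_{\theta(x)}$ is the bijection claimed.

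For the codimension equality I would next verify that $\theta_{*}$ preserves the Newton polygon partial order on $\mathcal{N}(G)$. Kottwitz's description of $\leq$ is root-theoretic, via the dominance order on coweights, and an $F$-rational automorphism fixing an Iwahori acts through the finite group of symmetries of the based root datum, each of which preserves dominance; combined with the previous paragraph this gives $\theta((IxI)_{\leq\lambda})=(I\theta(x)I)_{\leq\theta_{*}(\lambda)}$. Finally, since $\theta$ is $F$-rational and preserves $I$, it preserves the integral structure $\mathcal{O}$, hence the reduction map $I\to I(\mathcal{O}/P^{N})$ and its kernel $I^{N}$. Thus $\theta$ descends to an algebraic isomorphism $IxI/I^{N}\xrightarrow{\sim}I\theta(x)I/I^{N}$ that carries $p_{N}((IxI)_{\leq\lambda})$ onto $p_{N}((I\theta(x)I)_{\leq\theta_{*}(\lambda)})$, so the two codimensions must agree.

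The main technical obstacle, to my mind, is checking that $\theta_{*}$ genuinely preserves the partial order on $\mathcal{N}(G)$; the cleanest way is probably to observe that the $\theta$ actually needed in the sequel arise from a small explicit list of generators (the diagram involution of $SL_{3}$ and conjugation by the $120$-degree rotation $\varphi$ of the base alcove), each of which can be checked by direct inspection to induce a length-preserving automorphism of the poset $\mathcal{N}(G)_{x}$.
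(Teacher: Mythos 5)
Your proposal follows essentially the same route as the paper's proof: $\theta(I)=I$ gives the bijection on double cosets and hence on $\widetilde{W}$, $F$-rationality gives compatibility with $\sigma$-conjugacy and hence, via Kottwitz's bijection, the maps on $\mathcal{N}(G)$ and $\mathcal{N}(G)_x$, and the resulting stratum-by-stratum isomorphism $(IxI)_{\lambda}\xrightarrow{\sim}(I\theta(x)I)_{\theta(\lambda)}$ yields the codimension equality. The only difference is that you spell out two points the paper leaves implicit (preservation of the partial order on $\mathcal{N}(G)$ and compatibility with the finite-dimensional quotients defining codimension), which is fine and, as you note, is easily checked for the specific automorphisms $\varphi$ and $\psi$ actually used later.
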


\begin{proof}
Recall that there is a bijective correspondence between $\widetilde{W}$ and double cosets $IxI$.  The map $\theta: IxI \rightarrow \theta(IxI)=I\theta(x)I$ therefore induces a bijection on double cosets $\theta:I\ba G/I \xrightarrow{\sim} I\ba G/I$ and hence on $\theta: \widetilde{W} \xrightarrow{\sim} \widetilde{W}$.

In addition, since $\theta \in \text{Aut}_F(G)$, if two elements $g_1$ and $g_2$ are $\sigma$-conjugate in $G$, then $\theta(g_1)$ and $\theta(g_2)$ are also $\sigma$-conjugate.  We therefore also obtain a bijection on the level of $\sigma$-conjugacy classes $\theta: B(G) \xrightarrow{\sim} B(G)$, which gives rise to a bijection on the two sets of Newton slope sequences \linebreak $\theta:\mathcal{N}(G)\xrightarrow{\sim} \mathcal{N}(G)$ and $\theta: \mathcal{N}(G)_x  \xrightarrow{\sim} \mathcal{N}(G)_{\theta(x)}$.  Consequently, we obtain $\theta : (IxI)_{\lambda} \xrightarrow{\sim} (I \theta(x)I)_{\theta(\lambda)},$ which is an isomorphism of schemes.
\end{proof}

\begin{remark}
Let $\theta \in \text{Aut}_F(G)$ be such that $\theta (I)=I$, and assume in addition that $\theta(T) = T$.  Then $\theta(N_G(T)) = N_G(T)$, and so the bijection $\theta: \widetilde{W} \rightarrow \widetilde{W}$ will also be a group homomorphism.
\end{remark}

\begin{lemma}\label{T:reduction}
Let $x = \pi^{\mu}w \in \widetilde{W}$, where $\mu = (\mu_1, \mu_2, \mu_3)$. It suffices to calculate the
codimensions of the Newton strata in $IxI$ for the following cases:
\begin{enumerate}
\item[(A)] $\mathbf{a}_x \subset C^0$, where $\mu_2 \geq 0$,
\item[(B)] $\mathbf{a}_x \subset s_1(C^0)$, where $\mu_1 \geq 0$ and $\mu \neq (\mu_1, \mu_2, \mu_1)$.
\end{enumerate}
\end{lemma}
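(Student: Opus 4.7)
The plan is to apply Proposition \ref{T:thetacodim} with a pair of automorphisms of $G$ preserving $I$---a three-fold cyclic rotation $\varphi$ and an involution $\iota$---and verify that their combined action on $\widetilde{W}$ is rich enough to move any $x$ into a representative satisfying condition (A) or (B). By Proposition \ref{T:thetacodim}, codimensions of Newton strata are preserved under such automorphisms, so reducing $x$ modulo the group generated by $\varphi$ and $\iota$ will suffice.

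First I would construct $\varphi \in \text{Aut}_F(SL_3(F))$ as conjugation by the element
\begin{equation*}
\tau = \begin{pmatrix} 0 & 1 & 0 \\ 0 & 0 & 1 \\ \pi & 0 & 0 \end{pmatrix} \in GL_3(F).
\end{equation*}
A direct matrix check shows that $\tau$ normalizes the standard Iwahori $I$ of Section \ref{S:gpthy} (it cyclically permutes its three types of off-diagonal root subgroups), so $\varphi(I) = I$. Geometrically, $\varphi$ realizes the three-fold symmetry of the affine Dynkin diagram of type $A_2$ by rotating around the base alcove $\mathbf{a}_1$. Computing the induced action of $\varphi$ on $\widetilde{W}$, one verifies that the six finite Weyl chambers decompose into two orbits of size three under $\varphi$. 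After replacing $x$ with $\varphi^k(x)$ for a suitable $k \in \{0,1,2\}$, we may therefore assume $\mathbf{a}_x$ lies in either $C^0$ or $s_1(C^0)$.

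Second, I would introduce the involution $\iota(g) := w_0 (g^{-1})^T w_0$, where $w_0$ denotes the antidiagonal matrix representing the long Weyl element. Taking inverse-transpose sends $I$ to its opposite Iwahori $I^-$, and conjugation by $w_0$ then returns $I^-$ to $I$; so $\iota(I) = I$, and hence $\iota \in \text{Aut}_F(SL_3(F))$. A short calculation gives
\begin{equation*}
\iota(\pi^\mu w) = \pi^{-(\mu_3, \mu_2, \mu_1)} (w_0 w w_0),
\end{equation*}
from which one verifies that $\iota$ preserves each of the chamber conditions $\mathbf{a}_x \subset C^0$ and $\mathbf{a}_x \subset s_1(C^0)$ separately. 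Crucially, $\iota$ flips the sign of $\mu_2$ in the first case and of $\mu_1$ in the second. Hence, after possibly applying $\iota$, we may assume $\mu_2 \geq 0$ in case (A) and $\mu_1 \geq 0$ in case (B). The exceptional family $\mu = (\mu_1, \mu_2, \mu_1)$ flagged in (B) consists precisely of the alcoves in $s_1(C^0)$ fixed by $\iota$; these cannot be moved by $\iota$ but already satisfy $\mu_1 \geq 0$ after an appropriate application of $\varphi$ alone.

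The main obstacle will be the explicit bookkeeping required to verify that $\varphi$ and $\iota$ act on $\widetilde{W}$ in exactly the manner claimed---in particular, identifying the two $\varphi$-orbits on the six finite Weyl chambers and confirming the sign-flip behavior of $\iota$ under the paper's less standard convention placing $\mathbf{a}_1$ in the antidominant chamber $C^0$. Once these actions are made explicit, the lemma is immediate from Proposition \ref{T:thetacodim} by iterating the symmetries generated by $\varphi$ and $\iota$.
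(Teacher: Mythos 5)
Your overall strategy is the same as the paper's: apply Proposition \ref{T:thetacodim} to the alcove rotation $\varphi$ and to the involution $\iota(g)=w_0(g^{-1})^Tw_0$, which is exactly the paper's $\psi$; your formula $\iota(\pi^{\mu}w)=\pi^{(-\mu_3,-\mu_2,-\mu_1)}(w_0ww_0)$ and the first reduction (two rotation orbits of Weyl chambers, so some $\varphi^k$ brings $\mathbf{a}_x$ into $C^0\cup s_1(C^0)$) are correct. The gap is in the asserted behavior of $\iota$ on the chamber $s_1(C^0)$. With the paper's convention $\mathbf{a}_x=\mu+w(\mathbf{a}_1)$ one checks that $\mathbf{a}_{\iota(x)}=-w_0(\mathbf{a}_x)$, and the linear map $-w_0$ fixes $C^0$ and the dominant chamber but \emph{interchanges} $s_1(C^0)$ and $s_2(C^0)$. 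So your case (A) step is fine ($\iota$ preserves $C^0$ and sends $\mu_2\mapsto-\mu_2$), but it is false that $\iota$ preserves the condition $\mathbf{a}_x\subset s_1(C^0)$: for example $x=\pi^{(0,-1,1)}s_2s_1$ has $\mathbf{a}_x\subset s_1(C^0)$, while $\iota(x)=\pi^{(-1,1,0)}s_1s_2$ has its alcove in $s_2(C^0)$. In particular no alcove of $s_1(C^0)$ is fixed by $\iota$, so your identification of the excluded family $\mu=(\mu_1,\mu_2,\mu_1)$ with the $\iota$-fixed alcoves cannot be right, and the claimed sign flip of $\mu_1$ within $s_1(C^0)$ collapses as written.

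What actually works, and is what the paper's terse proof amounts to, is to use the symmetry of the base alcove that stabilizes $s_1(C^0)$, namely a composite of $\iota$ with a suitable power of $\varphi$ (a reflection of $\mathbf{a}_1$ fixing a vertex other than the origin). Its effect on the translation part is affine rather than linear: the first coordinate becomes $-\mu_1-1$ or $-\mu_1$ depending on $w$, which still yields the reduction to $\mu_1\geq 0$. The excluded family $\mu=(\mu_1,\mu_2,\mu_1)$ then needs a separate justification: for such $x$ the inequality $\mu_1\geq 0$ is automatic (being in $s_1(C^0)$ forces $\mu_2\leq\mu_1=\mu_3$, and $\sum\mu_i=0$ gives $\mu_2=-2\mu_1\leq\mu_1$), so the issue is not positivity but that these elements are omitted from (B) entirely and must be shown equivalent to covered ones. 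They sit against the wall $\mu_1=\mu_3$, where the generic chamber bookkeeping fails, and suitable composites of $\varphi$ and $\iota$ carry them into case (A); for instance $\pi^{(1,-2,1)}s_1$ is equivalent to $\pi^{(-3,1,2)}s_{121}$, whose alcove lies in $C^0$ with $\mu_2\geq 0$. Your proposal leaves this family uncovered, so the bookkeeping you flag as the main obstacle is precisely where the argument currently fails.
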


\begin{proof} Once we compute the codimensions of the Newton strata inside $IxI$ for all $x$ such that $\mathbf{a}_x$ lies in one of two fixed and adjacent Weyl chambers, then we can obtain the codimensions for the remaining $x$ by applying Proposition \ref{T:thetacodim} to the automorphism of $I$ which changes the coordinates that determine the origin for the base alcove.  Similarly, once we compute the codimensions of the Newton strata inside $IxI$ where $\mu$ has two non-negative coordinates, we obtain the remaining ones by applying Proposition \ref{T:thetacodim} to the automorphism which exchanges the two simple roots.

First we consider the symmetries of the base alcove, which change the coordinates that determine which vertex of $\mathbf{a}_1$ is the
origin.  Let us take a representative for the rotation by 120 degrees about the center of $\mathbf{a}_1$ to be
\begin{equation*}\tau := \displaystyle \begin{pmatrix}0 & 0 & \pi^{-1} \\ 1 & 0 & 0 \\ 0 & 1 & 0 \end{pmatrix} \in GL_3(F). \end{equation*} Define $\varphi(g):= \tau g \tau^{-1}\in \text{Aut}_F(G)$, and note that $\varphi$ fixes $I$.  Since $\sigma(\tau) = \tau$, the induced map $\varphi: \mathcal{N}(G) \rightarrow \mathcal{N}(G)$ is
the identity.  In addition, one can check that $\varphi(x) =\pi^ys_1s_2w(s_1s_2)^{-1},$ where $y = (-1,0,0)+s_1s_2(\mu_1,\mu_2,\mu_3) + s_1s_2w(0,0,1)$.  Proposition \ref{T:thetacodim} says that we can extend the calculations for $x \in \widetilde{W}$ such that $\mathbf{a}_x$ lie in two
adjacent Weyl chambers, to $I\varphi(x)I$ and $I\varphi^2(x)I$.

Another element of $\text{Aut}_F(G)$ comes from the automorphism of the Dynkin diagram associated to $\text{Lie}(G)$, which interchanges the two simple roots.  Denote by $\eta$ the longest element in the Weyl group \begin{equation*}\eta:= s_1s_2s_1 = \begin{pmatrix} 0&0&1\\0&1&0\\1&0&0\end{pmatrix},\end{equation*}  and define $\psi(g):= \eta (g^t)^{-1} \eta^{-1}$ for $g \in G$.  Then $\psi(I)=I$, and $\psi$ induces a map on $\mathcal{N}(G)$ given by $\psi (\mu_1, \mu_2,\mu_3) = (-\mu_3, -\mu_2, -\mu_1)$.  In addition, $\psi$ induces a bijection $\psi(x) = \pi^{(-\mu_3, -\mu_2,
-\mu_1)}w'$, where the reduced expression for $w'$ is obtained from $w$ by interchanging the subscripts 1 and 2.  Applying Proposition \ref{T:thetacodim} enables us to make the positivity restrictions on $\mu_1$ and $\mu_2$ in the two Weyl chambers.
\end{proof}

\subsection{Newton strata for single cosets}\label{S:coset}

As described in Section \ref{S:gpthy}, the affine Bruhat decomposition provides a natural decomposition of $G=SL_3(F)$ into Newton strata of the form $(IxI)_{\lambda}$.  In practice, however, it is easier to work with single cosets of the Iwahori subgroup.  For the purpose of computing the codimensions of the Newton strata, Lemma \ref{T:coset} below justifies passing to single cosets of the form $xI$.  We thus introduce two natural variants of the definitions of the Newton strata given in Section
\ref{S:gpthy}.  For a fixed $x \in \widetilde{W}$ and $\lambda \in \mathcal{N}(G)$, define \begin{equation*}
(xI)_{\lambda} := xI \cap (IxI)_{\lambda}, \end{equation*} \begin{equation*} (xI)_{\leq \lambda} :=
\coprod_{\lambda ' \leq \lambda} (xI)_{\lambda '}. \end{equation*}  Here again, $(xI)_{\lambda'}$ is non-empty
for only the finitely many $\lambda' \in \mathcal{N}(G)_x$, so that $(xI)_{\leq \lambda}$ is a union of finitely
many Newton strata $(IxI)_{\lambda'}$ intersected with the infinite-dimensional space $xI$.

By applying our notion of admissibility to the single coset stratum $(xI)_{\lambda}$, we can define
the codimension of $(xI)_{\leq \lambda}$ in $xI$. In addition, this codimension agrees with the desired codimension of $(IxI)_{\leq \lambda}$ in $IxI$.

\begin{lemma}\label{T:coset}
Fix $x \in \widetilde{W}$, and let $\lambda \in \mathcal{N}(G)_x$.  Then,
\begin{equation*}\label{E:coset}\codim\left( (IxI)_{\leq \lambda} \subseteq  IxI\right) =\codim\left( (xI)_{\leq
\lambda} \subseteq  xI\right). \end{equation*}
\end{lemma}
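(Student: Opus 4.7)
The plan is to construct an ind-scheme isomorphism $\phi : U \times xI \xrightarrow{\sim} IxI$, with $U$ a suitable transversal of dimension $\ell(x)$, that identifies the Newton strata $(IxI)_{\leq \lambda}$ with $U \times (xI)_{\leq \lambda}$. Once this is in place, the equality of codimensions is immediate: at any finite level $I^N$ large enough that $(xI)_{\leq \lambda}$ is admissible, the codimension of $U \times ((xI)_{\leq \lambda}/I^N)$ in $U \times (xI/I^N)$ coincides with the codimension of $(xI)_{\leq \lambda}/I^N$ in $xI/I^N$.

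To construct $\phi$, I would first invoke the affine Bruhat decomposition to produce a closed subscheme $U \subseteq I$, of dimension $\ell(x)$, admitting a bijective multiplication map $U \times I_x \xrightarrow{\sim} I$, where $I_x := I \cap xIx^{-1}$. Since $x^{-1}I_xx \subseteq I$, it follows that $IxI = UxI$ and that the multiplication $m : U \times xI \to IxI$, $(u, y) \mapsto uy$, is an isomorphism of ind-schemes. I then define
\begin{equation*}
\phi(u, y) := u\, y\, \sigma(u)^{-1}.
\end{equation*}
Because $\sigma(u)^{-1} \in I$, we have $y\sigma(u)^{-1} \in xI$, so $\phi(u, y)$ lies in $UxI = IxI$. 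Moreover $\phi = m \circ \tau^{-1}$, where $\tau : U \times xI \to U \times xI$ is the automorphism $\tau(u, y) := (u, y\sigma(u))$; hence $\phi$ is an ind-scheme isomorphism.

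The key point is that the Newton slope of $\phi(u, y) = uy\sigma(u)^{-1}$ equals the Newton slope of $y$, since the two elements are $\sigma$-conjugate and $\ov{\nu}$ is a $\sigma$-conjugation invariant. Consequently $\phi$ carries $U \times (xI)_{\leq \lambda}$ bijectively onto $(IxI)_{\leq \lambda}$. Descending to $I^N$-quotients is permissible because $I^N$ is normal in $I$, so that $\sigma(u)^{-1} I^N \sigma(u) = I^N$; thus the right $I^N$-orbit of $(u, y)$ in $U \times xI$ maps under $\phi$ bijectively onto the right $I^N$-orbit of $\phi(u, y)$ in $IxI$. It follows that $\phi$ descends to an isomorphism $\bar\phi : U \times (xI/I^N) \xrightarrow{\sim} IxI/I^N$ sending $U \times ((xI)_{\leq \lambda}/I^N)$ onto $(IxI)_{\leq \lambda}/I^N$. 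Hence $(IxI)_{\leq \lambda}$ is admissible at the same level $N$, and
\begin{equation*}
\codim\!\left((IxI)_{\leq \lambda} \subseteq IxI\right)
= \codim\!\left(U \times ((xI)_{\leq \lambda}/I^N) \subseteq U \times (xI/I^N)\right)
= \codim\!\left((xI)_{\leq \lambda} \subseteq xI\right).
\end{equation*}

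The main obstacle is setting up the Bruhat-style factorization $I = U \cdot I_x$ and the induced decomposition $IxI \cong U \times xI$ carefully at the level of ind-schemes, so that the $I^N$-quotients inherit the structure of finite-dimensional affine schemes in a manner compatible with admissibility; this is standard but is the only part of the argument that uses the group-theoretic structure of $I$ rather than the bare fact that $\ov{\nu}$ is a $\sigma$-conjugation invariant.
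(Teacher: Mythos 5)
Your proposal is correct and takes essentially the same route as the paper: the paper factors $I = H\cdot(I\cap xIx^{-1})$ with a finite-dimensional transversal $H$ (written out explicitly, starting with the case $\mathbf{a}_x \subset C^0$) and uses precisely your map $(h,y)\mapsto h\,y\,\sigma(h)^{-1}$ to identify $H\times xI$ with $IxI$ compatibly with the Newton strata, the key point being, as in your argument, that $\ov{\nu}$ is a $\sigma$-conjugation invariant. The only difference is cosmetic: you invoke the general Iwahori factorization $I = U\cdot(I\cap xIx^{-1})$ coming from the affine Bruhat decomposition instead of constructing the transversal case by case.
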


\begin{proof}
First consider the case in which $x=\pi^{\mu}w \in \widetilde{W}$ satisfies
$\mathbf{a}_x \subset C^0$.  In this case, $I \cap xIx^{-1}$ is of the form \begin{equation*}I\cap xIx^{-1}=
\begin{pmatrix} \mathcal{O}^{\times} & \mathcal{O} & \mathcal{O} \\ P^r & \mathcal{O}^{\times} & \mathcal{O} \\ P^s  & P^t & \mathcal{O}^{\times} \end{pmatrix},\end{equation*} where $r,s,t$ are positive integers that depend on $\mu$ and satisfy $s \geq r + t$. Consider
\begin{equation*}H:=
\begin{pmatrix} 1 & 0 & 0 \\ \ov{k}[\pi]_1^{r-1} & 1 & 0 \\ \ov{k}[\pi]_1^{s-1} & \ov{k}[\pi]_1^{t-1} & 1 \end{pmatrix}\subseteq
I,\end{equation*} where $\ov{k}[\pi]_1^n$ is the vector space over $\ov{k}$ generated by $\pi^i$ for $i = 1, \dots, n$. If $n=0$, we define
$\ov{k}[\pi]_1^n := 0$. The relationship $s \geq r+t$ implies that $H$ is a subgroup of $I$.  Observe that the Iwahori subgroup decomposes into a product $I=H\cdot (I\cap
xIx^{-1})$, in which $H \cap (I \cap xIx^{-1}) = 1$. One can verify that the map $H \times xI \rightarrow IxI$
given by $(h, xi) \mapsto hxi\sigma(h)^{-1}$ is an isomorphism of schemes.  Under this isomorphism, $H \times (xI)_{\lambda}$ is mapped to $(IxI)_{\lambda}$ and  $H \times (xI)_{\leq \lambda}$ is mapped to $(IxI)_{\leq \lambda}$, and so the result holds in the case where $\mathbf{a}_x \subset C^0$.  The other cases are handled similarly.
\end{proof}

\end{section}

\begin{section}{Conditions determining the Newton strata}\label{S:valcalc}

The proof of Theorem \ref{T:main} proceeds in two steps.  The focus of this section will be to calculate the
explicit form of the subscheme $(xI)_{\leq \lambda}$ inside $xI$ for all $x \in \widetilde{W}$ in the two cases determined by Lemma \ref{T:reduction}. We will see
that the conditions that define $(xI)_{\leq \lambda}$ are polynomial in finitely many of the
coefficients of the entries of a given $g \in xI$.  Having explicit formulae determining all Newton strata in
$xI$ allows us to compute $\mathcal{N}(G)_x$ for all $x$, from which we may obtain a concrete formula for $\length_{\mathcal{N}(G)_x}[\lambda, \nu_x]$.  We provide descriptions of $\mathcal{N}(G)_x$ in the next section, and the proof of Theorem \ref{T:main} then appears in Section \ref{S:codimcalc}.

For fixed $x \in \widetilde{W}$ and $\lambda \in \mathcal{N}(G)_x$, we use the
characteristic polynomial from Section \ref{S:charpoly} to find explicit conditions on the
entries of a particular $g \in xI$ that yield $\ov{\nu}(g) \leq \lambda$.  Since
$\lambda \in \mathfrak{a}_{\Q}$, we will encounter conditions on the valuations of the matrix entries that involve rational numbers.  We thus adopt the convention that $P^{\ell} := P^{\lceil \ell \rceil}$, for $\ell
\in \Q$.  In addition, we will occasionally abuse notation and write $\pi^{\ell}:= \pi^{\lceil \ell \rceil}$, for $\ell \in \Q$.

\subsection{Two technical lemmas}\label{S:twolemmas}

We open with two technical, but useful lemmas.  The first lemma reformulates the definition of the partial ordering on $\mathcal{N}(G)$ in terms of conditions on the valuations of the coefficients of the characteristic polynomial.

\begin{lemma}\label{T:charpolypo}
Fix $\lambda \in \mathcal{N}(G)$, and suppose that $\nu$ is the Newton slope sequence associated to the isocrystal $(F^3,g\sigma)$ for $g\in G$, having characteristic polynomial of the form $f = \sigma^3 + \alpha\sigma^2 + \beta\sigma + \gamma$.  Then, \begin{equation*}\label{E:charpolypo}\nu \leq \lambda \iff \alpha \in P^{-\lambda_1}\ \text{and}\ \beta \in P^{\lambda_3}.\end{equation*}
\end{lemma}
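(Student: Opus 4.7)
The plan is to translate the definitions of the Newton polygon and of the partial order on $\mathcal{N}(G)$ directly into valuation inequalities on $\alpha$ and $\beta$. The key simplification is that, because $g\in SL_3(F)$ forces both endpoints of the polygon to lie on the $x$-axis, both $N_\nu$ and $N_\lambda$ are completely determined by their two interior heights over $x=1$ and $x=2$.

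First I would record the preliminary fact that $\val(\gamma)=0$: this follows from $g\in SL_3(F)$ together with the formula $\gamma = -(\bar{D}/D)\det g$ derived in Section \ref{S:charpoly}, since $\sigma$ preserves the valuation. Consequently $N_\nu$ runs from $(0,0)$ to $(3,0)$, and since $\nu_1+\nu_2+\nu_3=0$ it is determined by $N_\nu(1)=\nu_1$ and $N_\nu(2)=\nu_1+\nu_2=-\nu_3$. By the definition recalled in Section \ref{S:cyclic}, $N_\nu$ is the tightest-fitting polygon from $(0,0)$ to $(3,0)$ passing on or above each of $(1,-\val(\alpha))$ and $(2,-\val(\beta))$; in particular one has
\[
\nu_1 \geq -\val(\alpha), \qquad -\nu_3 \geq -\val(\beta).
\]

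For the forward direction of the equivalence, the hypothesis $\nu\leq\lambda$ unfolds, by the definition of the partial order, into $N_\nu(x)\leq N_\lambda(x)$ for all $x\in[0,3]$. Evaluating at $x=1,2$ and combining with the inequalities above yields $\val(\alpha)\geq-\nu_1\geq-\lambda_1$ and $\val(\beta)\geq\nu_3\geq\lambda_3$, which is exactly $\alpha\in P^{-\lambda_1}$ and $\beta\in P^{\lambda_3}$. For the reverse direction, the hypotheses $\val(\alpha)\geq-\lambda_1$ and $\val(\beta)\geq\lambda_3$ state precisely that $N_\lambda$ is itself a polygon joining $(0,0)$ to $(3,0)$ lying on or above $(1,-\val(\alpha))$ and $(2,-\val(\beta))$; since $N_\nu$ is by definition the \emph{tightest-fitting} such polygon, we obtain $N_\nu(x)\leq N_\lambda(x)$ pointwise, i.e.\ $\nu\leq\lambda$.

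There is no real obstacle here: the only thing to be careful about is the paper's unconventional sign choice, which places the polygon above the points $(i,-\val(a_i))$ and flips the orientation of the partial order accordingly. Once this bookkeeping is pinned down, the lemma is a direct repackaging of the two definitions, with no computation needed beyond identifying $N_\nu(1)$ with $\nu_1$ and $N_\nu(2)$ with $-\nu_3$.
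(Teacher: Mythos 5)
Your argument is correct and follows essentially the same route as the paper: use $\val(\gamma)=0$ to fix the common endpoints, reduce $\nu\leq\lambda$ to the inequalities $\nu_1\leq\lambda_1$ and $\nu_3\geq\lambda_3$, and translate these into $\val(\alpha)\geq-\lambda_1$, $\val(\beta)\geq\lambda_3$ via the definition of the Newton polygon. You merely spell out (via the tightest-fitting property) the translation between the interior heights and the valuations of $\alpha,\beta$ that the paper's two-line proof leaves implicit, which is fine.
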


\begin{proof}
Write $\nu = (\nu_1,\nu_2,\nu_3)$.  Since $\val(\gamma) = 0$ in $SL_3(F)$, the endpoints for $N_{\nu}$ and $N_{\lambda}$ coincide.  We thus have that $\nu \leq \lambda$ precisely when $\lambda_1 \geq \nu_1$ and $\lambda_3 \leq \nu_3$.
\end{proof}

At present the second lemma is unmotivated, but the result will be useful in certain
natural subcases within the proofs of almost every subsequent proposition.

\begin{lemma}\label{T:irrelterms}
Let $\lambda, \mu \in X_*(T) \otimes_{\Z} \Q$, and assume that $\lambda$ is dominant.  If, in addition,
$\mu_1+\mu_3 \leq \lambda_3$, then $P^{\mu_2 - \lambda_1} \subseteq P^{\lambda_3}$.
\end{lemma}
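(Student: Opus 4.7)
The plan is a short arithmetic reduction exploiting the two defining constraints of the setting: both $\lambda$ and $\mu$ are coweights of $SL_3$, hence satisfy $\sum \lambda_i = \sum \mu_i = 0$, and $\lambda$ is dominant, meaning $\lambda_1 \geq \lambda_2 \geq \lambda_3$. Given the paper's convention $P^{\ell} := P^{\lceil \ell \rceil}$, the containment $P^{\mu_2 - \lambda_1} \subseteq P^{\lambda_3}$ will follow from the monotonicity of the ceiling function once we establish the real-number inequality $\mu_2 - \lambda_1 \geq \lambda_3$.

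First I would use the trace-zero condition to rewrite $\mu_2 = -\mu_1 - \mu_3$ and $\lambda_1 = -\lambda_2 - \lambda_3$, so that
\begin{equation*}
\mu_2 - \lambda_1 \;=\; -(\mu_1+\mu_3) + \lambda_2 + \lambda_3.
\end{equation*}
The hypothesis $\mu_1 + \mu_3 \leq \lambda_3$ converts this into
\begin{equation*}
\mu_2 - \lambda_1 \;\geq\; -\lambda_3 + \lambda_2 + \lambda_3 \;=\; \lambda_2.
\end{equation*}
Finally, dominance of $\lambda$ gives $\lambda_2 \geq \lambda_3$, so $\mu_2 - \lambda_1 \geq \lambda_3$. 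Applying $\lceil \cdot \rceil$ to both sides preserves the inequality, whence $\lceil \mu_2 - \lambda_1 \rceil \geq \lceil \lambda_3 \rceil$, and the containment of ideals $P^{\mu_2 - \lambda_1} \subseteq P^{\lambda_3}$ is immediate from the convention.

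There is no genuine obstacle here; the only point requiring any care is to remember to invoke \emph{both} trace-zero conditions (one for $\mu$, one for $\lambda$), since using only one of them gives an inequality in terms of $\lambda_2$ rather than $\lambda_3$, and the final step down from $\lambda_2$ to $\lambda_3$ is exactly what the dominance hypothesis supplies.
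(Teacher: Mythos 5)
Your proof is correct and follows essentially the same route as the paper: reduce to the real inequality $\mu_2-\lambda_1\geq\lambda_3$, then combine the trace-zero conditions on $\mu$ and $\lambda$ with the dominance inequality $\lambda_2\geq\lambda_3$; the paper merely orders these same steps slightly differently. Your explicit remark about monotonicity of the ceiling under the convention $P^{\ell}=P^{\lceil \ell\rceil}$ is a fine (if tacit in the paper) finishing touch.
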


\begin{proof}
It suffices to show that $\mu_2 - \lambda_1 \geq \lambda_3$.  Now, $\mu_1 + \mu_3 \leq \lambda_3
\iff -\mu_1 - \mu_3 \geq -\lambda_3$.  But $\mu_1+\mu_2+\mu_3 = 0$ in $SL_3(F)$, so that $\mu_2 \geq -\lambda_3
\geq -\lambda_2$, where we have also used that $\lambda$ is dominant.  Hence, $\mu_2
\geq \lambda_1+\lambda_3$, and so $\mu_2 - \lambda_1 \geq \lambda_3$, as desired.
\end{proof}

\subsection{Conditions on valuations determining the Newton strata: Case A}\label{S:valpolysA}

It suffices to compute $(xI)_{\leq \lambda}$ for $x \in \widetilde{W}$ such that the alcoves $\mathbf{a}_x$ satisfy either condition A or B as specified in Lemma \ref{T:reduction}. We begin by systematically analyzing case A.  Recall that in this case, we consider $x \in \widetilde{W}$ such that the translation component is antidominant and has two non-negative coordinates. In addition, if $x = \pi^{\mu}w \in \widetilde{W}$, then there are six possible values for $w \in W$:
\begin{align*}
(\text{I})\ &w = s_1s_2 & (\text{IV})\ &w = s_1\\
(\text{II})\ &w = s_2s_1 & (\text{V})\ &w = s_2\\
(\text{III})\ &w = s_1s_2s_1 & (\text{VI})\ &w = 1.
\end{align*}
In this subsection, we compute $(xI)_{\leq \lambda}$ for $x=\pi^{\mu}w$ such that $\mathbf{a}_x \subset C^0$ and $\mu_2\geq 0$, where $w\in W$ falls into one of the above six cases.

The reader should note that in order to rigorously verify the arguments for case B, which are only indicated in an abbreviated form in Section \ref{S:valpolysB}, he should also perform the following calculations not only for $xI$, but also for $xI'$, where $I'$ is the non-standard Iwahori subgroup defined in Equation \eqref{E:I'}.  We justify this claim in Section \ref{S:valpolysB}, although the calculations are more easily performed simultaneously with those for case A.

\begin{prop}[\textbf{Case IA}]\label{T:123vals} Let $x=\pi^{\mu}s_1s_2$ satisfy $\mathbf{a}_x \subset C^0$ and $\mu_2 \geq 0$.  Then $\mu_1< \mu_2 \leq \mu_3$.  In addition, $\mu_1 < 0$ and $\mu_3 > 0$.

Now fix $\lambda=(\lambda_1, \lambda_2, \lambda_3) \in \mathcal{N}(G)_x$.   We then have \begin{equation}\label{E:123gen} \lambda_1 \leq -\mu_1 - 1 \quad \text{and} \quad \lambda_3 \geq -\mu_3 + \frac{1}{2}.\end{equation} Further, the only possibility in which $\lambda_3 = -\mu_3 + \frac{1}{2}$ is for $\mu_2=\mu_3$.  Otherwise, $\lambda_3 \geq -\mu_3 + 1$.  In describing $(xI)_{\leq \lambda}$, we have the following two subcases:
\begin{enumerate}
\item[($i$)] If $-\mu_3 + \frac{1}{2}\leq \lambda_3\leq -\mu_2+1$, then \begin{equation}\label{E:123i} (xI)_{\leq \lambda} = \left\{ \begin{pmatrix} a& b& c\\d&e&f\\g&h&i \end{pmatrix} \in xI \biggm| a \in P^{-\lambda_1} \ \text{and}\ \begin{vmatrix} a & b\\ d& e\end{vmatrix} \in P^{\lambda_3}\right\}. \end{equation}
\item[($ii$)] If $-\mu_2+1 < \lambda_3$, then \begin{equation}\label{E:123ii} (xI)_{\leq \lambda} = \left\{ \begin{pmatrix} a& b& c\\d&e&f\\g&h&i \end{pmatrix} \in xI \biggm| a \in P^{-\lambda_1}\ \text{and}\ \ov{d}b+\ov{g}c \in P^{\lambda_3}\right\}. \end{equation} Note that subcase ($ii$) only arises when $\mu_2 >1$, since $\lambda_3$ is always non-positive.
\end{enumerate}
\end{prop}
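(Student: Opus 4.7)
The plan is to combine Lemma~\ref{T:charpolypo}---which reduces the membership $\ov{\nu}(g)\leq\lambda$ to the two valuation conditions $\alpha\in P^{-\lambda_1}$ and $\beta\in P^{\lambda_3}$---with the explicit coefficient formulas \eqref{E:alpha} and \eqref{E:beta}, tracking valuations through the shape of $xI$. The structural claims about $\mu$ follow by unpacking the hypothesis $\mathbf{a}_x\subset C^0$ with Weyl part $s_1s_2$: antidominance forces $\mu_1<\mu_2\leq\mu_3$ (the weak inequality reflecting that $\mathbf{a}_x$ may touch the $\{\mu_2=\mu_3\}$-wall); combined with $\mu_1+\mu_2+\mu_3=0$ and $\mu_2\geq 0$, this then yields $\mu_1<0<\mu_3$.

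Writing out $\pi^{\mu}s_1s_2$ and left-multiplying the standard Iwahori shows that a matrix $g=\begin{pmatrix}a&b&c\\d&e&f\\g&h&i\end{pmatrix}\in xI$ obeys $\val(c)=\mu_1$, $\val(d)=\mu_2$, $\val(h)=\mu_3$, together with $\val(a),\val(b)\geq\mu_1+1$, $\val(e),\val(f)\geq\mu_2$, $\val(g)\geq\mu_3+1$, and $\val(i)\geq\mu_3$. These bounds feed every subsequent estimate. For $\alpha$, a term-by-term check in \eqref{E:alpha} gives $\val(D)=2\mu_2+\mu_3$ and shows that every summand other than $-\ov{\ov{a}}$ has valuation at least $\mu_2$. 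Since $\lambda_1\leq-\mu_1-1$ will force $-\lambda_1\leq\mu_2+\mu_3-1$ (and, more pertinently, the dominant term controls things), the condition $\alpha\in P^{-\lambda_1}$ reduces to $a\in P^{-\lambda_1}$, while the lower bound $\val(a)\geq\mu_1+1$ forces $\lambda_1\leq-\mu_1-1$ for every $\lambda\in\mathcal{N}(G)_x$, as required by \eqref{E:123gen}.

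For $\beta$, the analysis of \eqref{E:beta} is the heart of the argument and depends on a direct algebraic identity. Expanding $D=\sigma(d)(dh-eg)+\sigma(g)(di-fg)$ and using that $\sigma$ is a ring homomorphism, one verifies
\[B_1-\ov{\ov{d}}\ov{b}=\frac{\ov{\ov{d}}(dh-eg)}{D}\,\sigma(ae-bd)-\frac{\ov{\ov{d}}\ov{b}\,\sigma(g)(di-fg)}{D}+B_1',\]
where $B_1'$ denotes the $\ov{\ov{g}}\ov{f}$-contribution to $B_1$. Since $\ov{\ov{d}}(dh-eg)/D$ is a unit, the leading obstruction to $\beta\in P^{\lambda_3}$ in subcase~(i) is $\sigma(ae-bd)$; the remaining summands ($B_2$, the tail $B_1'$, $-\ov{\ov{g}}\ov{c}$, the second term in the display, and $(\ov{D}/D)(ei-fh)$) all land in $P^{\lambda_3}$ automatically by Lemma~\ref{T:irrelterms} and direct valuation estimates, producing condition~\eqref{E:123i}. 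In subcase~(ii) the strengthened hypothesis $\lambda_3>-\mu_2+1$ pushes further terms---including the $\sigma(ae-bd)$ contribution---into $P^{\lambda_3}$; what survives of $\beta$ modulo $P^{\lambda_3}$ is precisely $-(\ov{\ov{d}}\ov{b}+\ov{\ov{g}}\ov{c})$, and since $\sigma(\ov{d}b+\ov{g}c)=\ov{\ov{d}}\ov{b}+\ov{\ov{g}}\ov{c}$ the condition reduces to~\eqref{E:123ii}. The unconditional lower bound $\val(\beta)\geq-\mu_3+1$ (refined to $-\mu_3+\tfrac12$ only in the balanced case $\mu_2=\mu_3$, where a half-integer slope becomes achievable on a length-$2$ edge of the Newton polygon) yields the asserted restrictions on $\lambda_3$.

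The principal obstacle will be the bookkeeping behind the displayed identity: it is not visible a priori that the rational expressions $B_1$, $B_2$, and $(\ov{D}/D)(ei-fh)$ recombine so cleanly, and recognizing $B_1-\ov{\ov{d}}\ov{b}$ as a multiple of $\sigma(ae-bd)$ relies on an exact cancellation between the numerator of $B_1$ and one part of $\ov{b}\cdot D$. Once this algebraic structure is in hand, the valuation accounting for the remaining summands is routine. Establishing the converse---that the listed polynomial conditions genuinely cut out $(xI)_{\leq\lambda}$ and, in particular, that these strata are non-empty---further requires exhibiting explicit representatives, most easily produced with entries in $k((\pi))\subset F$, on which $\sigma$ acts trivially, so that the characteristic polynomial simplifies and its Newton polygon can be read off directly.
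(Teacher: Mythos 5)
Your outline follows the paper's own route: the coset shape of $xI$, the computation $\val(D)=2\mu_2+\mu_3$ making $e_1$ a cyclic vector, the reduction of $\ov{\nu}(g)\leq\lambda$ to $\alpha\in P^{-\lambda_1}$, $\beta\in P^{\lambda_3}$ via Lemma \ref{T:charpolypo}, and your displayed identity for $B_1-\ov{\ov{d}}\ov{b}$ is exactly the paper's rearrangement \eqref{E:betaarrange}, so subcase ($i$) goes through as you describe. (Two small remarks: the clean reason the non-$\ov{\ov{a}}$ part of $\alpha$ is harmless is that it lies in $\mathcal{O}\subseteq P^{-\lambda_1}$ because $\lambda_1\geq 0$ and $\mu_2\geq 0$ — your inequality ``$-\lambda_1\leq\mu_2+\mu_3-1$'' is not the relevant point; and the explicit representatives over $k((\pi))$ you invoke at the end are not needed for this proposition, since the biconditional in Lemma \ref{T:charpolypo} already yields the set equality — they only enter later, when $\mathcal{N}(G)_x$ itself is computed.)

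The genuine problem is subcase ($ii$). Your claim that the hypothesis $\lambda_3>-\mu_2+1$ pushes ``the $\sigma(ae-bd)$ contribution'' into $P^{\lambda_3}$ is false, and it is inconsistent with your own conclusion. Taking $\val(b)=\mu_1+1$ (allowed, since $b$ ranges over $P^{\mu_1+1}$) gives $\val(\ov{b}\ov{d})=-\mu_3+1<\lceil\lambda_3\rceil$, while $\val(\ov{a}\ov{e})\geq\mu_2-\lambda_1\geq\lambda_3$, so the unit multiple of $\sigma(ae-bd)$ in your identity does \emph{not} lie in $P^{\lambda_3}$; and if it did, what would survive from your rearranged expression would be $-\ov{\ov{d}}\ov{b}\,\ov{g}(di-fg)/D-\ov{\ov{g}}\ov{c}$, not $-(\ov{\ov{d}}\ov{b}+\ov{\ov{g}}\ov{c})$. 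The correct mechanism (the paper's) is to abandon the rearrangement in this range and work with $\beta=B_1+B_2-\ov{\ov{d}}\ov{b}-\ov{\ov{g}}\ov{c}+(\ov{D}/D)(ei-fh)$ directly: since the $\alpha$-condition has already forced $a\in P^{-\lambda_1}$, one gets $B_1\in P^{\mu_2-\lambda_1}$ and $B_2\in P^{\mu_3-\lambda_1}\subseteq P^{\mu_2-\lambda_1}$, and Lemma \ref{T:irrelterms} (whose hypothesis $\mu_1+\mu_3\leq\lambda_3$ holds precisely in subcase ($ii$)) puts these, together with $(\ov{D}/D)(ei-fh)$, into $P^{\lambda_3}$; only then does $\beta\equiv-(\ov{\ov{d}}\ov{b}+\ov{\ov{g}}\ov{c})\bmod P^{\lambda_3}$, giving \eqref{E:123ii}. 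So the stated conclusion is right, but the step you wrote would fail as reasoned; the needed ingredient is the improved estimate on $\val(a)$ coming from the $\alpha$-condition, not the lower bound on $\lambda_3$ alone.
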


\begin{proof}
We first claim that if $A \in xI$, then $e_1$ is a cyclic vector for $(F^3,A\sigma)$.  Since $w = s_1s_2$, we have that \begin{equation*}\label{E:123coset}
A:= \begin{pmatrix} a& b& c\\d&e&f\\g&h&i \end{pmatrix} \in \begin{pmatrix}P^{\mu_1+1} & P^{\mu_1+1} &
P^{\mu_1}_{\times}\\ P^{\mu_2}_{\times} & P^{\mu_2} & P^{\mu_2} \\ P^{\mu_3+1} & P^{\mu_3}_{\times} & P^{\mu_3}
\end{pmatrix} = xI.
\end{equation*}  Here, by $y \in P^k_{\times}$ we mean that $\val(y) = k$.  Recall from Section \ref{S:charpoly} that $D= \displaystyle \ov{d}\begin{vmatrix} d& e\\g&h \end{vmatrix} +
\ov{g}\begin{vmatrix} d&f\\g&i\end{vmatrix}$.  We may directly compute that $\val(D) = 2\mu_2+\mu_3$, since in this case we have $\mu_2 <
\mu_3 + 1$.  Hence, by Proposition \ref{T:Dneq0}, $e_1$
is a cyclic vector for $(F^3, A\sigma)$, and therefore the characteristic polynomial is given by the equations
provided in Section \ref{S:charpoly}.

Denote by $\nu_A$ the Newton slope sequence associated to $(F^3, A\sigma)$.  Recall from Lemma \ref{T:charpolypo} that $\nu_A \leq \lambda \iff \alpha \in P^{-\lambda_1}$ and $\beta \in P^{\lambda_3}$.  It thus suffices to compute the conditions under which $\alpha \in P^{-\lambda_1}$ and $\beta \in P^{\lambda_3}$.

We begin by examining the conditions under which $\alpha \in P^{-\lambda_1}$.  Observe that
$$\frac{1}{D}\left( (\ov{\ov{d}}\ov{e}+\ov{\ov{g}}\ov{f})\begin{vmatrix} d&e\\g&h\end{vmatrix} +
(\ov{\ov{d}}\ov{h}+\ov{\ov{g}}\ov{i})\begin{vmatrix}d&f\\g&i\end{vmatrix}\right) \in \mathcal{O},$$ since $\mu_2 \geq 0$.  Thus we see
by Equation \eqref{E:alpha} that $\alpha \in P^{-\lambda_1} \iff \ov{\ov{a}} \in P^{-\lambda_1} \iff a \in P^{-\lambda_1}$, since $\sigma(P)=P$. In addition, since in this case $a \in P^{\mu_1+1}$, we see that $\lambda_1\leq -\mu_1-1$.

Now we consider the condition $\beta \in P^{\lambda_3}$.  Compute that
\begin{align*}
B_1 &\in  P^{\mu_1+\mu_2+1} & B_2 &\in P^{\mu_1+\mu_3+1}\\
- \ov{\ov{d}}\ov{b}&\in P^{\mu_1+\mu_2+1} &  -\ov{\ov{g}}\ov{c} &\in P^{\mu_1+\mu_3+1}\\
\frac{\ov{D}}{D}\begin{vmatrix}e&f\\h&i\end{vmatrix} &\in P^{\mu_2+\mu_3} \subset \mathcal{O} & \phantom{}
\end{align*}  In particular, $\beta \in P^{\mu_1+\mu_2+1}$ and so if $\mu_2<\mu_3$, then $\lambda_3 \geq -\mu_3 + 1.$ In the special case in which $\mu_2 = \mu_3$, we instead have $\lambda_3 \geq \frac{\mu_1+1}{2} =  -\mu_3 + \frac{1}{2},$ and we have verified Equation \eqref{E:123gen}.  We will use our estimates for $\lambda_1$ and $\lambda_3$ to explicitly describe $\mathcal{N}(G)_x$ in Section \ref{S:slopes}.  Similar estimates will appear in all subsequent propositions without further comment.

Comparing the valuations of the summands of $\beta$, we see that we should consider two subcases, which are identical to those provided in the statement of the proposition:
\begin{enumerate}
\item[($i$)] $\mu_1+\mu_2+\frac{1}{2} \leq \lambda_3\leq \mu_1+\mu_3+1$,
\item[($ii$)] $\mu_1+\mu_3+1 < \lambda_3$.
\end{enumerate}
We now consider each subcase individually.

\vskip 10 pt Subcase ($i$): $\mu_1+\mu_2+\frac{1}{2} \leq \lambda_3\leq \mu_1+\mu_3+1$
\vskip 10 pt

First assume that $\lambda_3 = -\mu_3 + \frac{1}{2}$, which only arises if $\mu_2=\mu_3$.  In this special case, $\beta \in P^{\lambda_3}$ automatically.  Therefore, if $\mu_2=\mu_3$, we have \begin{equation}\label{E:123i*} (xI)_{\leq \lambda} = \left\{ \begin{pmatrix} a& b& c\\d&e&f\\g&h&i \end{pmatrix} \in xI \biggm| a \in P^{-\lambda_1} \right\}. \end{equation} The expression in \eqref{E:123i*} is equivalent to Equation \eqref{E:123i}, since in this special case we automatically have $\ov{\begin{vmatrix}a&b\\d&e\end{vmatrix}} \in P^{\lambda_3} \iff \begin{vmatrix}a&b\\d&e\end{vmatrix} \in P^{\lambda_3}$.

Now assume that $\mu_2 < \mu_3$ so that $\lambda_3 \geq -\mu_3 +1$. Then $\beta \in P^{\lambda_3} \iff B_1 -
\ov{\ov{d}}\ov{b} \in P^{\lambda_3}$.  We may write
\begin{align} B_1 - \ov{\ov{d}}\ov{b} &=
\frac{1}{D}\left(\ov{\ov{d}}\ov{a}\ov{e}\begin{vmatrix}d&e\\g&h\end{vmatrix} +
\ov{\ov{g}}\ov{a}\ov{f}\begin{vmatrix} d&e\\g&h\end{vmatrix}- \ov{\ov{d}}\ov{b}D\right)\notag \\ \phantom{B_1 -
\ov{\ov{d}}\ov{b}} &= \frac{1}{D}\left(\ov{\ov{d}}\ov{a}\ov{e}\begin{vmatrix}d&e\\g&h\end{vmatrix} +
\ov{\ov{g}}\ov{a}\ov{f}\begin{vmatrix} d&e\\g&h\end{vmatrix}-
\ov{\ov{d}}\ov{b}\ov{d}\begin{vmatrix}d&e\\g&h\end{vmatrix} -
\ov{\ov{d}}\ov{b}\ov{g}\begin{vmatrix}d&f\\g&i\end{vmatrix}\right) \notag \\\phantom{B_1 - \ov{\ov{d}}\ov{b}} &=
\frac{1}{D}\left(\ov{\ov{d}}\ov{\begin{vmatrix}a&b\\d&e\end{vmatrix}}\begin{vmatrix}d&e\\g&h\end{vmatrix} +
\ov{\ov{g}}\ov{a}\ov{f}\begin{vmatrix} d&e\\g&h\end{vmatrix}-
\ov{\ov{d}}\ov{b}\ov{g}\begin{vmatrix}d&f\\g&i\end{vmatrix}\right) .\label{E:betaarrange}\end{align}   First,
observe that $\displaystyle \frac{\ov{\ov{g}}\ov{a}\ov{f}}{D}\begin{vmatrix}d&e\\g&h\end{vmatrix} \in
P^{\mu_1+\mu_3+2}\subset P^{\lambda_3}$ in subcase ($i$).  Similarly, $\displaystyle
-\frac{\ov{\ov{d}}\ov{b}\ov{g}}{D}\begin{vmatrix}d&f\\g&i\end{vmatrix} \in P^{\lambda_3}$.  Thus, the second and
third terms in our final expression for $B_1 - \ov{\ov{d}}\ov{b}$ are automatically in $P^{\lambda_3}$.
Consequently, $\beta \in P^{\lambda_3} \iff \displaystyle
\frac{\ov{\ov{d}}}{D}\ov{\begin{vmatrix}a&b\\d&e\end{vmatrix}}\begin{vmatrix}d&e\\g&h\end{vmatrix} \in
P^{\lambda_3} \iff \ov{\begin{vmatrix}a&b\\d&e\end{vmatrix}} \in P^{\lambda_3}$, since $\displaystyle
\frac{\ov{\ov{d}}}{D}\begin{vmatrix}d&e\\g&h\end{vmatrix} \in \mathcal{O}^{\times}$.  Lemma \ref{T:charpolypo} and the fact that $\sigma(P)=P$ now imply the result for subcase ($i$).

\vskip 10 pt Subcase ($ii$): $\mu_1+\mu_3+1 < \lambda_3$
\vskip 10 pt

A priori, all four terms $B_1+B_2 - \ov{\ov{d}}\ov{b} - \ov{\ov{g}}\ov{c}$ affect $\val(\beta)$ if $\lambda_3 > \mu_1+\mu_3+1$.  Note, however, that both $B_1$ and $B_2$ contain a factor of $a$.  We used that $a \in
P^{\mu_1+1}$ to obtain our original estimates for the valuations of $B_1$ and $B_2$.  In this subcase, we
actually have a better estimate for $\val(a)$.  Namely, $a \in P^{-\lambda_1}$ by our analysis of $\alpha$.
In particular, we then see that $B_1 \in
P^{\mu_2-\lambda_1}.$ Since $\mu_1+\mu_3+1 < \lambda_3$, the hypotheses of Lemma \ref{T:irrelterms} are satisfied, and $B_1$ is automatically in $P^{\lambda_3}$. Similarly, we can compute that $B_2 \in
P^{\mu_3-\lambda_1} \subseteq P^{\mu_2-\lambda_1}$ since $\mu_2 \leq \mu_3$ so that $B_2$ is also automatically in
$P^{\lambda_3}$.  For the range of $\lambda_3$ specified in subcase ($ii$), we thus see that $\beta \in P^{\lambda_3} \iff \ov{\ov{d}}\ov{b} +
\ov{\ov{g}}\ov{c}\in P^{\lambda_3} \iff \ov{d}b +
\ov{g}c\in P^{\lambda_3}$, and Lemma \ref{T:charpolypo} implies Equation \eqref{E:123ii}.
\end{proof}

\begin{prop}[\textbf{Case IIA}]\label{T:132vals}
Let $x=\pi^{\mu}s_2s_1$ satisfy $\mathbf{a}_x \subset C^0$ and $\mu_2 \geq 0$. Then \linebreak $\mu_1< \mu_2 < \mu_3$.  In addition, $\mu_1 < 0$ and $\mu_3 > 0$.

Now fix $\lambda=(\lambda_1, \lambda_2, \lambda_3) \in \mathcal{N}(G)_x$.  We then have \begin{equation}\label{E:132gen} \lambda_1 \leq -\mu_1-1 \quad \text{and} \quad \lambda_3 \geq -\mu_3+1.\end{equation}  In describing $(xI)_{\leq \lambda}$, we have
the following two subcases:
\begin{enumerate}
\item[($i$)] If $-\mu_3+1\leq \lambda_3\leq -\mu_2$, then \begin{equation} (xI)_{\leq \lambda} = \left\{ \begin{pmatrix} a& b& c\\d&e&f\\g&h&i \end{pmatrix} \in xI \biggm| a \in P^{-\lambda_1} \ \text{and}\ \begin{vmatrix} a & b\\ d& e\end{vmatrix} \in P^{\lambda_3}\right\}. \end{equation}
\item[($ii$)] If $-\mu_2 < \lambda_3$, then \begin{equation} (xI)_{\leq \lambda} = \left\{ \begin{pmatrix} a& b& c\\d&e&f\\g&h&i \end{pmatrix} \in xI \biggm| a \in P^{-\lambda_1}\ \text{and}\ \ov{d}b+\ov{g}c \in P^{\lambda_3}\right\}. \end{equation} Note that subcase ($ii$) only arises when $\mu_2 >0$, since $\lambda_3$ is always non-positive.
\end{enumerate}
\end{prop}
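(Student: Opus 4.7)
The plan is to follow the strategy of Proposition \ref{T:123vals} (Case IA) with the Weyl element $s_2s_1$ in place of $s_1s_2$, noting that this substitution shifts the valuation profile of the Iwahori coset. First I would multiply $\pi^{\mu}s_2s_1$ by $I$ on the right to obtain
\begin{equation*}
xI = \begin{pmatrix} P^{\mu_1+1} & P^{\mu_1}_{\times} & P^{\mu_1} \\ P^{\mu_2+1} & P^{\mu_2+1} & P^{\mu_2}_{\times} \\ P^{\mu_3}_{\times} & P^{\mu_3} & P^{\mu_3} \end{pmatrix},
\end{equation*}
and read off the alcove inequalities $\mu_1<\mu_2<\mu_3$ (strict, from the reduced expression $s_2s_1$), $\mu_1<0$, and $\mu_3>0$ directly from the hypotheses $\mathbf{a}_x\subset C^0$ and $\mu_2\geq 0$.

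Next, to verify that $e_1$ is a cyclic vector, I would compute $D$. Since $g\in P^{\mu_3}_{\times}$ and $fg$ is a unit times $\pi^{\mu_2+\mu_3}$, the summand $\ov{g}(di-fg)$ always has valuation exactly $\mu_2+2\mu_3$, giving $D\neq 0$ and allowing application of Proposition \ref{T:Dneq0}. The analysis of $\alpha$ is identical to Case IA: the second summand of equation \eqref{E:alpha} lies in $\mathcal{O}$ thanks to $\mu_2\geq 0$, so $\alpha\in P^{-\lambda_1}\iff a\in P^{-\lambda_1}$, yielding the bound $\lambda_1\leq -\mu_1-1$. For $\beta$, I would compute each term of equation \eqref{E:beta} in the new coset structure, obtaining $B_1,B_2\in P^{\mu_1+\mu_2+2}$, $-\ov{\ov{d}}\ov{b}\in P^{\mu_1+\mu_2+1}$, $-\ov{\ov{g}}\ov{c}\in P^{\mu_1+\mu_3}$, and the determinant term in $\mathcal{O}$. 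The minimum valuation $\mu_1+\mu_2+1=-\mu_3+1$, realized by $-\ov{\ov{d}}\ov{b}$, forces $\lambda_3\geq -\mu_3+1$ and places the natural threshold between the subcases at $\lambda_3=-\mu_2=\mu_1+\mu_3$.

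For subcase (ii), where $\lambda_3>-\mu_2$, the improved estimate $a\in P^{-\lambda_1}$ sharpens $B_1,B_2\in P^{\mu_2-\lambda_1+1}$; Lemma \ref{T:irrelterms} (whose hypothesis $\mu_1+\mu_3\leq \lambda_3$ is satisfied) then absorbs these into $P^{\lambda_3}$, while the determinant term is always in $\mathcal{O}\subseteq P^{\lambda_3}$, so $\beta\in P^{\lambda_3}$ collapses to $\ov{d}b+\ov{g}c\in P^{\lambda_3}$. Subcase (i) is where I expect the main obstacle: I would rewrite $B_1-\ov{\ov{d}}\ov{b}$ exactly as in equation \eqref{E:betaarrange} to surface $\ov{\begin{vmatrix}a&b\\d&e\end{vmatrix}}$ as the surviving invariant. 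The coefficient $\frac{\ov{\ov{d}}}{D}\begin{vmatrix}d&e\\g&h\end{vmatrix}$, which is a unit in Case IA, has generic valuation $\mu_2-\mu_3+2$ here; for $\mu_3-\mu_2\geq 2$ this coefficient remains a unit (because $\val(D)$ then takes the alternate expression $2\mu_2+\mu_3+2$) and the Case IA argument transfers directly, whereas in the boundary case $\mu_3=\mu_2+1$ subcase (i) collapses to the single value $\lambda_3=-\mu_3+1$ where the determinant condition is automatic. The final step is careful bookkeeping of the remaining auxiliary terms in the rearrangement, using the improved estimate on $a$ together with the constraint $\lambda_3\leq -\mu_2$ to absorb each of them into $P^{\lambda_3}$, thereby establishing the clean equivalence $\beta\in P^{\lambda_3}\iff \begin{vmatrix}a&b\\d&e\end{vmatrix}\in P^{\lambda_3}$.
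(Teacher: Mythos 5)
Your argument breaks at the very first step: in Case IIA the vector $e_1$ is \emph{not} a cyclic vector for every element of $xI$, and your justification of $D\neq 0$ is incorrect. You argue that $\ov{g}\bigl(di-fg\bigr)$ has valuation exactly $\mu_2+2\mu_3$ and conclude $D\neq 0$, but you ignore the other summand $\ov{d}\bigl(dh-eg\bigr)$, whose valuation is only bounded below by $2\mu_2+\mu_3+2$; as soon as $\mu_3\geq \mu_2+2$ the two summands can have equal valuation and cancel. For a concrete counterexample take $\mu=(-2,0,2)$ and
\begin{equation*}
A=\begin{pmatrix} 0 & \pi^{-2} & 0\\ \pi & 0 & 1\\ \pi^{2} & \pi^{2} & 0\end{pmatrix}\in xI,\qquad
D=\ov{d}\begin{vmatrix} d&e\\ g&h\end{vmatrix}+\ov{g}\begin{vmatrix} d&f\\ g&i\end{vmatrix}
=\pi\cdot\pi^{3}+\pi^{2}\cdot(-\pi^{2})=0,
\end{equation*}
so the formulas \eqref{E:alpha} and \eqref{E:beta} for the characteristic polynomial simply do not apply on a nonempty locus of $xI$, and your description of $(xI)_{\leq\lambda}$ is unproved there. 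The same defect poisons the rest of the computation even where $D\neq 0$: unlike Case IA, $\val(D)$ is not constant on $xI$ (it varies between $\min(2\mu_2+\mu_3+2,\ \mu_2+2\mu_3)$ and arbitrarily large values), so your term-by-term estimates for the correction to $\alpha$, for $B_1,B_2$, and above all for the coefficient $\tfrac{\ov{\ov{d}}}{D}\begin{vmatrix} d&e\\ g&h\end{vmatrix}$ in subcase ($i$) are not uniformly valid. Your attempted patch --- that for $\mu_3-\mu_2\geq 2$ one has $\val(D)=2\mu_2+\mu_3+2$ and the coefficient is a unit --- is false on both counts: $\val(D)$ need not take that value, and even when it does the coefficient is only in $\mathcal{O}$, not in $\mathcal{O}^{\times}$, so you cannot divide by it to isolate the minor $\begin{vmatrix} a&b\\ d&e\end{vmatrix}$.

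The paper's proof avoids all of this by abandoning $e_1$: it checks that $e_2$ is always cyclic in Case IIA (the analogous quantity $D'=\ov{b}\begin{vmatrix} a&b\\ g&h\end{vmatrix}-\ov{h}\begin{vmatrix} b&c\\ h&i\end{vmatrix}$ has valuation exactly $2\mu_1+\mu_3$ on all of $xI$), conjugates by the permutation matrix $B$ interchanging $e_1$ and $e_2$ so that the formulas of Section \ref{S:charpoly} apply to $BAB^{-1}$, carries out the Case IA-style analysis in those coordinates (where $\val(D)=2\mu_1+\mu_3$ is constant and the relevant coefficients genuinely are units, and where subcase ($ii$) requires an extra manipulation of the $\tfrac{\ov{D}}{D}fh$ term that your outline does not anticipate), and only then conjugates back to obtain the stated conditions $a\in P^{-\lambda_1}$, $\begin{vmatrix} a&b\\ d&e\end{vmatrix}\in P^{\lambda_3}$, resp. $\ov{d}b+\ov{g}c\in P^{\lambda_3}$. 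Without this change of cyclic vector (or some substitute argument handling the locus $D=0$ and the variable valuation of $D$), your proposal does not establish the proposition.
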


\begin{proof}
Unlike in Proposition \ref{T:123vals}, $e_1$ is not automatically a cyclic vector in this case.  By computing $e_2 \wedge \Phi(e_2) \wedge \Phi^2(e_2)$ as in the proof of Proposition \ref{T:Dneq0}, we see that $e_2$ is a cyclic vector for $(F^3, A\sigma)$ if and only if $D'= \displaystyle \ov{b}\begin{vmatrix} a& b\\g&h \end{vmatrix} -
\ov{h}\begin{vmatrix} b&c\\h&i\end{vmatrix} \neq 0$.  We have that $\val(D') = 2\mu_1+\mu_3$ in this case, and thus $e_2$ is a cyclic vector for $(F^3, A\sigma)$.

So that we can continue to use the expressions for $\alpha$ and $\beta$ from Section \ref{S:charpoly}, we replace $\Phi = A\sigma$ by $B\Phi B^{-1}$, where \begin{equation*} B:= \begin{pmatrix} 0&1&0\\1&0&0\\0&0&1\end{pmatrix}\end{equation*} is a change of basis.  Since $\sigma$ fixes $B$, we then see that $e_1$ is a cyclic vector for $(F^3, BAB^{-1}\sigma)$.  Thus, if we denote by $A' := BAB^{-1}$, we have that
\begin{equation*}A':= \begin{pmatrix} a&b&c\\d&e&f\\g&h&i \end{pmatrix} \in \begin{pmatrix}P^{\mu_2+1} & P^{\mu_2+1} &
P^{\mu_2}_{\times}\\ P^{\mu_1}_{\times} & P^{\mu_1+1} & P^{\mu_1} \\ P^{\mu_3} & P^{\mu_3}_{\times} & P^{\mu_3}
\end{pmatrix} = B(xI)B^{-1}.\end{equation*}
Since $\sigma(B)=B$, conjugation and $\sigma$-conjugation by $B$ coincide and thus induce the identity on $\mathcal{N}(G)_x$.  For the matrix $A'$ we have $\val(D) = 2\mu_1 + \mu_3$ and can thus use Equations \eqref{E:alpha} and \eqref{E:beta} to compute the conditions under which $A' \in (BxIB^{-1})_{\leq \lambda}$.  We then change back to our coordinates for $A$ instead of $A'$ to describe $(xI)_{\leq \lambda}$.

Lemma \ref{T:charpolypo} says that we need only compute the conditions under which $\alpha \in P^{-\lambda_1}$ and $\beta \in P^{\lambda_3}$.  First, observe that
$$-\ov{\ov{a}} - \frac{1}{D}\left( \ov{\ov{g}}\ov{f}\begin{vmatrix} d&e\\g&h\end{vmatrix} +
(\ov{\ov{d}}\ov{h}+\ov{\ov{g}}\ov{i})\begin{vmatrix}d&f\\g&i\end{vmatrix}\right) \in \mathcal{O}.$$  Hence we see that $\alpha \in P^{-\lambda_1} \iff \displaystyle \frac{\ov{\ov{d}}\ov{e}}{D}\begin{vmatrix} d & e\\g&h \end{vmatrix}\in P^{-\lambda_1} \iff \ov{e} \in P^{-\lambda_1}$, since $\displaystyle \frac{\ov{\ov{d}}}{D}\begin{vmatrix}d&e\\g&h \end{vmatrix} \in \mathcal{O}^{\times}$.  But then $ \ov{e} \in P^{-\lambda_1} \iff e \in P^{-\lambda_1}$, since $\sigma(P)=P$.

Similarly, by recalling Equation \eqref{E:beta} for $\beta$, we compute that
\begin{align*}
B_1 &\in  P^{\mu_1+\mu_2+2} & B_2 &\in \mathcal{O}\\
- \ov{\ov{d}}\ov{b}&\in P^{\mu_1+\mu_2+1} &  -\ov{\ov{g}}\ov{c} &\in \mathcal{O}\\
\frac{\ov{D}}{D}\begin{vmatrix}e&f\\h&i\end{vmatrix} &\in P^{\mu_1+\mu_3} & \phantom{}
\end{align*} Since $\alpha \in P^{\mu_1+1}$ and $\beta \in P^{\mu_1+\mu_2+1}$, Lemma \ref{T:charpolypo} implies Equation \eqref{E:132gen}.

Comparing the valuations of the summands of $\beta$, we once more see that we should consider two subcases:
\begin{enumerate}
\item[($i$)] $\mu_1+\mu_2+1 \leq \lambda_3\leq \mu_1+\mu_3$,
\item[($ii$)] $\mu_1+\mu_3 < \lambda_3$.
\end{enumerate}
We now consider each subcase individually.

\vskip 10 pt Subcase ($i$): $\mu_1+\mu_2+1 \leq \lambda_3\leq \mu_1+\mu_3$
\vskip 10 pt

In this subcase, $\beta \in P^{\lambda_3} \iff B_1 - \ov{\ov{d}}\ov{b} \in P^{\lambda_3}$.  Recalling Equation \eqref{E:betaarrange}, we have that \begin{equation*} B_1 - \ov{\ov{d}}\ov{b} = \frac{1}{D}\left(\ov{\ov{d}}\ov{\begin{vmatrix}a&b\\d&e\end{vmatrix}}\begin{vmatrix}d&e\\g&h\end{vmatrix} +
\ov{\ov{g}}\ov{a}\ov{f}\begin{vmatrix} d&e\\g&h\end{vmatrix}- \ov{\ov{d}}\ov{b}\ov{g}\begin{vmatrix}d&f\\g&i\end{vmatrix}\right).  \end{equation*} Again, the last two terms are automatically in $\mathcal{O}$, so
$\beta \in P^{\lambda_3} \iff \displaystyle
\frac{\ov{\ov{d}}}{D}\ov{\begin{vmatrix}a&b\\d&e\end{vmatrix}}\begin{vmatrix}d&e\\g&h\end{vmatrix} \in P^{\lambda_3} \iff \ov{\begin{vmatrix}a&b\\d&e\end{vmatrix}} \in P^{\lambda_3}$, since $\displaystyle \frac{\ov{\ov{d}}}{D}\begin{vmatrix}d&e\\g&h\end{vmatrix} \in
\mathcal{O}^{\times}$.  But again, $\ov{\begin{vmatrix}a&b\\d&e\end{vmatrix}} \in P^{\lambda_3} \iff \begin{vmatrix}a&b\\d&e\end{vmatrix} \in P^{\lambda_3}$.

\vskip 10 pt Subcase ($ii$): $\mu_1+\mu_3 < \lambda_3$
\vskip 10 pt

Note that $\beta \in P^{\lambda_3} \iff B_1 - \ov{\ov{d}}\ov{b} + \ds \frac{\ov{D}}{D}ei - \frac{\ov{D}}{D}fh
\in P^{\lambda_3}$.  However, we can employ a better estimate for $\val(e)$, since $e \in P^{-\lambda_1}$.
Using this observation, we compute that $B_1 \in
P^{\mu_2-\lambda_1+1}\subset P^{\mu_2 -\lambda_1}$.  Again, Lemma \ref{T:irrelterms} applies to demonstrate that $B_1 \in P^{\lambda_3}$. In addition, note that $\ds \frac{\ov{D}}{D} \in
\mathcal{O}^{\times}$ so that $\ds \frac{\ov{D}}{D}ei \in P^{\mu_3-\lambda_1}\subset P^{\mu_2 - \lambda_1}$ by our hypotheses on $\mu$.  Therefore, $\displaystyle \frac{\ov{D}}{D}ei \in P^{\lambda_3}$ by Lemma \ref{T:irrelterms} as well.

We have thus shown that $\beta \in P^{\lambda_3} \iff \ov{\ov{d}}\ov{b} +\displaystyle \frac{\ov{D}}{D}fh \in
P^{\lambda_3}$. Write \begin{align*} \ov{\ov{d}}\ov{b} + \frac{\ov{D}}{D}fh &= \frac{D}{D}\ov{\ov{d}}\ov{b} +
\frac{\ov{D}}{D}fh \notag\\ \phantom{\ov{\ov{d}}\ov{b} + \frac{\ov{D}}{D}fh} &= \frac{1}{D}\left(
\ov{\ov{d}}\ov{b}\ov{d} \begin{vmatrix} d&e \\ g&h \end{vmatrix} +
\ov{\ov{d}}\ov{b}\ov{g}\begin{vmatrix}d&f\\g&i \end{vmatrix} +fh\ov{\ov{d}} \ov{\begin{vmatrix} d&e \\ g&h
\end{vmatrix}} + fh\ov{\ov{g}}\ov{\begin{vmatrix}d&f\\g&i \end{vmatrix}}\right).  \end{align*}  First we argue that the second and last terms in this expression lie in $\mathcal{O}$.  Computing valuations, we
see that $\ds \frac{\ov{\ov{d}}\ov{b}\ov{g}}{D}\begin{vmatrix}d&f\\g&i\end{vmatrix} +
\frac{fh\ov{\ov{g}}}{D}\ov{\begin{vmatrix}d&f\\g&i
\end{vmatrix}} \in
P^{\mu_2+\mu_3+1} +P^{2\mu_3} \subset \mathcal{O}$, since $\mu_3 > \mu_2 \geq 0$ by hypothesis.  Thus, $\beta \in
P^{\lambda_3} \iff \ds \frac{1}{D}\left( \ov{\ov{d}}\ov{b}\ov{d} \begin{vmatrix} d&e \\ g&h \end{vmatrix}
+fh\ov{\ov{d}} \ov{\begin{vmatrix} d&e \\ g&h
\end{vmatrix}} \right) \in P^{\lambda_3}$.

Now we again use our estimate on $\val(e)$ to show that $\ds \frac{-1}{D} \left( \ov{\ov{d}}\ov{b}\ov{d}ge
+ fh\ov{\ov{d}}\ov{ge} \right)$ is automatically in $P^{\lambda_3}$.  Using that $e \in P^{-\lambda_1}$, we see
that $\ds \frac{\ov{\ov{d}}\ov{b}\ov{d}ge}{D} \in P^{\mu_2-\lambda_1+1}\subset P^{\lambda_3}$ by Lemma
\ref{T:irrelterms}.  Similarly, $\ds \frac{fh\ov{\ov{d}}\ov{ge}}{D} \in P^{\mu_3 - \lambda_1} \subset
P^{\mu_2-\lambda_1} \subseteq P^{\lambda_3}$.

We have thus demonstrated that $\ds \beta \in P^{\lambda_3} \iff \frac{\ov{\ov{d}}\ov{d}h}{D}(\ov{b}d +
\ov{h}f)\in P^{\lambda_3}$ in subcase ($ii$).  Again since $\ds \frac{\ov{\ov{d}}\ov{d}h}{D} \in \mathcal{O}^{\times}$, we conclude that $\beta \in P^{\lambda_3} \iff \ov{b}d + \ov{h}f \in P^{\lambda_3}$. Altogether, we have
proved the following:
\begin{enumerate}
\item[($i$)] If $-\mu_3+1\leq \lambda_3\leq -\mu_2$, then \begin{equation*} (BxIB^{-1})_{\leq \lambda} = \left\{ \begin{pmatrix} a& b& c\\d&e&f\\g&h&i \end{pmatrix} \in BxIB^{-1} \biggm| e \in P^{-\lambda_1} \ \text{and}\ \begin{vmatrix} a & b\\ d& e\end{vmatrix} \in P^{\lambda_3}\right\}. \end{equation*}
\item[($ii$)] If $-\mu_2 < \lambda_3$, then \begin{equation*} (BxIB^{-1})_{\leq \lambda} = \left\{ \begin{pmatrix} a& b& c\\d&e&f\\g&h&i \end{pmatrix} \in BxIB^{-1} \biggm| e \in P^{-\lambda_1}\ \text{and}\ \ov{b}d+\ov{h}f \in P^{\lambda_3}\right\}. \end{equation*}
\end{enumerate}
Conjugating by $B^{-1}$ to change back to the original coordinates, we obtain the desired expressions for $(xI)_{\leq \lambda}$.
\end{proof}

To discuss the remaining cases, we make two additional reduction steps.  In order to use the results from
Section \ref{S:charpoly}, our analysis will break into certain natural subcases.  During the course of several of these subcases, the following lemma will be
useful.

\begin{lemma}\label{T:split}
Any short exact sequence of isocrystals over $F$ splits.
\end{lemma}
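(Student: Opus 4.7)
The plan is to invoke the Dieudonn\'{e}-Manin classification of isocrystals over $F = \overline{k}((\pi))$. Since the residue field $\overline{k}$ is algebraically closed, this classical theorem asserts that every isocrystal $(V, \Phi)$ decomposes as a direct sum of simple isocrystals $V_\lambda$ indexed by rationals $\lambda \in \Q$, where $V_\lambda$ has pure Newton slope $\lambda$ (concretely, $V_\lambda \cong R/R(\sigma^r - \pi^s)$ for $\lambda = s/r$ in lowest terms with $r > 0$, in the notation of Section~\ref{S:cyclic}). Moreover $\operatorname{Hom}(V_\lambda, V_\mu) = 0$ whenever $\lambda \neq \mu$, and each $V_\lambda$ has endomorphism ring a division algebra.

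From these facts it follows formally that the category of isocrystals over $F$ is semisimple: every object is isomorphic to a direct sum of simple objects, and hence is both projective and injective in this abelian category. Given a short exact sequence
\[ 0 \to V' \xrightarrow{\iota} V \xrightarrow{p} V'' \to 0, \]
a splitting then exists by lifting the identity of $V''$ through the surjection $p$ using projectivity of $V''$; equivalently, one observes that $\operatorname{Ext}^{1}$ vanishes between any two objects of a semisimple abelian category, so the sequence is automatically split.

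The only nontrivial input is the Dieudonn\'{e}-Manin theorem itself, which is standard in the setting of a complete discretely valued field with algebraically closed residue field; the passage from semisimplicity to splitting of short exact sequences is purely formal and presents no real obstacle. Accordingly, I would expect the authors to dispose of the lemma in a single sentence by citing the classification, with the only minor subtlety being the verification that the decomposition takes place in the category of isocrystals (\emph{i.e.}, that the chosen decompositions are $\Phi$-stable), which is precisely the content of Dieudonn\'{e}-Manin.
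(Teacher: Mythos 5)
Your proposal is correct and amounts to the same approach as the paper: the paper gives no argument of its own but simply cites Demazure's lectures \cite{Dem} for the splitting, remarking that replacing $B(\ov{k})$ by $\ov{k}((\pi))$ does not affect the arguments, and the content of that reference is exactly the Dieudonn\'{e}-Manin classification and the resulting semisimplicity that you invoke. So your one-sentence expectation of how the lemma is disposed of is precisely what happens.
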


For a proof of this lemma, see \cite{Dem}, and note that changing fields from $B(\ov{k})$ to $\ov{k}((\pi))$ does not alter the arguments.  In applying Lemma \ref{T:split}, the following result will also be necessary.

\begin{lemma}\label{T:GL_2}
Let $G = GL_2(F)$ and $(F^2, g\sigma)$ be an isocrystal, where $g:=\begin{pmatrix} a&b\\c&d\end{pmatrix} \in G$.
Let $\lambda =(\lambda_1,\lambda_2) \in \mathcal{N}(G)_x$ for $x \in \widetilde{W}$, the corresponding affine
Weyl group, in which case $\lambda_1 + \lambda_2=\val(\det x)$. Then $e_1$ is a cyclic vector for $(F^2,g\sigma)$ if and only if $c \neq 0$, and in this situation we have
\begin{equation*}(xI)_{\leq \lambda} = \left\{ \begin{pmatrix} a&b\\c&d\end{pmatrix}  \in xI \biggm| \ov{a} + \frac{\ov{c}}{c}d \in P^{-\lambda_1}
\right\}\end{equation*}
\end{lemma}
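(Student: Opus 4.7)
The plan is to mirror the program of Section \ref{S:charpoly}, one dimension lower. Direct computation gives $e_1 \wedge \Phi(e_1) = c(e_1 \wedge e_2)$, so $e_1$ is a cyclic vector for $(F^2,g\sigma)$ if and only if $c \neq 0$; this is the two-dimensional analogue of Proposition \ref{T:Dneq0}.

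Assume $c \neq 0$ and write the characteristic polynomial as $f = \sigma^2 + \alpha\sigma + \beta$. Using $\Phi(e_1) = ae_1 + ce_2$ and $\Phi(e_2) = be_1 + de_2$, I would expand
\begin{equation*}
\Phi^2(e_1) = (\ov{a}a + \ov{c}b)e_1 + (\ov{a}c + \ov{c}d)e_2,
\end{equation*}
and substitute into the defining relation $\Phi^2(e_1) + \alpha\Phi(e_1) + \beta e_1 = 0$. The $e_2$-coefficient gives $\ov{a}c + \ov{c}d + \alpha c = 0$, which yields $\alpha = -\ov{a} - \tfrac{\ov{c}}{c}d$. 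Feeding this back into the $e_1$-coefficient produces $\beta = \tfrac{\ov{c}}{c}\det g$; in particular $\val(\beta) = \val(\det g)$, which is constant on $xI$ since the Iwahori $I \subset GL_2(\mathcal{O})$ has determinants in $\mathcal{O}^{\times}$.

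The final step is the two-dimensional analogue of Lemma \ref{T:charpolypo}. Because $\val(\beta)$ is pinned down by $x$, both endpoints of the Newton polygon of $f$ coincide with those of $N_{\lambda}$ for every $g \in xI$, so the condition $\ov{\nu}(g) \leq \lambda$ collapses to the single inequality $\lambda_1 \geq -\val(\alpha)$, i.e.\ $\alpha \in P^{-\lambda_1}$. Using $\sigma(P) = P$ and the explicit formula for $\alpha$, this is equivalent to $\ov{a} + \tfrac{\ov{c}}{c}d \in P^{-\lambda_1}$, which is precisely the stated description of $(xI)_{\leq \lambda}$.

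The entire argument is routine linear algebra; the only subtlety worth flagging is the verification that $\val(\det g)$ is independent of the representative $g \in xI$, which underpins the reduction from two slope inequalities to a single one and is immediate from the shape of $I$.
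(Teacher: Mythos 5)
Your proposal is correct and follows essentially the same route as the paper's (much terser) proof: compute $e_1 \wedge \Phi(e_1)$ to get the cyclicity criterion $c \neq 0$, derive $\alpha = -(\ov{a} + \tfrac{\ov{c}}{c}d)$ and constant term $\tfrac{\ov{c}}{c}\det g$ by the linear algebra of Section \ref{S:charpoly}, and observe that with the endpoints fixed the condition $\ov{\nu}(g)\leq\lambda$ reduces to $\alpha \in P^{-\lambda_1}$. You have simply made explicit the details the paper leaves to the reader.
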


\begin{proof}Let $g\sigma = \Phi$.  To verify the condition under which $e_1$ is a cyclic vector, compute $e_1 \wedge \Phi(e_1)$.  If $e_1$ is a cyclic vector, then the characteristic polynomial for $(F^2,\Phi)$ is of the form $f = \sigma^2 + \alpha_1\sigma + \frac{\ov{c}}{c}\det g$.  Using linear algebra as in Section \ref{S:charpoly}, we compute that $\alpha_1 =  -(\ov{a} + \frac{\ov{c}}{c}d)$, as required.
\end{proof}

Our final reduction step in case A shows that, without loss of generality, we can assume that certain matrix entries are zero.

\begin{lemma}\label{T:13zeros}
Let $x = \pi^{\mu}s_1s_2s_1$, where $\mu = (\mu_1,\mu_2,\mu_3)$ satisfies $\mu_1 < \mu_2 < \mu_3$.  Define $$ J_1:= \begin{pmatrix} 1& 0& 0\\P^{\mu_2-\mu_1}&1&0\\P^{\mu_3-\mu_1}&P^{\mu_3-\mu_2}&1 \end{pmatrix}\quad \text{and}\quad  K_1:=  \left( xI \cap \left\{
\begin{pmatrix} a& b& c\\d&e&0\\g&0&0 \end{pmatrix} \right\} \right) .$$  Then the map \begin{align*}\kappa:  J_1 \times K_1 &\rightarrow xI \\ (j,k) & \mapsto j^{-1}k\sigma(j) \end{align*} is an isomorphism of schemes.
\end{lemma}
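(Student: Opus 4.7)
The plan is to construct an explicit two-sided inverse to $\kappa$. Given $M \in xI$, I seek the unique $j \in J_1$ for which the entries in positions $(2,3)$, $(3,3)$, $(3,2)$ of $jM\sigma(j)^{-1}$ all vanish; the resulting $k := jM\sigma(j)^{-1}$ will then lie in $K_1$, and $(j, k)$ is the preimage of $M$.

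To see that $\kappa$ indeed lands in $xI$, it suffices to check $x^{-1}J_1 x \subset I$. Writing $x = \pi^\mu\eta$ with $\eta = s_1s_2s_1$, the lower-triangular entry bounds $P^{\mu_2-\mu_1}, P^{\mu_3-\mu_1}, P^{\mu_3-\mu_2}$ imposed on $J_1$ are exactly what is needed so that $\pi^{-\mu}J_1\pi^\mu$ consists of lower-triangular unipotent matrices with $\mathcal{O}$-entries; conjugating by $\eta$ turns these into upper-triangular unipotent matrices with $\mathcal{O}$-entries, which lie in $I$. Combined with $J_1 \subset I$, this gives $\kappa(j,k) \in xI$.

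For the main step, parameterize $j \in J_1$ by its three free entries $\alpha, \beta, \gamma$ in positions $(2,1), (3,1), (3,2)$, and write $M = \begin{pmatrix}a&b&c\\d&e&f\\g_0&h&i\end{pmatrix}$. Since $\sigma(j)^{-1}$ is lower-unitriangular, column operations show that the three relevant entries of $jM\sigma(j)^{-1}$ take the form
\[
(2,3):\ \alpha c + f; \quad (3,3):\ \beta c + \gamma f + i; \quad (3,2):\ (\beta b + \gamma e + h) - \sigma(\gamma)(\beta c + \gamma f + i).
\]
The first equation yields $\alpha = -f/c$, well-defined since $c \in P^{\mu_1}_{\times}$. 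After using $(3,3) = 0$ to simplify $(3,2) = 0$ into the linear equation $\beta b + \gamma e + h = 0$, one obtains a $2\times 2$ linear system in $(\beta, \gamma)$ whose determinant $ce - bf$ has valuation exactly $\mu_1+\mu_2$, since the unit term $ce$ strictly dominates $bf \in P^{\mu_1+\mu_2+1}$. The system therefore admits a unique solution.

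Routine valuation estimates applied to the resulting rational expressions for $\alpha, \beta, \gamma$ confirm that they lie in $P^{\mu_2-\mu_1}, P^{\mu_3-\mu_1}, P^{\mu_3-\mu_2}$ respectively, placing $j$ in $J_1$. Since the inverse map $M \mapsto (j,k)$ is given by rational functions in the entries of $M$ whose denominators have fixed nonzero valuation, it descends to a morphism between the finite-dimensional quotients introduced in Section \ref{S:admis}, producing a scheme-theoretic inverse to $\kappa$. The main obstacle is really just the valuation bookkeeping; the algebraic manipulation itself is clean because the cancellation from imposing $(3,3) = 0$ collapses the $\sigma(\gamma)$-term in equation $(3,2)$, linearizing what might otherwise have been a nonlinear system.
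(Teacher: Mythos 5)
Your proposal is correct and follows essentially the same route as the paper: you solve explicitly for the unique $j$ whose $\sigma$-twisted conjugation kills the $(2,3)$, $(3,3)$, $(3,2)$ entries, using that $\val(c)=\mu_1$ and $\val(ce-bf)=\mu_1+\mu_2$ are fixed, and your formulas for $\alpha,\beta,\gamma$ agree with the paper's $d', g', h'$. Your added observations — that imposing $(3,3)=0$ cancels the $\sigma(\gamma)$-term and linearizes the system, the valuation check that $j\in J_1$, and the verification that $\kappa$ lands in $xI$ — are small elaborations of details the paper leaves implicit, not a different argument.
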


\begin{proof}
Consider $J_1, K_1,$ and $xI$ as schemes over $k = \mathbb{F}_q$.  We illustrate the argument by providing a bijection on sets of $\ov{k}$-points.  Let \begin{equation*} A := \begin{pmatrix} a & b & c \\ d & e & f \\ g & h & i \end{pmatrix} \in xI(\ov{k}) \quad \text{and} \quad j := \begin{pmatrix}1 & 0 & 0 \\d' & 1 & 0\\g' & h' & 1 \end{pmatrix} \in J_1(\ov{k}). \end{equation*} The reader may verify that, if the entries of $j$ are defined as follows: \begin{equation*} d':= -\frac{f}{c}, \quad h' := \frac{bi -ch}{ce - bf}, \quad g':= -\left(\frac{i+fh'}{c}\right), \end{equation*} then the product $jA\sigma(j)^{-1}$ lies in $K_1(\ov{k})$.  Observe that $\val(c) = \mu_1$ and $\val(ce-bf) = \mu_1+\mu_2$, so that the entries of $j$ are completely determined by the entries of $A$.  We therefore see that, given any $A \in xI(\ov{k})$, there exists a unique $j \in J_1(\ov{k})$ such that $jA\sigma(j)^{-1} \in K_1(\ov{k})$.  Consequently, we obtain mutually inverse injective morphisms $\iota: (xI)(\ov{k}) \rightarrow (J_1 \times K_1)(\ov{k})$ given by $A\mapsto (j,k)$, where $k:=jA\sigma(j)^{-1}$, and $\kappa: (J_1 \times K_1)(\ov{k}) \rightarrow xI(\ov{k})$ by $(j,k) \mapsto j^{-1}k\sigma(j)$. The argument on $S$-points, where $S$ is an arbitrary ring, proceeds in like manner.
\end{proof}

This isomorphism of schemes allows us to instead compute the codimension of \linebreak $J_1 \times \left( K_1 \cap (xI)_{\leq \lambda}\right)$ inside $J_1 \times K_1$, which will significantly simplify our calculations.

\begin{cor}\label{T:K_1cor}
Let $x = \pi^{\mu}s_1s_2s_1$, where $\mu = (\mu_1,\mu_2,\mu_3)$ satisfies $\mu_1 < \mu_2 < \mu_3$.  Denote
$(K_1)_{\leq \lambda}:= K_1 \cap (xI)_{\leq \lambda}$.  Then \begin{equation*}\label{E:K_1cor} \codim((K_1)_{\leq
\lambda} \subseteq K_1) = \codim((xI)_{\leq \lambda}\subseteq xI).\end{equation*}
\end{cor}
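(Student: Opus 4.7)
The plan is to transport the codimension question from $xI$ over to the product $J_1 \times K_1$ using the isomorphism $\kappa$ supplied by Lemma \ref{T:13zeros}, and then argue that the $J_1$-factor is inert as far as codimension is concerned. The first key observation is that for any $(j,k) \in J_1 \times K_1$, the element $\kappa(j,k) = j^{-1}k\sigma(j)$ is $\sigma$-conjugate to $k$, so by the definition of $\ov{\nu}$ through $B(G)$ (Section \ref{S:strata}) we have $\ov{\nu}(\kappa(j,k)) = \ov{\nu}(k)$. Consequently, $\kappa$ restricts to a scheme isomorphism
\begin{equation*}
J_1 \times (K_1)_{\leq \lambda} \xrightarrow{\sim} (xI)_{\leq \lambda},
\end{equation*}
identifying the closed Newton stratum downstairs with a product.

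With this identification in hand, it remains to establish that $\codim(J_1 \times (K_1)_{\leq \lambda} \subseteq J_1 \times K_1) = \codim((K_1)_{\leq \lambda} \subseteq K_1)$. I would choose an integer $N$ large enough that $(K_1)_{\leq \lambda}$ is $\rho_N$-saturated inside $K_1$ (this is admissible because the defining conditions from Propositions of Section \ref{S:valcalc} are polynomial in finitely many coefficients). Then I would check that under $\kappa^{-1}$ the truncation $\rho_N$ on $xI$ pulls back to a truncation on $J_1 \times K_1$ that respects the product decomposition, so that $\rho_N(J_1 \times K_1)$ is a product of the corresponding finite-dimensional quotients of $J_1$ and $K_1$. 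On this finite-dimensional product, the elementary fact that $\codim(Y_1 \times Y_2 \subseteq Y_1 \times Z) = \codim(Y_2 \subseteq Z)$ gives the equality, and pushing the equality through $\kappa$ and appealing to Lemma \ref{T:13zeros} completes the argument.

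The main technical obstacle will be the bookkeeping in the second step: verifying that truncation modulo $P^N$ is genuinely compatible with the decomposition into $J_1$ and $K_1$. Because $\kappa$ is defined by the formula $\kappa(j,k) = j^{-1}k\sigma(j)$, the matrix entries of $\kappa(j,k)$ modulo $P^N$ depend only on the entries of $j$ and $k$ modulo $P^M$ for some $M \geq N$ determined by $\mu$, so one has to be a touch careful to enlarge $N$ so that the $\rho_N$-classes of $xI$ correspond bijectively to the pairs of $\rho_M$-classes on each factor. Once this is pinned down the codimension comparison is immediate, and the corollary follows from Lemma \ref{T:13zeros} together with the $\sigma$-conjugation invariance of $\ov{\nu}$.
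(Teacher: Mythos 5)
Your proposal is correct and follows essentially the same route the paper intends: the corollary is drawn directly from the isomorphism $\kappa$ of Lemma \ref{T:13zeros}, using that $j^{-1}k\sigma(j)$ is $\sigma$-conjugate to $k$, so $\kappa$ identifies $J_1 \times (K_1)_{\leq \lambda}$ with $(xI)_{\leq \lambda}$ and the codimension is computed on the product via admissibility. Your extra care with the truncation bookkeeping is a fine (if routine) elaboration of what the paper leaves implicit.
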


\begin{remark}\label{T:zerormk}
The reader should note that the argument in the proof of Lemma \ref{T:13zeros} generalizes to $x=\pi^{\mu}w$ in
$SL_n(F)$, where $w$ is the longest element of the Weyl group.  Furthermore, similar arguments can be employed
for any value of $w\in W$ to reduce to the case where certain matrix entries are zero.  We could have used analogous lemmas to make slight simplifications to the proofs of the previous two propositions.  However, such reductions do not reduce the complexity of the arguments as significantly as they do in the remaining
cases, and so the author finds it illuminating to carry out the computations for $w=s_1s_2$ and $w=s_2s_1$ without the
aid of any additional lemmas.
\end{remark}

The relative simplicity of the arguments in Propositions \ref{T:123vals} and \ref{T:132vals}
depends upon the existence of a convenient basis that guarantees that $e_1$ is a cyclic vector.  Since, by contrast, no such
obvious choice of basis exists in the remaining cases, we employ this further reduction step in each case to drastically
simplify the remaining arguments.  For the purpose of computing codimensions, we thus focus on providing a concrete description of $(K_1)_{\leq \lambda}$ inside $K_1$.  It is interesting to note, however, that the descriptions for $(K_1)_{\leq \lambda}$ actually agree with those for $(xI)_{\leq \lambda}$ in that these schemes are defined by precisely the same polynomial equations in the matrix entries.  This fact is somewhat tedious to verify and thus will not be proved.

\begin{prop}[\textbf{Case IIIA}]\label{T:13vals}Let $x=\pi^{\mu}s_1s_2s_1$ satisfy $\mathbf{a}_x \subset C^0$ and $\mu_2 \geq 0$.  Then $\mu_1 < \mu_2< \mu_3$.  In addition, $\mu_1 < 0$ and $\mu_3 >0$.

Now fix $\lambda=(\lambda_1,\lambda_2,\lambda_3) \in \mathcal{N}(G)_x$.  We then have \begin{equation}\label{E:13gen} \lambda_1 \leq -\mu_1-1 \quad \text{and} \quad \lambda_3 \geq -\mu_3+1.\end{equation} In describing $K_1 \cap (xI)_{\leq \lambda}$, we have the following two subcases:
\begin{enumerate}
\item[($i$)] If $\mu_2 + 1 < \mu_3$ and  $-\mu_3+1 \leq \lambda_3\leq -\mu_2$, or if $\mu_2+1=\mu_3$ and $d =0$, then we have \begin{equation}\label{E:13i} (K_1)_{\leq
\lambda} = \left\{ \begin{pmatrix} a& b& c\\d&e&0\\g&0&0 \end{pmatrix} \in K_1 \biggm| a \in P^{-\lambda_1}
\ \text{and}\ \begin{vmatrix} a & b\\ d& e\end{vmatrix} \in P^{\lambda_3}\right\}.
\end{equation}
\item[($ii$)] If $\mu_2+1<\mu_3$ and $-\mu_2 < \lambda_3$, or if $\mu_2+1=\mu_3$ and $d \neq 0$, then we have \begin{equation}\label{E:13ii}
(K_1)_{\leq \lambda} = \left\{ \begin{pmatrix} a& b& c\\d&e&0\\g&0&0 \end{pmatrix} \in K_1 \biggm| a \in
P^{-\lambda_1}\ \text{and}\ \ov{d}b+\ov{g}c \in P^{\lambda_3}\right\}. \end{equation}  Note that subcase ($ii$) only arises when $\mu_2 >0$, since $\lambda_3$ is always non-positive.  In the event that $\mu_2+1=\mu_3$ and $d\neq 0$, the value $\lambda_3=-\mu_3+1$ is fixed.  In addition, if $\mu_2+1<\mu_3$ and $-\mu_2 < \lambda_3$, then $d \neq 0$.
\end{enumerate}
\end{prop}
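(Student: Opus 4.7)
The plan is to parallel the structure of Propositions \ref{T:123vals} and \ref{T:132vals}, using Corollary \ref{T:K_1cor} to reduce to $K_1$ and then computing the characteristic polynomial of $A = \begin{pmatrix} a & b & c \\ d & e & 0 \\ g & 0 & 0 \end{pmatrix} \in K_1$ via the formulas of Section \ref{S:charpoly}. First I would unpack the hypotheses $\mathbf{a}_x \subset C^0$ and $\mu_2 \geq 0$ (together with $w = s_1 s_2 s_1$) to obtain $\mu_1 < \mu_2 < \mu_3$ with $\mu_1 < 0$, $\mu_3 > 0$, and to read off the valuation ranges of the entries of a generic $A \in xI$. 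A direct calculation gives $D = -\ov{d}\, e g$, so $e_1$ is cyclic for $(F^3,A\sigma)$ precisely when $d \neq 0$; this is the source of the bifurcation in the statement.

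When $d \neq 0$, substituting $f = h = i = 0$ into the Section \ref{S:charpoly} formulas produces the substantial simplifications
\begin{equation*}
\alpha = -\ov{\ov{a}} - \frac{\ov{\ov{d}}\,\ov{e}}{\ov{d}}, \qquad \beta = \frac{\ov{\ov{d}}}{\ov{d}}\,\ov{\begin{vmatrix} a & b \\ d & e \end{vmatrix}} - \ov{\ov{g}}\,\ov{c}.
\end{equation*}
By Lemma \ref{T:charpolypo} the task reduces to checking $\alpha \in P^{-\lambda_1}$ and $\beta \in P^{\lambda_3}$. Since $\ov{\ov{d}}/\ov{d}$ has valuation zero and $\lambda_1 \geq -\mu_2$ always (a short computation from $\sum \lambda_i = 0$, the ordering of the $\lambda_i$, and the bound on $\lambda_3$), the second summand of $\alpha$ is automatically in $P^{-\lambda_1}$, so $\alpha \in P^{-\lambda_1} \iff a \in P^{-\lambda_1}$. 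For $\beta$ I split on whether $\lambda_3 \leq -\mu_2$ (subcase (i)), in which case $\ov{\ov{g}}\,\ov{c} \in P^{-\mu_2} \subseteq P^{\lambda_3}$ automatically and $\beta \in P^{\lambda_3}$ collapses to $\begin{vmatrix} a & b \\ d & e \end{vmatrix} \in P^{\lambda_3}$, or $\lambda_3 > -\mu_2$ (subcase (ii)), where I rewrite $\beta = \frac{\ov{a}\ov{\ov{d}}\ov{e}}{\ov{d}} - \ov{\ov{d}}\,\ov{b} - \ov{\ov{g}}\,\ov{c}$, use $\val(a) \geq -\lambda_1$ to bound the first summand in $P^{\mu_2 - \lambda_1}$, and invoke Lemma \ref{T:irrelterms} (whose hypothesis $\mu_1 + \mu_3 \leq \lambda_3$ is exactly $-\mu_2 \leq \lambda_3$) to discard it, leaving the condition $\ov{d} b + \ov{g} c \in P^{\lambda_3}$. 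The bounds in \eqref{E:13gen} fall out of the valuation estimates along the way.

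When $d = 0$, $\mathrm{span}(e_1, e_3)$ is $\Phi$-invariant, so by Lemma \ref{T:split} the isocrystal splits as $V_1 \oplus V_2$, with $V_2$ of slope $-\mu_2$ and $V_1$ carrying the action of $A_1$ whose $(2,1)$-entry is $g \neq 0$. Lemma \ref{T:GL_2} determines the two slopes of $V_1$ from $\val(a)$ and $\val(\det A_1) = \mu_1+\mu_3 = -\mu_2$. Because $-\mu_2$ necessarily occurs in the combined slope sequence, $\lambda_3 \leq -\mu_2$ is forced; this rules out subcase (ii) when $d = 0$ (the parenthetical remarks in the statement). Comparing polygons, $\ov{\nu}(A) \leq \lambda$ then translates into $a \in P^{-\lambda_1}$ together with $a e = \begin{vmatrix} a & b \\ 0 & e \end{vmatrix} \in P^{\lambda_3}$, matching \eqref{E:13i} evaluated at $d = 0$. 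In the boundary case $\mu_2 + 1 = \mu_3$ one has $-\mu_3 + 1 = -\mu_2$, so $\lambda_3 = -\mu_2$ is the only admissible value, and the $d = 0$ and $d \neq 0$ sub-branches land in (i) and (ii) respectively as stated.

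The principal obstacle I anticipate is uniformly reconciling the two branches: the $d \neq 0$ branch is handled by characteristic-polynomial computations, while the $d = 0$ branch is handled by splitting the isocrystal, and it is not \textit{a priori} obvious that they should package into a single polynomial description. The reconciliation is made possible by the fact that $\begin{vmatrix} a & b \\ d & e \end{vmatrix}$ specializes to $ae$ when $d = 0$, so the $d = 0$ cutting condition is literally the $d = 0$ specialization of the $d \neq 0$ condition, which is what allows formula \eqref{E:13i} to be stated uniformly for $K_1$.
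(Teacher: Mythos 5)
Your route is essentially the paper's: reduce to $K_1$ via Corollary \ref{T:K_1cor}, compute $D=-\ov{d}eg$ so that cyclicity of $e_1$ is governed by $d\neq 0$, treat $d\neq 0$ with the simplified $\alpha$ and $\beta$ together with Lemmas \ref{T:charpolypo} and \ref{T:irrelterms} and the subcase split at $\lambda_3=-\mu_2=\mu_1+\mu_3$, and treat $d=0$ by splitting off the rank-one piece of slope $-\mu_2$ via Lemmas \ref{T:split} and \ref{T:GL_2}. Your subcase ($i$)/($ii$) computations for $d\neq0$ agree with the paper's, and in the $d=0$ branch your direct translation of $\ov{\nu}(A)\leq\lambda$ into the pair of conditions $a\in P^{-\lambda_1}$ and $ae\in P^{\lambda_3}$ is, if anything, a cleaner path to \eqref{E:13i} than the paper's detour through the intermediate description involving $a$ alone.

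The genuine gap is the boundary case $\mu_2+1=\mu_3$ with $d\neq 0$, which is part of the statement (``the value $\lambda_3=-\mu_3+1$ is fixed,'' and the assertion that this branch is described by \eqref{E:13ii}) and which you dispose of by assertion. The point is that here $\ov{\ov{g}}\ov{c}$ has valuation exactly $\mu_1+\mu_2+1$ while $\ov{\ov{d}}\ov{a}\ov{e}/\ov{d}$ can have the \emph{same} valuation (when $\val(a)=\mu_1+1$), so a priori cancellation could raise $\val(\beta)$ above $\mu_1+\mu_2+1$ and produce a slope sequence with $\lambda_3>-\mu_3+1$; your inequality $\lambda_3\geq-\mu_3+1=-\mu_2$ gives only the lower bound, and your splitting argument forces $\lambda_3\leq-\mu_2$ only for matrices with $d=0$. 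The paper excludes the cancellation by feeding the $\alpha$-condition back in: if $\lambda_3>\mu_1+\mu_2+1$, then (using $\lambda\leq\nu_x$ and $\mu_1=-2\mu_2-1$) necessarily $\lambda_1<-\mu_1-1$, so $a\in P^{-\lambda_1}$ forces $\val(\ov{\ov{d}}\ov{a}\ov{e}/\ov{d})\geq\mu_1+\mu_2+2$, whence $\val(\beta)=\mu_1+\mu_2+1$ exactly, a contradiction; the same valuation count shows the second condition in \eqref{E:13ii} is vacuous there, so the boundary $d\neq0$ branch lands in \eqref{E:13ii} (which in this degenerate case coincides with \eqref{E:13i}, both reducing to $a\in P^{-\lambda_1}$). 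Your dichotomy at $\lambda_3=-\mu_2$ would place this branch under the determinant condition, and without the observation just made it is not justified to say it ``lands in ($ii$) as stated.'' This is a localized omission, repairable with the same kind of valuation bookkeeping you use elsewhere, but as written the proposal does not prove that part of the proposition.
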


\begin{proof}
Consider an element \begin{equation*}\label{E:13coset}
A:= \begin{pmatrix} a& b& c\\d&e&0\\g&0&0 \end{pmatrix} \in \begin{pmatrix}P^{\mu_1+1} & P^{\mu_1+1} &
P^{\mu_1}_{\times}\\ P^{\mu_2+1} & P^{\mu_2}_{\times} & 0 \\ P^{\mu_3}_{\times} &0 & 0
\end{pmatrix} = K_1.
\end{equation*}  Here we compute that $D = -\ov{d}ge$, and so if $d \neq 0$, then $D\neq 0$.  We shall handle the two cases $d = 0$ and $d \neq 0$ separately.

\vskip 10 pt

First assume that $d\neq 0$.  Then $e_1$ is a cyclic vector for $(F^3,A\sigma)$, and Equations \eqref{E:alpha} and \eqref{E:beta} reduce to
\begin{equation*} \alpha = -\ov{\ov{a}} - \frac{\ov{\ov{d}}\ov{e}}{\ov{d}}, \quad \beta = \frac{\ov{\ov{d}}\ov{a}\ov{e}}{\ov{d}} - \ov{\ov{d}}\ov{b} - \ov{\ov{g}}\ov{c}. \end{equation*} Note that $\ov{\ov{d}}\ov{e}/\ov{d} \in \mathcal{O}$, since $\mu_2 \geq 0$.  Thus, $\alpha \in P^{-\lambda_1} \iff \ov{\ov{a}} \in P^{-\lambda_1} \iff a  \in P^{-\lambda_1}$.

Computing the valuations of the summands of $\beta$, we see that \begin{equation*} \frac{\ov{\ov{d}}\ov{a}\ov{e}}{\ov{d}} \in P^{\mu_1+\mu_2+1}, \quad \ov{\ov{d}}\ov{b} \in P^{\mu_1+\mu_2+2}, \quad \ov{\ov{g}}\ov{c} \in P^{\mu_1+\mu_3}_{\times}. \end{equation*} Observe that since $\alpha \in P^{\mu_1+1}$ and $\beta \in P^{\mu_1+\mu_2+1}$, Lemma \ref{T:charpolypo} implies Equation \eqref{E:13gen}.

Once more, we see that our analysis of $\beta$ naturally determines two subcases, in the event that $\mu_2+1<\mu_3$.  As before, we consider these subcases in turn, at first under the additional assumption that $\mu_2+1<\mu_3$.

\vskip 5 pt Subcase ($i$): $\mu_1+\mu_2+1 \leq \lambda_3\leq \mu_1 + \mu_3$, and $\mu_2+1<\mu_3$
\vskip 5 pt

In this subcase, we observe that $\ds \beta \in P^{\lambda_3} \iff \frac{\ov{\ov{d}}\ov{a}\ov{e}}{\ov{d}} - \ov{\ov{d}}\ov{b} \in P^{\lambda_3} \iff \ov{\ov{d}}\ov{a}\ov{e} - \ov{\ov{d}}\ov{d}\ov{b} \in P^{\lambda_3 + \val(d)} \iff \ov{\begin{vmatrix} a&b\\d&e \end{vmatrix}} \in P^{\lambda_3} \iff \begin{vmatrix} a&b\\d&e \end{vmatrix} \in P^{\lambda_3}$, as desired.

\vskip 5 pt Subcase ($ii$): $\mu_1+\mu_3 < \lambda_3$, and  $\mu_2+1<\mu_3$ \vskip 5 pt

We again use our estimate on $\val(a)$ to compute that $\ov{\ov{d}}\ov{ae}/\ov{d} \in P^{\mu_2 -
\lambda_1} \subseteq P^{\lambda_3}$ by Lemma \ref{T:irrelterms}. Thus, $\beta \in P^{\lambda_3} \iff
\ov{\ov{d}}\ov{b} + \ov{\ov{g}}\ov{c} \in P^{\lambda_3}  \iff
\ov{d}b + \ov{g}c \in P^{\lambda_3}$. By Lemma \ref{T:charpolypo}, the proposition is true in the case where $\mu_2+1<\mu_3$ and $d \neq 0$.

\vskip 5 pt Now assume that $\mu_2+1=\mu_3$. Computing the valuations of the summands of $\beta$ in this case, we see that \begin{equation*} \frac{\ov{\ov{d}}\ov{a}\ov{e}}{\ov{d}} \in P^{\mu_1+\mu_2+1}, \quad \ov{\ov{d}}\ov{b} \in P^{\mu_1+\mu_2+2}, \quad \ov{\ov{g}}\ov{c} \in P^{\mu_1+\mu_2+1}_{\times}. \end{equation*} Therefore, $\beta \in P^{\mu_1+\mu_2+1} = P^{-\mu_2}$ in this special case.  By Lemma \ref{T:charpolypo}, we thus see that $\nu_x = -(\mu_1+1,\mu_2,\mu_2)$.  Now, if $\lambda_3 > \mu_1+\mu_2+1$, then we must also have $\lambda_1 < -\mu_1-1$.  However, we see that  $\mu_2-\lambda_1 > \mu_1+\mu_2+1$, and so $\ov{\ov{d}}\ov{ae}/\ov{d} \in P^{\mu_2-\lambda_1}\subsetneq P^{\mu_1+\mu_2+1}$, since $a \in P^{-\lambda_1}$. But $\ov{\ov{g}}\ov{c} \in P^{\mu_1+\mu_2+1}_{\times} \iff \ov{g}c \in P^{\mu_1+\mu_2+1}_{\times}$, and so we must have $\beta \in P^{\mu_1+\mu_2+1}_{\times}$ and $\lambda_3=-\mu_3+1$ is fixed.  Since we also have that $\ov{\ov{d}}\ov{b} \in P^{\mu_1+\mu_2+2} \iff \ov{d}b \in P^{\mu_1+\mu_2+2} \subsetneq P^{\mu_1+\mu_2+1}$, Lemma \ref{T:charpolypo} yields Equation \eqref{E:13ii} in the case in which $\mu_2+1=\mu_3$ and $d \neq 0$.

\vskip 5 pt

Now assume that $d=0$.  In this case, we observe that $\left\{
\begin{pmatrix} y \\0\\z \end{pmatrix} \biggm| y,z \in F \right\} \cong F^2$ is a subspace fixed by $A\sigma$.
That is, $\left( F^2, \begin{pmatrix}a&c\\g&0\end{pmatrix}\sigma \right)$ is a sub-isocrystal of $(F^3, A\sigma)$, giving rise to the following short exact sequence:
\begin{equation*} 0 \longrightarrow \left( F^2, \begin{pmatrix}a&c\\g&0\end{pmatrix}\sigma \right) \longrightarrow (F^3, A\sigma) \longrightarrow (F, e\sigma) \rightarrow 0\ .
\end{equation*}  By Lemma \ref{T:split}, this short exact sequence splits, so that $(F^3, A\sigma)$ decomposes as the direct sum of the two sub-isocrystals.  The Newton slope sequence for the direct sum is obtained by ordering the Newton slopes of the two sub-isocrystals \cite{Kat}, so it suffices to understand the Newton strata for these two isocrystals.

Note that $\val(e)=\mu_2$ so that the only Newton slope sequence occurring for $(F,e\sigma)$ is $-\mu_2$.  Therefore, the Newton polygon for $(F^3, A\sigma)$ satisfies either $\lambda_3 = -\mu_2$ or $\lambda_2 = -\mu_2$, depending on whether or not $-\lambda_1+\mu_2\geq -\mu_2$.  In either case, note that $\lambda_3 \leq -\mu_2$, so that if $\mu_2+1<\mu_3$ we are necessarily in subcase ($i$).

Applying Lemma \ref{T:GL_2} to $(F^2, \begin{pmatrix} a&c\\g&0\end{pmatrix}\sigma)$, we see that $e_1$ is a cyclic vector,  since $\val(g) = \mu_3$.  We now define some notation to distinguish our restriction of $SL_3(F)$ to the copy of $GL_2(F)$ corresponding to this sub-isocrystal. Denote by $x' := \pi^{(\mu_1,\mu_3)}w'$, where $w'=\begin{pmatrix} 0&1\\1&0\end{pmatrix}$ is the restriction of $w$ to the affine Weyl group for $GL_2(F)$.  Let $\eta \in \mathcal{N}(G)_{x'}$.  Using this notation, applying Lemma \ref{T:GL_2} says that \begin{equation*} (x'I)_{\leq \eta} = \left\{ y \in x'I \biggm| \ov{a} \in P^{-\eta_1} \right\}.\end{equation*}  Altogether, Lemmas \ref{T:split} and \ref{T:GL_2}, together with the fact that $\ov{a} \in P^{-\lambda_1} \iff a \in P^{-\lambda_1}$, say that if $d=0$, then \begin{equation}\label{E:13d=0} (K_1)_{\leq
\lambda} = \left\{ \begin{pmatrix} a& b& c\\d&e&0\\g&0&0 \end{pmatrix} \in K_1 \biggm| a \in P^{-\lambda_1}\right\}. \end{equation} We thus see that the first slope $\lambda_1$ determines only $\val(a)$, and in this case, $a \in P^{\mu_1+1}$.  In addition, either $\lambda_3 = -\mu_2 \geq -\mu_3 +1$, or $\lambda_3=-\lambda_1-\lambda_2=-\lambda_1+\mu_2 \geq \mu_1+\mu_2+1 = -\mu_3+1$.  Therefore, $\lambda_3 \geq -\mu_3 +1$, whether or not $\lambda_2 = -\mu_2$ or $\lambda_3 = -\mu_2$.  Lemma \ref{T:charpolypo} then indicates that Equation \eqref{E:13gen} still applies.

It remains to show that Equations \eqref{E:13i} and \eqref{E:13d=0} are equivalent under the hypothesis $d=0$.  Let us consider the case in which $\lambda_3 = -\mu_2$.  Then we see that $\lambda_2 \geq \lambda_3 \iff -\lambda_1-\lambda_3 \geq \lambda_3 \iff-\lambda_1 + \mu_2 \geq \lambda_3$.  Therefore, $ae \in P^{-\lambda_1 + \mu_2} \subseteq P^{\lambda_3}$.  In the situation in which $\lambda_2 = -\mu_2$, we have $-\lambda_1-\lambda_2 = \lambda_3 \iff -\lambda_1+\mu_2 = \lambda_3$, so that we again automatically have $ae \in P^{\lambda_3}$.  Therefore, Equations \eqref{E:13i} and \eqref{E:13d=0} are equivalent in the case where $d=0$, as desired.
\end{proof}

For the remaining three cases, we employ arguments similar to those used in the proof of Proposition
\ref{T:13vals}.  We begin by stating the analog of Lemma \ref{T:13zeros} for case IVA.  The proof proceeds in
manner similar to Lemma \ref{T:13zeros} and is thus omitted.

\begin{lemma}\label{T:12zeros}
Let $x = \pi^{\mu}s_1$, where $\mu = (\mu_1,\mu_2,\mu_3)$ satisfies $\mu_1 < \mu_2 \leq \mu_3$.  Define $$ J_2:=
\begin{pmatrix} 1& 0& 0\\P^{\mu_2-\mu_1}&1&0\\P^{\mu_3-\mu_1+1}&0&1 \end{pmatrix}\quad
\text{and}\quad  K_2:= \begin{pmatrix} P^{\mu_1+1}&P^{\mu_1}_{\times} & P^{\mu_1}
\\  P^{\mu_2}_{\times}&0 & P^{\mu_2}\\ P^{\mu_3+1}&0 & P^{\mu_3}_{\times}
\end{pmatrix} .$$  Then the map $\kappa: J_2 \times K_2  \rightarrow xI$ given by $(j,k) \mapsto j^{-1}k\sigma(j)$ is an isomorphism of schemes.
\end{lemma}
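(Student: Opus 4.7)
The plan is to mirror the proof of Lemma \ref{T:13zeros} with a different elimination target. A direct computation of $xI = \pi^{\mu}s_1 I$ gives
$$xI = \begin{pmatrix} P^{\mu_1+1} & P^{\mu_1}_{\times} & P^{\mu_1} \\ P^{\mu_2}_{\times} & P^{\mu_2} & P^{\mu_2} \\ P^{\mu_3+1} & P^{\mu_3+1} & P^{\mu_3}_{\times} \end{pmatrix},$$
so the unique entry of valuation exactly $\mu_1$ in the second column sits at position $(1,2)$, rather than at $(1,3)$ as in Lemma \ref{T:13zeros}. Accordingly the group $J_2$ is set up to clear the $(2,2)$ and $(3,2)$ entries of $jA\sigma(j)^{-1}$, using the entry $A_{12}$ as pivot.

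Given $A=(A_{ij})\in xI(\ov{k})$, I would parametrize a candidate $j \in J_2$ as
$$j=\begin{pmatrix} 1 & 0 & 0 \\ d' & 1 & 0 \\ g' & 0 & 1 \end{pmatrix}$$
and observe that right multiplication by $\sigma(j)^{-1}$ alters only column $1$ of $jA$, while left multiplication by $j$ alters only rows $2$ and $3$. Hence the $(2,2)$ and $(3,2)$ entries of $k := jA\sigma(j)^{-1}$ are $d'A_{12}+A_{22}$ and $g'A_{12}+A_{32}$, respectively. Setting $d' := -A_{22}/A_{12}$ and $g':= -A_{32}/A_{12}$ is then forced, and the valuations $\val(A_{12})=\mu_1$, $\val(A_{22}) \geq \mu_2$, $\val(A_{32}) \geq \mu_3+1$ confirm that the resulting $j$ lies in $J_2$.

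It remains to verify $k \in K_2$. Columns $2$ and $3$ of $k$ coincide with those of $A$ except in rows $2,3$ of column $2$ (killed by construction), so their $P$-power conditions transfer immediately. Column $1$ of $k$ is $A_{*1} - \sigma(d')A_{*2} - \sigma(g')A_{*3}$, and each correction gains at least $\mu_2-\mu_1 > 0$ in valuation, hence lies in a strictly higher power of $P$ than the corresponding entry of $A_{*1}$. Using only $\mu_1<\mu_2\leq\mu_3$, the prescribed $P$-power conditions on column $1$ of $k$ persist. Since the assignment $A \mapsto (j,k)$ is given by the same explicit formulas on $S$-points for any $k$-algebra $S$, it is a morphism inverse to $\kappa$ on points, and thus $\kappa$ is an isomorphism of schemes. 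The main (and only) obstacle is this valuation bookkeeping, more tedious than in Lemma \ref{T:13zeros} but entirely parallel.
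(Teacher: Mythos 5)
Your proposal is correct and follows essentially the same route as the paper, which omits this proof precisely because it "proceeds in manner similar to Lemma \ref{T:13zeros}": you pick the unique $j$ with $d'=-A_{22}/A_{12}$, $g'=-A_{32}/A_{12}$ (pivoting on the entry of exact valuation $\mu_1$) and verify membership in $J_2$ and $K_2$ by valuation bookkeeping, exactly as in the paper's model argument. One small inaccuracy worth fixing: columns $2,3$ of $k$ are those of $jA$, not of $A$ (rows $2,3$ of column $3$ pick up $d'A_{13}$ and $g'A_{13}$), and column $1$ of $k$ is $j$ applied to $A_{*1}-\sigma(d')A_{*2}-\sigma(g')A_{*3}$; these extra terms have valuations at least $\mu_2$ and $\mu_3+1$ respectively, so all the prescribed $P$-power (and exact-valuation) conditions persist and the argument goes through unchanged.
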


\begin{cor}\label{T:K_2cor}
Let $x = \pi^{\mu}s_1$, where $\mu = (\mu_1,\mu_2,\mu_3)$ satisfies $\mu_1 < \mu_2 \leq \mu_3$.  Denote
$(K_2)_{\leq \lambda}:= K_2 \cap (xI)_{\leq \lambda}$.  Then \begin{equation*}\label{E:K_2cor} \codim((K_2)_{\leq
\lambda} \subseteq K_2) = \codim((xI)_{\leq \lambda}\subseteq xI).\end{equation*}
\end{cor}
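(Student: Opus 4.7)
The proof plan is to leverage the scheme isomorphism $\kappa \colon J_2 \times K_2 \xrightarrow{\sim} xI$ established in Lemma \ref{T:12zeros}, exactly as Corollary \ref{T:K_1cor} was deduced from Lemma \ref{T:13zeros}. The essential point is that $\kappa$ is defined by $(j,k) \mapsto j^{-1}k\sigma(j)$, which is a $\sigma$-conjugation operation. Since the Newton slope map $\ov{\nu} \colon G \to \mathcal{N}(G)$ descends through $\sigma$-conjugacy classes (it is constant on the $\sigma$-conjugacy class of any element), the fibers of $\ov{\nu}$ are preserved under this operation; that is, for any $\lambda \in \mathcal{N}(G)_x$ we will have
\begin{equation*}
\kappa^{-1}\bigl((xI)_{\leq \lambda}\bigr) = J_2 \times (K_2)_{\leq \lambda}.
\end{equation*}

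First I would verify this identification set-theoretically, using that $\ov{\nu}(j^{-1}k\sigma(j)) = \ov{\nu}(k)$ for all $(j,k) \in J_2 \times K_2$, together with the definitions $(K_2)_{\leq \lambda} = K_2 \cap (xI)_{\leq \lambda}$ and $(xI)_{\leq \lambda} = \coprod_{\lambda' \leq \lambda}(xI)_{\lambda'}$. Next, I would translate this identification to the level of the finite-dimensional quotients $xI/I^N$ from Section \ref{S:admis}, by choosing $N$ sufficiently large that $(xI)_{\leq \lambda}$ is determined at level $N$ (a fact we will establish via the explicit polynomial descriptions to appear in Propositions IVA, VA, and VIA); then $\kappa$ induces a compatible isomorphism of finite-type affine schemes under which the image of $(K_2)_{\leq \lambda}$ and $(xI)_{\leq \lambda}$ correspond. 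In particular $(K_2)_{\leq \lambda}$ is admissible in $K_2$ as soon as $(xI)_{\leq \lambda}$ is admissible in $xI$.

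Finally, since codimension is preserved under isomorphism of schemes, and since for an admissible product $J_2 \times Y \subseteq J_2 \times K_2$ one has $\codim(J_2 \times Y \subseteq J_2 \times K_2) = \codim(Y \subseteq K_2)$, the chain
\begin{equation*}
\codim\bigl((xI)_{\leq \lambda} \subseteq xI\bigr) = \codim\bigl(J_2 \times (K_2)_{\leq \lambda} \subseteq J_2 \times K_2\bigr) = \codim\bigl((K_2)_{\leq \lambda} \subseteq K_2\bigr)
\end{equation*}
yields the corollary.

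The only mildly delicate point will be the bookkeeping at the finite-dimensional quotient level: one must check that passing through $\kappa$ is compatible with the projections $p_N$ (or $\rho_N$) in the sense that an admissibility level $N$ for $(xI)_{\leq \lambda}$ transports to an admissibility level for $(K_2)_{\leq \lambda}$ inside $K_2$, possibly after enlarging $N$ to absorb the $\sigma$-twist of entries of $j \in J_2$. This is routine given the explicit form of $J_2$, whose entries lie in bounded powers of $P$, so that $\kappa$ and $\kappa^{-1}$ are defined by polynomial expressions involving only finitely many coefficients of the matrix entries; hence no new obstacle arises beyond what was already handled for Corollary \ref{T:K_1cor}.
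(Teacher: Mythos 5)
Your proposal is correct and matches the paper's (implicit) argument: Corollary \ref{T:K_2cor} is deduced from the scheme isomorphism $\kappa$ of Lemma \ref{T:12zeros} exactly as Corollary \ref{T:K_1cor} follows from Lemma \ref{T:13zeros}, using that $\kappa$ acts by $\sigma$-conjugation and hence identifies $(xI)_{\leq \lambda}$ with $J_2 \times (K_2)_{\leq \lambda}$, after which the codimension comparison is immediate on suitable finite-dimensional quotients.
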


\begin{prop}[\textbf{Case IVA}]\label{T:12vals}Let $x=\pi^{\mu}s_1$ satisfy $\mathbf{a}_x \subset C^0$ and $\mu_2 \geq 0$.  Then \linebreak $\mu_1 < \mu_2\leq \mu_3$.  In addition, $\mu_1 < 0$ and $\mu_3 >0$.

Now fix $\lambda=(\lambda_1,\lambda_2,\lambda_3) \in \mathcal{N}(G)_x$.  We then have
\begin{equation}\label{E:12gen} \lambda_1 \leq -\mu_1-\frac{1}{2} \quad \text{and} \quad \lambda_3 =
-\mu_3.\end{equation}  Further, the only possibility in which $\lambda_1 = -\mu_1-\frac{1}{2}$ is for
$\mu=(-1,0,1)$.  Otherwise, $\lambda_1 \leq -\mu_1 - 1$.  Finally, we have the following description of
$(K_2)_{\leq \lambda} := K_2 \cap (xI)_{\leq \lambda}$: \begin{equation}\label{E:12} (K_2)_{\leq \lambda} =
\left\{ \begin{pmatrix} a& b& c\\d&0&f\\g&0&i \end{pmatrix} \in K_2 \biggm| a \in
P^{-\lambda_1}\right\}. \end{equation}
\end{prop}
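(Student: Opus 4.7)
The plan is to follow the template of Proposition \ref{T:13vals} (Case IIIA). First, from $\mathbf{a}_x \subset C^0$, $\mu_2 \geq 0$, and $\mu_1+\mu_2+\mu_3 = 0$, one reads off $\mu_1 < \mu_2 \leq \mu_3$, $\mu_1 < 0$, and $\mu_3 > 0$. Take an arbitrary element $A \in K_2$ with entry valuations as in the definition of $K_2$. Since $e = h = 0$ throughout $K_2$, the expression for $D$ from Proposition \ref{T:Dneq0} collapses to $D = \ov{g}(di - fg)$, so $D \neq 0$ exactly when $g \neq 0$. The analysis therefore splits according to whether $g$ vanishes.

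When $g \neq 0$, we have $\val(D) = \mu_2 + 2\mu_3 + 1$ so $e_1$ is a cyclic vector, and substituting $e = h = 0$ into Equations \eqref{E:alpha} and \eqref{E:beta} collapses them to
\begin{equation*}
\alpha = -\ov{\ov{a}} - \frac{\ov{\ov{g}}\ov{i}}{\ov{g}}, \qquad \beta = \frac{\ov{a}\,\ov{\ov{g}}\ov{i}}{\ov{g}} - \ov{\ov{d}}\ov{b} - \ov{\ov{g}}\ov{c}.
\end{equation*}
The crucial valuation estimate is for $\beta$: the middle summand $\ov{\ov{d}}\ov{b}$ has \emph{exact} valuation $\mu_1+\mu_2$ (using the strict valuations of $b$ and $d$ in $K_2$), while the other two summands both lie in $P^{\mu_1+\mu_3+1}$, strictly deeper because $\mu_2 \leq \mu_3$. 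Hence $\val(\beta) = \mu_1+\mu_2 = -\mu_3$ exactly, so the Newton polygon has $(2,\mu_3)$ as a vertex, forcing $\lambda_3 = -\mu_3$ for every $\lambda \in \mathcal{N}(G)_x$. For $\alpha$, the summand $\ov{\ov{g}}\ov{i}/\ov{g}$ has exact valuation $\mu_3$; any $\lambda \in \mathcal{N}(G)_x$ satisfies $\lambda_1 \geq \lambda_3 = -\mu_3$, so $-\lambda_1 \leq \mu_3$ and that summand lies automatically in $P^{-\lambda_1}$. Combined with Lemma \ref{T:charpolypo}, this yields $\alpha \in P^{-\lambda_1} \iff a \in P^{-\lambda_1}$, which is Equation \eqref{E:12}.

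For the degenerate case $g = 0$, the subspace $\langle e_1, e_2\rangle$ becomes $\Phi$-stable, producing a short exact sequence of isocrystals with one-dimensional quotient $(F, i\sigma)$ (slope $-\mu_3$) and two-dimensional subobject $\bigl(F^2, \bigl(\begin{smallmatrix} a & b \\ d & 0\end{smallmatrix}\bigr)\sigma\bigr)$. Lemma \ref{T:split} splits this sequence, and Lemma \ref{T:GL_2} (cyclic vector $e_1$ exists since $d \neq 0$) shows the stratum for the two-dimensional piece is cut out by $a \in P^{-\eta_1}$; one checks that the resulting sorted three-dimensional slope sequence has $\lambda_3 = -\mu_3$ and $\lambda_1 = \eta_1$, so Equation \eqref{E:12} holds here as well. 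Finally, the bound $\lambda_1 \leq -\mu_1 - \tfrac12$ comes from computing the upper convex hull of $(0,0), (1, -\mu_1-1), (2, \mu_3), (3, 0)$: the maximal $\nu_{x,1}$ equals $-\mu_1-1$ when $\mu_3 \leq -2\mu_1-2$ and $\mu_3/2$ otherwise. Under the constraints $\mu_2 \geq 0$ and $\mu_1 \leq -1$, one has $\mu_3 \leq -\mu_1 \leq -2\mu_1-1$, forcing $\nu_{x,1} \leq -\mu_1 - \tfrac12$; equality pins down $\mu_3 = -\mu_1 = -2\mu_1-1$, hence $\mu = (-1,0,1)$. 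The main obstacle is this half-integer bookkeeping, together with the verification that the $g = 0$ locus contributes no Newton stratum incompatible with the $g \neq 0$ description.
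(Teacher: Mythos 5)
Your proposal is correct and takes essentially the same route as the paper's proof: the same reduction to $K_2$ with the $g \neq 0$ / $g = 0$ dichotomy, the same collapsed expressions for $\alpha$ and $\beta$, the exact valuation $\val(\beta) = \mu_1+\mu_2 = -\mu_3$ pinning down $\lambda_3$, Lemma \ref{T:charpolypo} reducing \eqref{E:12} to $\alpha \in P^{-\lambda_1} \iff a \in P^{-\lambda_1}$, and Lemmas \ref{T:split} and \ref{T:GL_2} for the $g = 0$ locus. The only differences are cosmetic: your claim $\val(D) = \mu_2 + 2\mu_3 + 1$ should be an inequality $\geq$ (since $\val(g) \geq \mu_3+1$ need not be exact), which is harmless because only $D \neq 0$ is used, and you obtain the bound $\lambda_1 \leq -\mu_1 - \tfrac{1}{2}$ with the $\mu = (-1,0,1)$ exception by an explicit convex-hull computation where the paper argues directly from $a \in P^{\mu_1+1}$ and treats $\mu=(-1,0,1)$ as a special case.
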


\begin{proof}
Consider an element
\begin{equation*}\label{E:12vals} A =
\begin{pmatrix}a&b&c\\d&0&f\\g&0&i\end{pmatrix} \in \begin{pmatrix} P^{\mu_1+1}&P^{\mu_1}_{\times} & P^{\mu_1}
\\  P^{\mu_2}_{\times}&0 & P^{\mu_2}\\ P^{\mu_3+1}&0 & P^{\mu_3}_{\times}
\end{pmatrix}=K_2.\end{equation*}  Here we see that $D = \ov{g}\begin{vmatrix}d&f\\g&i \end{vmatrix}$, so that $D\neq
0$ if $g \neq 0$.

Assume first that $g \neq 0$ so that $e_1$ is a cyclic vector for $(F^3,A\sigma)$. Using that $e=h=0$, Equations
\eqref{E:alpha} and \eqref{E:beta} reduce to \begin{equation*} \alpha = -\ov{\ov{a}} -
\frac{\ov{\ov{g}}\ov{i}}{\ov{g}}, \quad \beta = \frac{\ov{\ov{g}}\ov{ai}}{\ov{g}} - \ov{\ov{d}}\ov{b} -
\ov{\ov{g}}\ov{c}. \end{equation*}  First, observe that $\ov{\ov{g}}\ov{i}/ \ov{g} \in \mathcal{O},$ since $\mu_3
\geq 0$.  In the special case $\mu = (-1,0,1)$, we have $\alpha \in \mathcal{O}$, so that $\ov{\ov{a}} \in
P^{-\lambda_1}$ automatically. For all other $\mu$ satisfying the hypotheses of this proposition, we see that
$\alpha \in P^{-\lambda_1} \iff \ov{\ov{a}} \in P^{-\lambda_1} \iff a \in P^{-\lambda_1}$.  In addition, if $\mu \neq (-1,0,1)$ note that
$\lambda_1 \leq -\mu_1-1$, since $a \in P^{\mu_1+1}$.

Now, comparing valuations of the summands of $\beta$, we have \begin{equation*}
\frac{\ov{\ov{g}}\ov{ai}}{\ov{g}} \in P^{\mu_1+\mu_3+1}, \quad \ov{\ov{d}}\ov{b} \in P^{\mu_1+\mu_2}_{\times},
\quad \ov{\ov{g}}\ov{c} \in P^{\mu_1+\mu_3+1}. \end{equation*}  Since $\mu_2 \leq \mu_3$, we observe that
$\val(\beta) = \mu_1+\mu_2 = -\mu_3$.  In the special case $\mu=(-1,0,1)$, we see that the only
possibility is that $\lambda = (\frac{1}{2},\frac{1}{2},-1).$  Equation \eqref{E:12gen} consequently holds.  We
have now shown that $\lambda$ determines only $\val(a)$, and Lemma \ref{T:charpolypo} implies the result if
$g\neq 0$.

\vskip 5 pt

If $g=0$, we have the following split exact sequence: \begin{equation*} 0 \longrightarrow \left( F^2,
\begin{pmatrix}a&b\\d&0\end{pmatrix}\sigma \right) \longrightarrow (F^3, A\sigma) \longrightarrow (F, i\sigma)
\rightarrow 0\ .\end{equation*}  Observe that $\val(i) = \mu_3$, so we need only understand the conditions
on the two-dimensional sub-isocrystal.  Since $d \neq 0$, we again use Lemma \ref{T:GL_2} to see that
$\lambda_1$ determines only $\val(a)$, as desired.  Finally, since $a \in P^{\mu_1+1}$, Lemma
\ref{T:charpolypo} indicates that the inequalities in \eqref{E:12gen} still hold.
\end{proof}

\begin{lemma}\label{T:23zeros}
Let $x = \pi^{\mu}s_2$, where $\mu = (\mu_1,\mu_2,\mu_3)$ satisfies $\mu_1 \leq \mu_2 < \mu_3$.  Define $$ J_3:=
\begin{pmatrix} 1& 0& 0\\ 0&1&0\\P^{\mu_3-\mu_1+1}&P^{\mu_3-\mu_2}&1 \end{pmatrix}\quad
\text{and}\quad  K_3:=  \begin{pmatrix} P^{\mu_1}_{\times}&P^{\mu_1} & P^{\mu_1} \\  P^{\mu_2+1}&P^{\mu_2+1} &
P^{\mu_2}_{\times}\\ 0&P^{\mu_3}_{\times} & 0 \end{pmatrix} .$$  Then the map $\kappa: J_3 \times K_3
\rightarrow xI$ given by $(j,k) \mapsto j^{-1}k\sigma(j)$ is an isomorphism of schemes.
\end{lemma}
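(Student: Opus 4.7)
The plan is to follow the template established in Lemmas \ref{T:13zeros} and \ref{T:12zeros}: build an explicit inverse to $\kappa$ by solving for the unique $j \in J_3$ that conjugates a given $A \in xI$ into $K_3$, and verify the construction on $\ov{k}$-points (with the argument extending verbatim to $S$-points for any ring $S$). First I would compute $xI$ concretely: using the permutation matrix for $s_2$, one sees that
\begin{equation*}
xI = \begin{pmatrix} P^{\mu_1}_{\times} & P^{\mu_1} & P^{\mu_1} \\ P^{\mu_2+1} & P^{\mu_2+1} & P^{\mu_2}_{\times} \\ P^{\mu_3+1} & P^{\mu_3}_{\times} & P^{\mu_3} \end{pmatrix}.
\end{equation*}
Thus passing from $xI$ to $K_3$ amounts to killing the $(3,1)$ and $(3,3)$ entries via conjugation by $j$, while the $(3,2)$ entry is free.

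Next I would write
\begin{equation*}
j = \begin{pmatrix} 1 & 0 & 0 \\ 0 & 1 & 0 \\ g' & h' & 1 \end{pmatrix}, \qquad A = \begin{pmatrix} a & b & c \\ d & e & f \\ g & h & i \end{pmatrix},
\end{equation*}
and compute $jA\sigma(j)^{-1}$ directly. Setting the $(3,3)$ entry to zero gives $g'c + h'f + i = 0$; substituting back, the $(3,1)$ entry collapses to $g'a + h'd + g = 0$. So $(g',h')$ must satisfy the linear system
\begin{equation*}
\begin{pmatrix} c & f \\ a & d \end{pmatrix}\begin{pmatrix} g' \\ h' \end{pmatrix} = \begin{pmatrix} -i \\ -g \end{pmatrix}.
\end{equation*}
Since $\val(af) = \mu_1 + \mu_2$ and $\val(cd) \geq \mu_1 + \mu_2 + 1$, the determinant $cd - af$ has valuation exactly $\mu_1 + \mu_2$ and is therefore invertible in $F$. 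Cramer's rule then yields unique solutions
\begin{equation*}
g' = \frac{fg - di}{cd - af}, \qquad h' = \frac{ai - cg}{cd - af},
\end{equation*}
and a quick valuation count gives $\val(g') \geq \mu_3 - \mu_1 + 1$ and $\val(h') = \mu_3 - \mu_2$, so $j$ lies in $J_3$.

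Finally I would check that the resulting matrix $k := jA\sigma(j)^{-1}$ actually lands in $K_3$ by computing the remaining entries and verifying their valuations (the only nontrivial checks are that $(1,1)$ and $(2,3)$ retain units and that the $(3,2)$ entry $g'b + h'e + h$ has valuation exactly $\mu_3$; all follow from $\val(h) = \mu_3$ dominating $\val(g'b) \geq \mu_3 + 1$ and $\val(h'e) \geq \mu_3 + 1$). This constructs a morphism $\iota: xI \to J_3 \times K_3$, $A \mapsto (j, jA\sigma(j)^{-1})$, and a short calculation confirms $\kappa \circ \iota = \mathrm{id}$ and $\iota \circ \kappa = \mathrm{id}$. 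The main (and only) obstacle is the bookkeeping of valuations; there are no conceptual surprises because the hypothesis $\mu_1 \leq \mu_2 < \mu_3$ ensures that $cd - af$ has the smallest possible valuation and that $\mu_3 \geq \mu_2 + 1$, which is exactly what is needed to force the entries of $j$ into $J_3$ and the entries of $k$ into $K_3$.
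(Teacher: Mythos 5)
Your proposal is correct and follows essentially the same route as the paper, which proves the analogous Lemma \ref{T:13zeros} by exhibiting the unique unipotent $j$ that $\sigma$-conjugates a given $A$ into the zero-pattern subscheme and omits the remaining cases as similar; your explicit solution of the $2\times 2$ linear system for $(g',h')$ via the determinant $cd-af$ of exact valuation $\mu_1+\mu_2$, together with the valuation checks placing $j$ in $J_3$ and $jA\sigma(j)^{-1}$ in $K_3$, is precisely that argument adapted to $w=s_2$. One nitpick: $\val(h')=\mu_3-\mu_2$ need not hold exactly (e.g.\ when $\val(i)>\mu_3$), but only the inequality $\val(h')\geq \mu_3-\mu_2$ is needed for membership in $J_3$, so the conclusion is unaffected.
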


\begin{cor}\label{T:K_3cor}
Let $x = \pi^{\mu}s_2$, where $\mu = (\mu_1,\mu_2,\mu_3)$ satisfies $\mu_1 \leq \mu_2 < \mu_3$.  Denote
$(K_3)_{\leq \lambda}:= K_3 \cap (xI)_{\leq \lambda}$.  Then \begin{equation*}\label{E:K_3cor} \codim((K_3)_{\leq
\lambda} \subseteq K_3) = \codim((xI)_{\leq \lambda}\subseteq xI).\end{equation*}
\end{cor}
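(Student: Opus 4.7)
The plan is to deduce this corollary as a formal consequence of Lemma \ref{T:23zeros}, exactly as Corollary \ref{T:K_1cor} follows from Lemma \ref{T:13zeros}. The key observation is that the isomorphism $\kappa$ is compatible with $\sigma$-conjugation in a way that makes Newton strata correspond to products with $J_3$, after which codimensions transfer trivially.

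First, I would unpack what the map $\kappa(j,k) = j^{-1}k\sigma(j)$ does on the level of $\sigma$-conjugacy classes. Setting $a := j^{-1}$, one has $\sigma(a)^{-1} = \sigma(j)$, so $\kappa(j,k) = a k \sigma(a)^{-1}$, which exhibits $\kappa(j,k)$ as $\sigma$-conjugate to $k$. Consequently $\ov{\nu}(\kappa(j,k)) = \ov{\nu}(k)$, and therefore
\begin{equation*}
\kappa^{-1}\!\left((xI)_{\leq \lambda}\right) = J_3 \times (K_3)_{\leq \lambda}.
\end{equation*}
Since $\kappa$ is an isomorphism of schemes by Lemma \ref{T:23zeros}, we obtain an induced isomorphism $(xI)_{\leq \lambda} \cong J_3 \times (K_3)_{\leq \lambda}$.

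Next, I would transfer this set-theoretic statement into a codimension statement using the admissibility framework from Section \ref{S:admis}. Choose $N$ large enough that $(xI)_{\leq \lambda} = p_N^{-1}p_N (xI)_{\leq \lambda}$; then after pushing forward through $\kappa$, the set $J_3 \times (K_3)_{\leq \lambda}$ is admissible in $J_3 \times K_3$, and the product structure together with the fact that $J_3$ is the whole first factor yields
\begin{equation*}
\codim\bigl(J_3 \times (K_3)_{\leq \lambda} \subseteq J_3 \times K_3\bigr) = \codim\bigl((K_3)_{\leq \lambda} \subseteq K_3\bigr).
\end{equation*}
Combining this with the fact that $\kappa$ is an isomorphism of schemes gives the claimed equality of codimensions.

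The main obstacle, such as there is one, is purely bookkeeping: verifying that the notions of admissibility and codimension on $xI$ and on $J_3 \times K_3$ really do correspond under $\kappa$, i.e.\ that the surjections $p_N$ on $xI$ and the analogous truncation maps on $J_3 \times K_3$ are compatible. This requires only observing that $J_3$ is defined by congruence conditions modulo the same powers of $\pi$ that cut out $I^N$ in the first factor, so truncation commutes with $\kappa$. Once this is checked the corollary is immediate, and the argument is identical in spirit to the one already outlined for Corollary \ref{T:K_1cor}; I would therefore present it concisely by appealing to Lemma \ref{T:23zeros} and the preservation of $\sigma$-conjugacy classes under $\kappa$, omitting the routine admissibility verification.
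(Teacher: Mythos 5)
Your argument is correct and is exactly the paper's (largely implicit) reasoning: the isomorphism $\kappa$ of Lemma \ref{T:23zeros} sends $(j,k)$ to an element $\sigma$-conjugate to $k$, so $\kappa^{-1}\bigl((xI)_{\leq\lambda}\bigr)=J_3\times (K_3)_{\leq\lambda}$, and the codimension then transfers through the product structure together with the admissibility/truncation bookkeeping, just as for Corollary \ref{T:K_1cor}.
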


\begin{prop}[\textbf{Case VA}]\label{T:23vals}Let $x=\pi^{\mu}s_2$ satisfy $\mathbf{a}_x \subset C^0$ and $\mu_2 \geq 0$. Then  $\mu_1 < \mu_2< \mu_3$.  In addition, $\mu_1 < 0$ and $\mu_3 >0$.

Now fix $\lambda=(\lambda_1,\lambda_2,\lambda_3) \in \mathcal{N}(G)_x$. We then have \begin{equation}\label{E:23gen} \lambda_1 = -\mu_1 \quad \text{and} \quad \lambda_3 \geq -\mu_3 + \frac{1}{2}.\end{equation} Further, the only possibility in which $\lambda_3 = -\mu_3 +\frac{1}{2}$ is for $\mu_2+1=\mu_3$.  Otherwise, $\lambda_3 \geq -\mu_3 +1$. In describing $(K_3)_{\leq \lambda} := K_3 \cap (xI)_{\leq \lambda}$, we have the following two subcases:
\begin{enumerate}
\item[($i$)] If $\mu_2+1 = \mu_3$, then $\lambda = (-\mu_1, \frac{\mu_1}{2}, \frac{\mu_1}{2})$ and $(K_3)_{\leq \lambda} = K_3.$
\item[($ii$)] If $\mu_2+1 < \mu_3$, then \begin{equation}\label{E:23} (K_3)_{\leq \lambda} = \left\{ \begin{pmatrix} a& b& c\\d&e&f\\0&h&0 \end{pmatrix} \in K_3 \biggm| \begin{vmatrix} a & b\\ d& e\end{vmatrix} \in P^{\lambda_3}\right\}. \end{equation}
\end{enumerate}
\end{prop}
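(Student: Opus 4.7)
The proof will parallel the strategy used in the preceding case propositions, especially Proposition \ref{T:13vals}. By Corollary \ref{T:K_3cor} it suffices to describe $(K_3)_{\leq\lambda}$. The first step is to compute $D$ for a typical element of $K_3$: since $g=i=0$, the formula from Proposition \ref{T:Dneq0} collapses to $D = \overline{d}\cdot dh$, so $D\neq 0$ if and only if $d\neq 0$. This naturally splits the analysis into the cases $d\neq 0$ and $d=0$, with the former being the main case.

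In the generic case $d\neq 0$, the vector $e_1$ is cyclic, so Equations \eqref{E:alpha} and \eqref{E:beta} apply. Substituting $g=i=0$ yields
\begin{equation*}
\alpha = -\overline{\overline{a}} - \frac{\overline{\overline{d}}\,\overline{e}}{\overline{d}},\qquad
\beta = \frac{\overline{\overline{d}}}{\overline{d}}\,\overline{\begin{vmatrix}a&b\\ d&e\end{vmatrix}} - \frac{\overline{\overline{d}}\,\overline{h}\,f}{d}.
\end{equation*}
Since $\val(\overline{\overline{d}}\,\overline{e}/\overline{d})\geq \mu_2+1>\mu_1=\val(a)$, one gets $\val(\alpha)=\mu_1$ exactly, so by Lemma \ref{T:charpolypo} the first slope is rigidly $\lambda_1=-\mu_1$. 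For $\beta$, I would compare the two summands: the determinant term has valuation $\geq \mu_1+\mu_2+1$, while the second term has valuation exactly $\mu_2+\mu_3$. In subcase ($ii$), $\mu_2+1<\mu_3$ forces $\mu_2+\mu_3>\mu_1+\mu_2+1$, so the second term is negligible and lies in $P^{\mu_2+\mu_3}\subseteq P^{\lambda_3}$ (since $\lambda_3\leq 0$); hence $\beta\in P^{\lambda_3}$ is governed by the determinant, yielding Equation \eqref{E:23}.

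Subcase ($i$), where $\mu_2+1=\mu_3$, is the most interesting point and the place I expect subtlety. Here $\mu_1=1-2\mu_3$, and both $\val(\alpha)=\mu_1$ and the lower bound $\val(\beta)\geq \mu_1+\mu_2+1=1-\mu_3$ are forced by the shape of $K_3$. I would then draw the Newton polygon for the points $(0,0),(1,-\mu_1),(2,\mu_3-1),(3,0)$ and check that the middle point lies strictly below the segment joining $(1,-\mu_1)$ to $(3,0)$. This forces the polygon to bypass the middle point and produces the half-integer slope sequence $(-\mu_1,\mu_1/2,\mu_1/2)$ for every matrix in $K_3$, so $(K_3)_{\leq\lambda}=K_3$.

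Finally, in the exceptional case $d=0$, the first column becomes $(a,0,0)^t$, so $(Fe_1,a\sigma)$ is a sub-isocrystal of $(F^3,A\sigma)$. Lemma \ref{T:split} splits the short exact sequence with quotient $(F^2,\begin{pmatrix}e&f\\ h&0\end{pmatrix}\sigma)$, whose $(2,1)$-entry $h$ has $\val(h)=\mu_3\neq 0$, so Lemma \ref{T:GL_2} applies and the Newton slope sequence of the quotient can be read off directly. I expect the main obstacle to lie in verifying that the resulting description agrees, entry by entry, with the polynomial conditions obtained in Case 1 — that is, that the determinant condition $\det\begin{pmatrix}a&b\\ d&e\end{pmatrix}\in P^{\lambda_3}$ is automatically satisfied when $d=0$, given the constraints imposed on $\lambda_1,\lambda_3$ by the valuations in $K_3$. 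This reconciliation is analogous to the concluding paragraph of the proof of Proposition \ref{T:13vals} and should proceed by the same type of valuation bookkeeping, using that $\val(a)=\mu_1$ and $\val(e)\geq \mu_2+1$ together force $ae\in P^{-\lambda_1+\mu_2+1}\subseteq P^{\lambda_3}$ once the range of $\lambda_3$ is taken into account.
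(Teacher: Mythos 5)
Your analysis of the generic case $d\neq 0$ is correct and is essentially the paper's argument: $D=\ov{d}dh$, your reduced formulas for $\alpha$ and $\beta$ agree with the paper's (your second term $\ov{\ov{d}}\ov{h}f/d$ is exactly $\tfrac{\ov{D}}{D}fh$), the exact valuation $\val(\alpha)=\mu_1$ pins $\lambda_1=-\mu_1$, the $fh$ term has valuation $\mu_2+\mu_3>0\geq\lambda_3$ so only the determinant governs $\beta\in P^{\lambda_3}$, and the Newton-polygon picture when $\mu_2+1=\mu_3$ forces $\lambda=(-\mu_1,\tfrac{\mu_1}{2},\tfrac{\mu_1}{2})$. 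Discarding the $fh$ term once and for all, rather than splitting into the ranges $\lambda_3\leq\mu_1+\mu_3$ and $\lambda_3>\mu_1+\mu_3$ as the paper does, is a legitimate small shortcut for establishing \eqref{E:23}.

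The $d=0$ case, however, contains a genuine error. You claim the determinant condition is \emph{automatically} satisfied when $d=0$, via $ae\in P^{-\lambda_1+\mu_2+1}\subseteq P^{\lambda_3}$. Since $\lambda_1=-\mu_1$, that exponent is $\mu_1+\mu_2+1=-\mu_3+1$, so the inclusion requires $\lambda_3\leq-\mu_3+1$; but the proposition must hold for every $\lambda\in\mathcal{N}(G)_x$, and in subcase ($ii$) $\lambda_3$ ranges strictly above $-\mu_3+1$ (up to $\tfrac{\mu_1}{2}$). Concretely, a matrix in $K_3$ with $d=0$, $\val(a)=\mu_1$ and $\val(e)=\mu_2+1$ has, by the split exact sequence and Lemma \ref{T:GL_2}, Newton slopes $(-\mu_1,-\mu_2-1,-\mu_3+1)=\nu_x$, so it lies in no $(K_3)_{\leq\lambda}$ with $\lambda<\nu_x$: the locus $d=0$ is not contained in every closed stratum, and the determinant condition is not vacuous there. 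The correct reconciliation (the one the paper makes) is the reverse: apply Lemma \ref{T:GL_2} to the quotient $\left(F^2,\begin{pmatrix}e&f\\h&0\end{pmatrix}\sigma\right)$ to see that the comparison with $\lambda$ constrains only $\val(e)$, namely $e\in P^{\lambda_3-\mu_1}$, which is equivalent to $ae\in P^{\lambda_3}$ because $\val(a)=\mu_1$ exactly — and this \emph{is} the determinant condition of \eqref{E:23}, since the determinant equals $ae$ when $d=0$; one then checks \eqref{E:23gen} still holds. The analogy you draw with the end of Proposition \ref{T:13vals} is what misleads you: there the quotient forces a slope equal to $-\mu_2$ and the $GL_2$ condition constrains only $\val(a)$, making the determinant condition redundant; here it is $\lambda_1$ that is pinned, and the $GL_2$ condition on the two-dimensional quotient is precisely the nontrivial determinant condition.
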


\begin{proof}
Let $A \in K_3$.  Then we may write \begin{equation*}\label{E:23vals} A =
\begin{pmatrix}a&b&c\\d&e&f\\0&h&0\end{pmatrix} \in \begin{pmatrix} P^{\mu_1}_{\times}&P^{\mu_1} & P^{\mu_1} \\
P^{\mu_2+1}&P^{\mu_2+1} & P^{\mu_2}_{\times}\\ 0&P^{\mu_3}_{\times} & 0 \end{pmatrix},\end{equation*} and in this
context, $D = \ov{d}dh$.  Therefore, $e_1$ is a cyclic vector if $d \neq 0$.

First assume that $d\neq 0$ so that Equations \eqref{E:alpha} and \eqref{E:beta} hold.  Using that $g = i = 0$,
these equations reduce to
\begin{equation*} \alpha = -\ov{\ov{a}} - \frac{\ov{\ov{d}}\ov{e}}{\ov{d}}, \quad \beta =
\frac{\ov{\ov{d}}\ov{ae}}{\ov{d}} - \ov{\ov{d}}\ov{b}
-\frac{\ov{D}}{D}fh.
\end{equation*} First, observe that $\ov{\ov{d}}\ov{e}/ \ov{d} \in \mathcal{O},$ since $\mu_2 \geq 0$.  Thus,
$\alpha \in P^{-\lambda_1} \iff \ov{\ov{a}} \in P^{-\lambda_1} \iff a \in P^{-\lambda_1}$.  However, $a \in P^{\mu_1}_{\times}$, so the
only possibility is that $\lambda_1=-\mu_1.$

Similarly, we can compute that
\begin{equation*}
\frac{\ov{\ov{d}}\ov{ae}}{\ov{d}} \in  P^{\mu_1+\mu_2+1}, \quad -\ov{\ov{d}}\ov{b}\in P^{\mu_1+\mu_2+1},
\quad -\frac{\ov{D}}{D}fh \in P^{\mu_2+\mu_3}_{\times}\subset \mathcal{O}. \end{equation*}
We now consider the two subcases defined in the statement of the proposition.

\vskip 10 pt Subcase ($i$): $\mu_2+1 = \mu_3$ \vskip 10 pt

If $\mu_2+1 = \mu_3$, note that $\mu_1+\mu_2 + 1 > \frac{\mu_1}{2}$.  Consequently, $\lambda_3 = \frac{\mu_1}{2}$ is fixed.  But because we also know that $\lambda_1 = -\mu_1$, we thus see that $\lambda = (-\mu_1, \frac{\mu_1}{2},\frac{\mu_1}{2})$ is the only possible value for $\lambda$ in this subcase. That is,  $\mathcal{N}(G)_x$ consists of a single element, and so $(K_3)_{\leq \lambda} = K_3$.

\vskip 10 pt Subcase ($ii$): $\mu_2+1 < \mu_3$ \vskip 10 pt

We first observe that $\beta \in P^{\mu_1+\mu_2+1}$ so that if $\mu_2+1 < \mu_3$, then $\lambda_3 \geq -\mu_3+1.$  In particular, we have verified Equation \eqref{E:23gen}.  Comparing the valuations of the summands of $\beta$, it appears that we should consider two further subcases:
\begin{enumerate}
\item[($a$)] $\mu_1+\mu_2+1 \leq \lambda_3\leq \mu_1+\mu_3$,
\item[($b$)] $\mu_1+\mu_3 < \lambda_3$.
\end{enumerate}
First, we argue that subcase ($b$) does not actually arise.  By Lemma \ref{T:irrelterms}, we know that $\mu_2-\lambda_1\geq \lambda_3$ for this range of $\lambda_3$.  In addition, recall that by our analysis of $\val(\alpha)$, we always have that $-\lambda_1 = \mu_1$.  Thus, $\mu_2-\lambda_1 = \mu_1 + \mu_2 < \mu_1 + \mu_3 < \lambda_3$, which is a contradiction.

If $\mu_1+\mu_2+1 \leq \lambda_3\leq \mu_1+\mu_3$, we see that $\ds \beta \in P^{\lambda_3} \iff
\frac{\ov{\ov{d}}\ov{a}\ov{e}}{\ov{d}} - \ov{\ov{d}}\ov{b} \in P^{\lambda_3} \iff \ov{\ov{d}}\ov{a}\ov{e} -
\ov{\ov{d}}\ov{d}\ov{b} \in P^{\lambda_3 + \val(d)} \iff \ov{\begin{vmatrix} a&b\\d&e \end{vmatrix}} \in
P^{\lambda_3} \iff \begin{vmatrix} a&b\\d&e \end{vmatrix} \in
P^{\lambda_3}$.  Equation \eqref{E:23} then follows by Lemma \ref{T:charpolypo}.

\vskip 5 pt

If, on the other hand $d=0$, we have the following split exact sequence: \begin{equation*} 0 \longrightarrow
 (F, a\sigma) \longrightarrow (F^3, A\sigma) \longrightarrow \left( F^2,
\begin{pmatrix}e&f\\h&0\end{pmatrix}\sigma \right)
\rightarrow 0\ .\end{equation*}  Observe that $\val(a) = \mu_1$, so we need only understand the conditions
on the two-dimensional sub-isocrystal.  Since $h \neq 0$, we again use Lemma \ref{T:GL_2} to see that
$\lambda_3$ determines only $\val(e)$.  In particular, we must have $e \in P^{\lambda_3-\mu_1}
\iff ae \in P^{\lambda_3}$, since $\val(a) = \mu_1$.  Therefore, when $d=0$, Equation \eqref{E:23}
holds. Finally, since $ae \in P^{-\mu_3+1}$, Lemma \ref{T:charpolypo} indicates that the inequalities in
\eqref{E:23gen} are still true.
\end{proof}

The same methods used in the previous propositions in this subsection can be employed in the case where $x$ is a pure translation to show that $\val(\alpha) = \mu_1$ and $\val(\beta) = -\mu_3$; \textit{i.e.}, the set $\mathcal{N}(G)_x$ consists of a single Newton slope sequence.  Alternatively, we can cite the following more general result of \cite{GHKRadlvs}.

\begin{theorem}\label{T:translation}
Let $\mu \in X_*(T)$.  Then any element of $I\pi^{\mu}I$ is $\sigma$-conjugate under $I$ to $\pi^{\mu}$.
\end{theorem}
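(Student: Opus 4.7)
The plan is to prove the theorem via a Hensel-style successive approximation argument inside $I$, iteratively $\sigma$-conjugating $g$ closer and closer to $\pi^{\mu}$. Given $g \in I\pi^{\mu}I$, I first want to use the Iwahori decomposition $I = U^+ \cdot T(\mathcal{O}) \cdot U^-$ (in terms of affine root subgroups with respect to the base alcove $\mathbf{a}_1$) to write $g = h_0 \pi^{\mu} \sigma(h_0)^{-1} r_0$ where $h_0 \in I$ is chosen to absorb an initial ``upper triangular'' piece, and $r_0 \in I$ is a remainder lying in a level subgroup $I^{N_0}$ of $I$.

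Next I would iterate: suppose $g_n := h_n^{-1} g \sigma(h_n) = \pi^{\mu}(1+\epsilon_n)$ with $\epsilon_n$ lying in $\pi^{N_n} \cdot M_3(\mathcal{O})$ for a growing sequence $N_n \to \infty$. Setting $h_{n+1} = h_n k_n$ with $k_n = 1+\delta_n$ close to the identity, a direct expansion gives
\begin{equation*}
h_{n+1}^{-1} g \sigma(h_{n+1}) = \pi^{\mu}\bigl(1 + \epsilon_n - \mathrm{Ad}(\pi^{-\mu})\delta_n + \sigma(\delta_n) + O(\delta_n^2,\, \delta_n \epsilon_n)\bigr).
\end{equation*}
So the correction step reduces to solving the linearized equation $\sigma(\delta_n) - \mathrm{Ad}(\pi^{-\mu})\delta_n = -\epsilon_n$ in $\mathrm{Lie}(I)$, modulo the appropriate deeper filtration piece.

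The key step is showing this linearized equation is solvable within $\mathrm{Lie}(I)$. Decomposing according to the affine root space decomposition, on the torus piece the equation becomes $\sigma(t) - t = -\epsilon$ on $\mathcal{O}$, which is surjective by Lang--Steinberg over the algebraically closed residue field $\overline{k}$ combined with $\pi$-adic approximation. On each affine root subgroup $U_{\alpha}$, conjugation by $\pi^{-\mu}$ acts as a shift in the affine root direction (multiplication by $\pi^{-\langle \mu, \dot{\alpha}\rangle}$), and the operator $\sigma - \pi^{-\langle\mu,\dot{\alpha}\rangle}\cdot$ is again surjective on the relevant $\pi$-adic filtration piece, by the same combination of Lang--Steinberg and successive $\pi$-adic lifting.

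The hardest part, in my view, is the bookkeeping to ensure the correction $\delta_n$ can be chosen in $\mathrm{Lie}(I)$ itself (not merely in some larger lattice): for $\mu$ not antidominant, $\mathrm{Ad}(\pi^{-\mu})$ pushes certain affine root components out of $\mathrm{Lie}(I)$, so the filtration indices on each root space must be tracked carefully to confirm that the level of the correction matches or improves the level of $\epsilon_n$ uniformly. Once this is established root-by-root, the products $h_n \to h \in I$ converge in the $\pi$-adic topology on $I$, and the limit satisfies $h^{-1}g\sigma(h) = \pi^{\mu}$, giving the desired $\sigma$-conjugation under $I$.
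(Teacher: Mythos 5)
You should first note that the paper does not prove this statement at all: it quotes it as a result of \cite{GHKRadlvs}, and for its own purposes it only needs the weaker fact that $\mathcal{N}(G)_x$ is a singleton when $x=\pi^{\mu}$, which it observes follows from the same characteristic-polynomial valuation computations as in Section \ref{S:valpolysA}. So your attempt has to be measured against the standard argument in the cited source, and in spirit it is that argument: successive $\sigma$-conjugation, the decomposition of $I$ into the torus part and affine root subgroups, the contraction/expansion of $\mathrm{Ad}(\pi^{-\mu})$ on each root space, and Lang--Steinberg over $\ov{k}$ where $\mathrm{Ad}(\pi^{-\mu})$ preserves levels. Your observation that on expanding root spaces the correction $\delta_n$ gains a level (so that $\mathrm{Ad}(\pi^{-\mu})\delta_n$ stays integral and the quadratic errors stay deep) is exactly the bookkeeping the proof requires.

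The genuine gap is your opening step. You assert that one can write $g=h_0\pi^{\mu}\sigma(h_0)^{-1}r_0$ with $r_0\in I^{N_0}$ by ``absorbing an upper triangular piece,'' but this reduction is the heart of the theorem, not a normalization. After $\sigma$-conjugating by $i_1^{-1}$ you may assume $g=\pi^{\mu}j$ with $j\in I$ arbitrary, and then the diagonal part of $j$ (a unit in $T(\mathcal{O})$) and the level-zero parts of the positive affine root groups are not small; for them the linearized iteration you set up does not apply, since its convergence needs the error terms $O(\delta_n^2,\delta_n\epsilon_n)$ to be strictly deeper than $\epsilon_n$, which fails at level zero. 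To remove them one must argue multiplicatively along the filtration of $I$ by its affine root subgroups (with the quotient $T(\ov{k})$ on top), whose graded pieces are abelian so that the linearization is exact there: the torus quotient is handled by Lang's theorem over $\ov{k}$ --- precisely where $\sigma$-conjugacy and the algebraic closedness of the residue field are indispensable, the statement being false for ordinary conjugacy --- the level-zero root pieces with $\langle\alpha,\mu\rangle=0$ by Artin--Schreier-type surjectivity, and the pieces with $\langle\alpha,\mu\rangle\neq 0$ by your geometric-series device; commutator errors are deeper in this filtration, and only after finitely many such sweeps does one land in a congruence subgroup, at which point your iteration applies verbatim. With that base case supplied your outline becomes a complete proof along the lines of \cite{GHKRadlvs}; as written, the crucial level-zero reduction is asserted rather than proved.
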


Using either argument, we obtain the following proposition.

\begin{prop}[\textbf{Case VIA}]\label{T:1vals}If $x=\pi^{\mu}$, then $ \mathcal{N}(G)_x = \{ -\mu\}.$
\end{prop}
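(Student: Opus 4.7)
The plan is to invoke Theorem \ref{T:translation}, which directly asserts that every element of $I\pi^{\mu}I$ is $I$-$\sigma$-conjugate to $\pi^{\mu}$. Because the Newton slope sequence $\ov{\nu}$ is constant on $\sigma$-conjugacy classes (it factors through $B(G)$, see Section \ref{S:strata}), this at once forces $\ov{\nu}(g) = \ov{\nu}(\pi^{\mu})$ for every $g \in I\pi^{\mu}I$.  Hence $\mathcal{N}(G)_x$ collapses to a single element, and the proposition reduces to the computation of $\ov{\nu}(\pi^{\mu})$.

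For the latter, the isocrystal $(F^3, \pi^{\mu}\sigma)$ splits canonically along the coordinate axes as a direct sum of three one-dimensional isocrystals $(F, \pi^{\mu_i}\sigma)$.  Each summand has characteristic polynomial $\sigma - \pi^{\mu_i}$ and therefore Newton slope $-\mu_i$ under the conventions of Section \ref{S:cyclic}.  Ordering these slopes in decreasing order using that $\mu$ is antidominant (Lemma \ref{T:reduction} reduces us to the case $\mu_1 \leq \mu_2 \leq \mu_3$ with $\mu_2 \geq 0$), we obtain $\ov{\nu}(\pi^{\mu}) = (-\mu_1, -\mu_2, -\mu_3) = -\mu$, whence $\mathcal{N}(G)_x = \{-\mu\}$.

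As the preamble to the proposition already notes, a self-contained alternative would mimic the matrix computations of Propositions \ref{T:123vals}--\ref{T:23vals}: write a general $A \in \pi^{\mu}I$ in coordinates and analyze the characteristic polynomial via Equations \eqref{E:alpha} and \eqref{E:beta} to verify directly that $\val(\alpha) = \mu_1$ and $\val(\beta) = -\mu_3$.  The main obstacle along this route is that a cyclic vector need not exist for every $A \in \pi^{\mu}I$, since the vanishing locus of $D$ is nontrivial when $w=1$; one would therefore have to split off the relevant subisocrystals and invoke Lemmas \ref{T:split} and \ref{T:GL_2} in auxiliary subcases, as was done in Proposition \ref{T:13vals}.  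In any such branch the valuation estimates together with Lemma \ref{T:charpolypo} yield $\ov{\nu}(A) = -\mu$, but the appeal to Theorem \ref{T:translation} produces the same conclusion far more efficiently.
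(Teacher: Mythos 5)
Your proof is correct and follows essentially the same route the paper indicates: the paper itself proves this proposition either by repeating the valuation computations or by citing Theorem \ref{T:translation}, and you take the latter path, correctly noting that $\ov{\nu}$ factors through $B(G)$ and computing $\ov{\nu}(\pi^{\mu})=-\mu$ from the splitting into one-dimensional isocrystals. Your closing sketch of the direct computational alternative matches the paper's other suggested argument, so there is nothing to add.
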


The hypotheses on $\mu$ in the statements of the propositions in this section omit $\mu=0$.  Although only the
base alcove $\mathbf{a}_1$ satisfies $\mu=0$ and $\mathbf{a}_1 \subset C^0$, we discuss the case $\mu=0$ for the
sake of completeness.   Recall the automorphism $\varphi$ from Lemma \ref{T:reduction}, which represents rotation
by 120 degrees about the center of the base alcove and induces the identity on $\mathcal{N}(G)$.  If
$x=\pi^{(-1,0,1)}s_1s_2$, for example, then $\varphi(x) = \pi^0s_1s_2$ so that our analysis in Proposition
\ref{T:123vals} yields that $\mathcal{N}(G)_x = \{0\}$.
Similarly, if $\mu=0$, then one can verify that $\mathcal{N}(G)_x = \{(0,0,0)\}$ for any $w \in W$, completing our analysis of case A.

\subsection{Conditions on valuations determining the Newton strata: Case B}\label{S:valpolysB}
We now argue that computing the explicit form of the variety $(xI)_{\leq \lambda}$ in case B proceeds in exactly the same manner as in case A.  The idea is to use a change of basis as in the proof of Proposition \ref{T:132vals} to reduce the computations required for case B to ones essentially the same as the calculations performed in Section \ref{S:valpolysA}.

More specifically, let $x = \pi^{\mu}w$ satisfy $\mathbf{a}_x \subset s_1(C^0)$, where $\mu_1 \geq 0$ and $\mu \neq (\mu_1,\mu_2,\mu_1)$.  Consider \begin{equation}\label{E:x'I'}s_1^{-1}\pi^{\mu}wIs_1 = \pi^{{\mu}_{\text{dom}^*}}(s_1^{-1}ws_1)(s_1^{-1}Is_1)=:x'I'.\end{equation}  By
$\mu_{\text{dom}^*}$ we mean the unique antidominant element in the Weyl orbit of $\mu$.  Here, $I'=s_1^{-1}Is_1$ is the
non-standard Iwahori subgroup
\begin{equation}\label{E:I'} I' = \begin{pmatrix} \mathcal{O}^{\times} &P &\mathcal{O}\\ \mathcal{O} & \mathcal{O}^{\times}
&\mathcal{O} \\ P &P & \mathcal{O}^{\times} \end{pmatrix}. \end{equation}  The varieties $(x'I)_{\leq \lambda'}$ for $\lambda' \in \mathcal{N}(G)_{x'}$ are precisely the ones described in the previous section.  As we demonstrate below, by replacing $I$ by $I'$ in the propositions from Section \ref{S:valpolysA}, we obtain a complete description of $(x'I')_{\leq \lambda'}$ for all possible values of $x'$.  To illustrate this phenomenon, we briefly present the calculation for $w=s_2s_1$ in case B, which mirrors case IA, since $s_1^{-1}(s_2s_1)s_1=s_1s_2$.  The other five arguments proceed in a similar fashion.

\begin{prop}[\textbf{Case IIB}]\label{T:132Bvals} Let $x=\pi^{\mu}s_2s_1 \in \widetilde{W}$ satisfy $\mathbf{a}_x \subset s_1(C^0)$, where $\mu_1 \geq 0$ and $\mu \neq (\mu_1,\mu_2,\mu_1)$.  Then $\mu_2 < \mu_1 < \mu_3$.  In addition, $\mu_2 < 0$ and $\mu_3 >0$.

Consider $x' = \pi^{(\mu_2,\mu_1,\mu_3)}s_1s_2$, and fix $\lambda'=(\lambda_1,\lambda_2,\lambda_3) \in \mathcal{N}(G)_{x'}$.  We then have \begin{equation*}\label{E:132Bgen}\lambda_1 \leq -\mu_2 - 1 \quad \text{and} \quad \lambda_3 \geq -\mu_3 + 1.\end{equation*}  Recall the non-standard Iwahori subgroup $I'$ defined in Equation \eqref{E:I'}.  To describe $(x'I')_{\leq \lambda'}$, we have the following two subcases:
\begin{enumerate}
\item[($i$)] If $-\mu_3+1\leq \lambda_3\leq -\mu_1$, then \begin{equation*} (x'I')_{\leq \lambda'} = \left\{ \begin{pmatrix} a& b& c\\d&e&f\\g&h&i \end{pmatrix} \in x'I' \biggm| a \in P^{-\lambda_1} \ \text{and}\ \begin{vmatrix} a & b\\ d& e\end{vmatrix} \in P^{\lambda_3}\right\}. \end{equation*}
\item[($ii$)] If $-\mu_1 < \lambda_3$, then \begin{equation*} (x'I')_{\leq \lambda'} = \left\{ \begin{pmatrix} a& b& c\\d&e&f\\g&h&i \end{pmatrix} \in x'I' \biggm| a \in P^{-\lambda_1}\ \text{and}\ \ov{d}b+\ov{g}c \in P^{\lambda_3}\right\}. \end{equation*} Note that subcase ($ii$) only arises when $\mu_1 >0$, since $\lambda_3$ is always non-positive.
\end{enumerate}
\end{prop}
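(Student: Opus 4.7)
The plan is to reduce Case IIB directly to the already established Case IA (Proposition~\ref{T:123vals}) via the conjugation by $s_1$ introduced in Equation~\eqref{E:x'I'}. First I would verify the structural inequalities on $\mu$: the geometric condition $\mathbf{a}_x \subset s_1(C^0)$ forces $\mu_2 \leq \mu_1 \leq \mu_3$, and combining the exclusion $\mu \neq (\mu_1,\mu_2,\mu_1)$ with the fact that alcoves inside an open Weyl chamber sit strictly inside it yields the strict chain $\mu_2 < \mu_1 < \mu_3$. Together with $\mu_1 \geq 0$ and the $SL_3$-trace condition $\mu_1+\mu_2+\mu_3 = 0$, this forces $\mu_2 < 0$ and $\mu_3 > 0$, and identifies $\mu_{\text{dom}^*} = (\mu_2,\mu_1,\mu_3)$, which is the shape claimed for $x'$.

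Second, I would set up the scheme isomorphism via conjugation by $s_1$. Using $s_1^{-1}(s_2s_1)s_1 = s_1s_2$ and $s_1^{-1}\pi^{\mu}s_1 = \pi^{(\mu_2,\mu_1,\mu_3)}$, direct computation gives
\[
s_1^{-1}(xI)s_1 \;=\; \pi^{(\mu_2,\mu_1,\mu_3)}s_1s_2 \cdot s_1^{-1}Is_1 \;=\; x'I'.
\]
Since $s_1$ lies in $G$ and is fixed by $\sigma$, ordinary conjugation by $s_1^{-1}$ coincides with $\sigma$-conjugation by $s_1^{-1}$, and hence preserves Newton slope sequences. Thus $\mathcal{N}(G)_x = \mathcal{N}(G)_{x'}$, and for each $\lambda' \in \mathcal{N}(G)_{x'}$ the scheme isomorphism restricts to a bijection $(xI)_{\leq \lambda'} \xrightarrow{\sim} (x'I')_{\leq \lambda'}$. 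Consequently, identifying $(x'I')_{\leq \lambda'}$ in the desired form is enough.

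Third, I would rerun the proof of Case IA on $x'I'$ in place of $xI$, as anticipated by the parenthetical remark at the start of Section~\ref{S:valpolysA}. Multiplying out $x'I'$ explicitly shows that its entry-wise valuation pattern agrees with that of $\pi^{(\mu_2,\mu_1,\mu_3)}s_1s_2 \cdot I$ except in positions $(2,2)$ and $(3,1)$, where shifts of $\pm 1$ are introduced by passing from $I$ to $I'$. I would then track these shifts through the formulas for $\alpha$, $\beta$, and $D$ of Section~\ref{S:charpoly}: the cyclicity of $e_1$ is preserved because $\val(D) = 2\mu_1 + \mu_3$ is unchanged, the estimate on $\val(\alpha)$ still yields $\lambda_1 \leq -\mu_2 - 1$, and a rearrangement parallel in spirit to Equation~\eqref{E:betaarrange} isolates $\begin{vmatrix} a & b \\ d & e \end{vmatrix}$ as the only relevant contribution in subcase (i). The shift in the $(2,2)$ entry moves one summand of $\beta$ up by one in valuation, and this is exactly what causes the subcase boundary to land at $-\mu_1$ here rather than at $-\mu_1 + 1$ as in Case~IA; the hypothesis $\mu_3 \neq \mu_1$ simultaneously eliminates the exceptional half-integer lower bound of Case~IA, producing the cleaner $-\mu_3 + 1$.

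The main obstacle is exactly this last bookkeeping step. One has to rederive the valuation table for $x'I'$ from scratch, check that no summand of $\beta$ or $D$ acquires an unexpected leading cancellation under the modified entry bounds, and then verify that the subcase boundary indeed shifts to $-\mu_1$. Conceptually nothing new is required beyond the arguments already used for Proposition~\ref{T:123vals}, but since the change $I \to I'$ alters the thresholds, every valuation estimate must be recomputed rather than quoted verbatim; this is what the author had in mind when asking the reader to perform the case-A computation ``not only for $xI$, but also for $xI'$''.
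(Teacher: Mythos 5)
Your overall strategy is the one the paper itself follows: conjugate by $s_1$ as in Equation \eqref{E:x'I'} to pass from $xI$ to the non-standard coset $x'I'$, and then rerun the Case IA valuation analysis of Proposition \ref{T:123vals} directly on $x'I'$, checking that only the subcase thresholds move. Two specific points in your sketch, however, are wrong or unjustified. First, the conclusion ``$\mathcal{N}(G)_x = \mathcal{N}(G)_{x'}$'' does not follow from your conjugation argument: since $\sigma(s_1)=s_1$, conjugation by $s_1^{-1}$ preserves Newton slopes and carries $xI$ onto $x'I'$, so it identifies the slopes occurring in $xI$ with those occurring in $x'I'$; but $\mathcal{N}(G)_{x'}$ is defined via the \emph{standard} Iwahori subgroup (slopes occurring in $Ix'I$, equivalently in $x'I$), and the paper explicitly cautions in Remark \ref{T:valpolysB} and Question \ref{T:posetmap} that $\mathcal{N}(G)_x$ and $\mathcal{N}(G)_{x'}$ need not even be in bijection in general. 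The equality is not needed for the proposition (your step 3 derives the bounds on $\lambda'$ from the $x'I'$ analysis itself), so this claim should be dropped rather than asserted as a consequence of slope-preservation.

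Second, your bookkeeping for the shifted subcase boundary is mis-attributed. Comparing $x'I'$ with the naive coset $x'I$, the $(2,2)$ entry bound rises from $P^{\mu_1}$ to $P^{\mu_1+1}$ while the $(3,1)$ bound drops from $P^{\mu_3+1}$ to $P^{\mu_3}$. The $(2,2)$ shift only raises $B_1$ from $P^{\mu_1+\mu_2+1}$ to $P^{\mu_1+\mu_2+2}$; since $\ov{\ov{d}}\ov{b}$ still lies in $P^{\mu_1+\mu_2+1}$, this does not affect the bound $\lambda_3 \geq -\mu_3+1$, nor the subcase boundary. What moves the boundary from $-\mu_1+1$ to $-\mu_1$ is the $(3,1)$ shift: with $g \in P^{\mu_3}$ rather than $P^{\mu_3+1}$, the term $\ov{\ov{g}}\ov{c}$ is only guaranteed to lie in $P^{\mu_2+\mu_3}$ (while $B_2 \in P^{\mu_2+\mu_3+1}$), so it is automatically in $P^{\lambda_3}$ exactly when $\lambda_3 \leq \mu_2+\mu_3 = -\mu_1$. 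Your planned recomputation would surface this, but as written the stated mechanism would mislead you at precisely the step you identify as the crux. Your remaining points are sound: $\val(D)=2\mu_1+\mu_3$ so $e_1$ stays cyclic, the $\lambda_1 \leq -\mu_2-1$ bound is unchanged, and the hypothesis $\mu_1 \neq \mu_3$ is indeed what removes the half-integral exceptional case of Case IA.
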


\begin{proof}
If $A \in x'I'$, then we have \begin{equation*}\label{E:132Bcoset}
A:= \begin{pmatrix} a& b& c\\d&e&f\\g&h&i \end{pmatrix} \in \begin{pmatrix}P^{\mu_2+1} & P^{\mu_2+1} &
P^{\mu_2}_{\times}\\ P^{\mu_1}_{\times} & P^{\mu_1+1} & P^{\mu_1} \\ P^{\mu_3} & P^{\mu_3}_{\times} & P^{\mu_3}
\end{pmatrix} = x'I',
\end{equation*} so that $e_1$ is a cyclic vector for $(F^3,A\sigma)$.  Now, observe that
$$\frac{1}{D}\left( (\ov{\ov{d}}\ov{e}+\ov{\ov{g}}\ov{f})\begin{vmatrix} d&e\\g&h\end{vmatrix} +
(\ov{\ov{d}}\ov{h}+\ov{\ov{g}}\ov{i})\begin{vmatrix}d&f\\g&i\end{vmatrix}\right) \in \mathcal{O},$$ since $\mu_1 \geq 0$.  Therefore, $\alpha \in P^{-\lambda_1} \iff \ov{\ov{a}} \in P^{-\lambda_1} \iff a \in P^{-\lambda_1}$. In addition, since in this case $a \in P^{\mu_2+1}$, we see that $\lambda_1\leq -\mu_2-1$.

Now we consider the condition $\beta \in P^{\lambda_3}$.  Compute that
\begin{align*}
B_1 &\in  P^{\mu_1+\mu_2+2} & B_2 &\in P^{\mu_2+\mu_3+1}\\
- \ov{\ov{d}}\ov{b}&\in P^{\mu_1+\mu_2+1} &  -\ov{\ov{g}}\ov{c} &\in P^{\mu_2+\mu_3}\\
\frac{\ov{D}}{D}\begin{vmatrix}e&f\\h&i\end{vmatrix} &\in P^{\mu_1+\mu_3} \subset \mathcal{O}. & \phantom{}
\end{align*}  In particular, $\beta \in P^{\mu_1+\mu_2+1}$ so that $\lambda_3 \geq -\mu_3 + 1.$  Comparing the valuations of the summands of $\beta$, we see that we again have two subcases:
\begin{enumerate}
\item[($i$)] $\mu_1+\mu_2+1 \leq \lambda_3\leq \mu_2+\mu_3$,
\item[($ii$)] $\mu_2+\mu_3 < \lambda_3$.
\end{enumerate}
The analysis of these two subcases proceeds in the same manner as in the proof of Proposition \ref{T:123vals}, which the reader may easily verify.
\end{proof}

Observe that the varieties $(xI)_{\leq \lambda}$ and $(x'I')_{\leq \lambda'}$ differ only by a change of basis, so that the description for case IIB provided in Proposition \ref{T:132Bvals} suffices.  Furthermore, the polynomials that describe $(x'I')_{\leq \lambda'}$ in case IIB are exactly the same as the ones appearing in case IA.  In fact, after performing the change of basis, the only difference between the analysis of these two cases is that the ranges for $\lambda_3$ that determine subcases ($i$) and ($ii$) are slightly different.  We generalize these observations for any $x$ satisfying the hypotheses of case B in the following remark.

\begin{remark}\label{T:valpolysB}
Let $x=\pi^{\mu}w$ satisfy $\mathbf{a}_x \subset s_1(C^0)$ with $\mu_1 \geq 0$ and $\mu \neq (\mu_1,\mu_2,\mu_1)$.  Conjugation by $s_1^{-1}$ transforms $x$ to $x'$, $I$ to $I'$, and $\lambda \in \mathcal{N}(G)_x$ to $\lambda' \in \mathcal{N}(G)_{x'}$, where $x'$ and $I'$ are defined by Equation \eqref{E:x'I'}.  Note, however, that the posets $\mathcal{N}(G)_x$ and $\mathcal{N}(G)_{x'}$ may be different and not even in bijection with one another.  In fact, it is not clear from our methods what the map between these two posets is. \begin{question}\label{T:posetmap} Can we explicitly describe the map $\mathcal{N}(G)_x \rightarrow \mathcal{N}(G)_{x'}$ induced by the map $xI \rightarrow x'I'$ given by conjugation by an element of the finite Weyl group? \end{question} \noindent This question is related to Question \ref{T:nuxform} about finding a closed formula for $\nu_x$.  Together with Conjecture \ref{T:domconj}, Question \ref{T:posetmap} would provide a means by which we could answer Question \ref{T:nonemptyQ} at least in the case of $SL_n(F)$, which asks for a complete description of $\mathcal{N}(G)_x$.

Independent of understanding the map between posets, however, we know that \begin{equation*} \codim((xI)_{\leq
\lambda} \subseteq xI) = \codim((x'I')_{\leq \lambda'} \subseteq x'I'). \end{equation*} Therefore, for the
purpose of the theorem, it suffices to compute $(x'I')_{\leq \lambda'}$.  As Proposition \ref{T:132Bvals}
illustrates, the only possible difference between $(x'I')_{\leq \lambda'}$ and the corresponding variety
$(x'I)_{\leq \lambda}$ from Section \ref{S:valpolysA}, is the range for $\lambda_3$ that defines the subcases.
We therefore leave the remainder of the verification to the reader, both here and in the proof of Theorem
\ref{T:main}.
\end{remark}

\end{section}

\begin{section}{The Poset of Newton Slope Sequences $\mathcal{N}(G)_x$}\label{S:slopes}
As a direct consequence of our calculations in Section \ref{S:valpolysA}, we can list the Newton slope sequences that arise for a particular $IxI$. This calculation is necessary in the proof of Theorem \ref{T:main} in that it provides a concrete number for the length of the segment $[\lambda, \nu_x]$ in the poset $\mathcal{N}(G)_x$.  The reader will recall from Section \ref{S:adlv} that one additional application of such a calculation is to determine for which $b \in G$ the affine Deligne-Lusztig variety $X_x(b) \neq \emptyset$.  We begin by explicitly describing the poset $\mathcal{N}(G)_x$ for $x$ such that $\mathbf{a}_x$ lies in the antidominant Weyl chamber.  We then explain the algorithm for computing $\mathcal{N}(G)_x$ for any $x \in \widetilde{W}$, although we omit the details.  These results are recorded in Tables \ref{Ta:s_12} through \ref{Ta:s_2} at the end of the paper.  An obvious consequence of understanding $\mathcal{N}(G)_x$ is that we obtain a list of the generic Newton slope sequences $\nu_x$.  We conclude this section by discussing some patterns for these generic Newton slope sequences.

\subsection{The set of Newton slopes in the antidominant Weyl chamber}\label{S:slopesC0}
In Section \ref{S:valpolysA}, for each $x \in \widetilde{W}$ satisfying $\mathbf{a}_x \subset C^0$ and $\mu_2 \geq 0$, we explicitly computed either $(xI)_{\leq \lambda}$ or $K_i \cap (xI)_{\leq \lambda}$ for $i \in \{1, 2, 3\}$.  We now use those calculations to describe the set of Newton slopes $\mathcal{N}(G)_x$ for any $x\in \widetilde{W}$ such that $\mathbf{a}_x \subset C^0$.

\pagebreak

\begin{prop} We study the cases $w = s_1s_2$ and $w=s_2s_1$ together.
\begin{enumerate}
\item Let $x=\pi^{\mu}s_1s_2$ and $\mathbf{a}_x \subset C^0$.  Then $\mu_1< \mu_2 \leq \mu_3$, and \begin{equation}\label{E:N(G)123}\mathcal{N}(G)_x =
\begin{cases}
 \{ \lambda \in \mathcal{N}(G) \mid \lambda \leq (-\mu_1-1, \frac{\mu_1+1}{2},\frac{\mu_1+1}{2}) \}, & \text{if $\mu_2=\mu_3$;}\\
 \{ \lambda \in \mathcal{N}(G) \mid \lambda \leq -\mu - (1,0,-1)\}, & \text{otherwise}.
\end{cases}
\end{equation}
\item Let $x=\pi^{\mu}s_2s_1$ and $\mathbf{a}_x \subset C^0$.  Then $\mu_1\leq \mu_2 < \mu_3$, and \begin{equation}\label{E:N(G)132}\mathcal{N}(G)_x =
\begin{cases}
 \{ \lambda \in \mathcal{N}(G) \mid \lambda \leq (\frac{\mu_3-1}{2}, \frac{\mu_3-1}{2},-\mu_3+1) \}, & \text{if $\mu_1=\mu_2$;}\\
 \{ \lambda \in \mathcal{N}(G) \mid \lambda \leq -\mu - (1,0,-1)\}, & \text{otherwise}.
\end{cases}
\end{equation}
\end{enumerate}
\end{prop}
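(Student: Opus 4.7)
The plan is to prove each set equality in two steps: first, identify $\nu_x$ as the unique maximal element of $\mathcal{N}(G)_x$, and second, show that every dominant $\lambda \leq \nu_x$ with $\lambda_1+\lambda_2+\lambda_3=0$ is actually realized by some element of $xI$. The forward inclusion $\mathcal{N}(G)_x \subseteq \{\lambda \leq \nu_x\}$ will follow essentially for free from the previous propositions, whereas the reverse inclusion requires an explicit construction of a representative for each candidate slope sequence.

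For the forward direction, I would read off the bounds on $\lambda_1$ and $\lambda_3$ from Propositions \ref{T:123vals} and \ref{T:132vals}. For $x = \pi^\mu s_1 s_2$ with $\mathbf{a}_x \subset C^0$, Proposition \ref{T:123vals} gives $\lambda_1 \leq -\mu_1 - 1$ together with $\lambda_3 \geq -\mu_3 + 1/2$, where the half-integer value is attained only in the boundary case $\mu_2 = \mu_3$ and otherwise the sharper bound $\lambda_3 \geq -\mu_3 + 1$ holds. Combined with the dominance condition $\lambda_1 \geq \lambda_2 \geq \lambda_3$ and the traceless condition $\lambda_1+\lambda_2+\lambda_3=0$, these inequalities pin down the two cases of \eqref{E:N(G)123}. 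The analogous application of Proposition \ref{T:132vals} for $x = \pi^\mu s_2 s_1$ in the generic subcase $\mu_1 < \mu_2 < \mu_3$ yields the non-degenerate case of \eqref{E:N(G)132}.

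For the reverse direction, given any admissible $\lambda$, I would exhibit a specific $A \in xI$ with $\ov{\nu}(A) = \lambda$. Using the explicit characterizations of $(xI)_{\leq \lambda}$ from the earlier propositions, a natural ansatz is to set $a = \pi^{\lceil -\lambda_1 \rceil}$ and to choose the remaining entries, with valuations compatible with the shape of $xI$, so that the appropriate polynomial quantity---either the minor $\det\begin{pmatrix} a & b \\ d & e \end{pmatrix}$ or the expression $\ov{d}\, b + \ov{g}\, c$, depending on the subcase---achieves valuation exactly $\lceil \lambda_3 \rceil$ rather than strictly greater. Working with entries in $k[[\pi]] \subset \mathcal{O}$ eliminates any complication from the Frobenius $\sigma$, and Lemma \ref{T:charpolypo} then confirms $\ov{\nu}(A) = \lambda$.

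The one remaining piece is the degenerate boundary case $x = \pi^\mu s_2 s_1$ with $\mu_1 = \mu_2$, which falls outside the strict hypotheses of Proposition \ref{T:132vals}. Here I would repeat the cyclic-vector and characteristic-polynomial calculation in the style of Proposition \ref{T:132vals}, tracking the analog of the $\mu_2 = \mu_3$ degeneration in Proposition \ref{T:123vals}: the equal-coordinate degeneration causes an additional term in $\alpha$ to match the valuation of the dominant one, loosening the bound on $\lambda_1$ by $1/2$ and producing the half-integer slopes in the second case of \eqref{E:N(G)132}. The main obstacle throughout the argument is the realization step, since one must arrange both $\val(\alpha)$ and $\val(\beta)$ to attain their minimum possible values simultaneously, rather than merely bound them from below; the bookkeeping is straightforward but each pairing of subcases from the prior propositions must be checked individually.
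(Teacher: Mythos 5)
Your overall strategy---forward inclusion from the valuation bounds of the earlier propositions, reverse inclusion by exhibiting explicit $\sigma$-fixed witness matrices---is the same as the paper's, but as written there is a genuine gap: Propositions \ref{T:123vals} and \ref{T:132vals} carry the standing hypothesis $\mu_2 \geq 0$, while the present proposition is asserted for every alcove in $C^0$. Roughly half of these alcoves have $\mu_2 < 0$ (including, necessarily, the degenerate case $\mu_1 = \mu_2$ of part (2), since there $\mu_1 = \mu_2 = -\mu_3/2 < 0$), so you cannot simply ``read off'' the bounds from those propositions on that range. The paper closes exactly this gap with the involution $\psi$ of Lemma \ref{T:reduction} (via Proposition \ref{T:thetacodim}): $\psi$ sends $\pi^{(\mu_1,\mu_2,\mu_3)}s_1s_2$ to $\pi^{(-\mu_3,-\mu_2,-\mu_1)}s_2s_1$ and acts on slope sequences by $\lambda \mapsto (-\lambda_3,-\lambda_2,-\lambda_1)$; this is the whole reason the cases $w=s_1s_2$ and $w=s_2s_1$ are treated together, and in particular the boundary case $\mu_1=\mu_2$ of \eqref{E:N(G)132} is obtained for free from the boundary case $\mu_2=\mu_3$ of \eqref{E:N(G)123}, with no new matrix computation. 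Your plan needs either this symmetry or a re-derivation of the valuation estimates and witnesses for $\mu_2<0$; neither appears in the proposal.

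Second, your proposed mechanism for the degenerate case $\mu_1=\mu_2$ is misidentified. The lower bound $\val(\alpha)\geq \mu_1+1$ is unchanged there: an additional term of the same valuation cannot lower that bound, and a loosening of the slope bound by $\tfrac{1}{2}$ could never come from the integer-valued $\val(\alpha)$ in any case. What produces the half-integral generic point $(\tfrac{\mu_3-1}{2},\tfrac{\mu_3-1}{2},-\mu_3+1)$ is the Newton polygon geometry: the estimate $\val(\beta)\geq \mu_1+\mu_2+1$ becomes $\val(\beta)\geq 2\mu_1+1$, and when this is attained while $\val(\alpha)\geq\mu_1+1$ the point $(1,-\val(\alpha))$ drops below the polygon, so the first two slopes are both $-\val(\beta)/2=-\mu_1-\tfrac{1}{2}>-\mu_1-1$. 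This mirrors the $\mu_2=\mu_3$ degeneration in Proposition \ref{T:123vals}, where the loosening affects $\lambda_3$ for the symmetric reason. If you do insist on a direct computation instead of applying $\psi$, it must be carried out in the regime $\mu_2<0$, where several of the ``automatically in $\mathcal{O}$'' steps of Proposition \ref{T:132vals} need rechecking; the realization step with monomial matrices over $k[[\pi]]$ is otherwise sound and matches the paper's explicit witnesses.
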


\begin{proof}
We first examine $\mu$ satisfying the additional hypothesis $\mu_2\geq 0$, since this was a crucial hypothesis in Section \ref{S:valpolysA}.  We then apply the reflection $\psi$ from Lemma \ref{T:reduction}, which interchanges the two simple roots.  Recall that $\psi(x) = \pi^{(-\mu_3,-\mu_2, -\mu_1)}w'$, where the reduced expression for $w'$ is that for $w$ with all of the subscripts reversed.  In particular, $\psi(s_1s_2) = s_2s_1$, which motivates our studying these two cases together.

Let $x=\pi^{\mu}s_1s_2$, where $\mu_1< \mu_2 \leq \mu_3$ and $\mu_2 \geq 0$.  Recall from Proposition \ref{T:123vals} that $\lambda_1 \leq -\mu_1-1$ and  $\lambda_3 \geq -\mu_3+1$, unless $\mu_2=\mu_3$.  Therefore, by Lemma \ref{T:charpolypo}, if $\mu_2\neq \mu_3$, we have  $\lambda \leq -\mu - (1,0,-1)$.  In the special case $\mu_2=\mu_3$, Equation \eqref{E:123gen} indicates that $\lambda \leq  (-\mu_1-1, \frac{\mu_1+1}{2},\frac{\mu_1+1}{2})$.

Now consider $x=\pi^{\mu}s_2s_1$, where $\mu_1< \mu_2 < \mu_3$ and $\mu_2 \geq 0$.  Recall Equation \eqref{E:132gen} that implies that $\lambda \leq -\mu - (1,0,-1)$ holds without exception.

To demonstrate the reverse containment, it suffices to show that for any $\lambda$ in the designated ranges, we have $(xI)_{\lambda} \neq \emptyset$.  Let $x=\pi^{\mu}s_1s_2$, where $\mu_1< \mu_2 \leq \mu_3$ and $\mu_2 \geq 0$, and assume that $\lambda_3 \geq -\mu_3+1$.  Routine calculations demonstrate that \begin{equation*} \begin{pmatrix} \pi^{-\lambda_1} & \pi^{\lambda_3 - \mu_2} & \pi^{\mu_1} \\ \pi^{\mu_2} & 0 & 0 \\ 0 & \pi^{\mu_3} & 0 \end{pmatrix} \in (xI)_{\lambda}, \end{equation*}  where the reader will recall that $\pi^{\ell}:= \pi^{\lceil \ell \rceil}$, for $\ell \in \Q$. In the special case $\lambda =\linebreak (-\mu_1-1,\frac{\mu_1+1}{2},\frac{\mu_1+1}{2})$, we have that \begin{equation*} \begin{pmatrix} \pi^{\mu_1+1} & 0 & \pi^{\mu_1} \\ \pi^{\mu_2} & 0 & 0 \\ 0 & \pi^{\mu_3} & 0 \end{pmatrix} \in (xI)_{\lambda}. \end{equation*}

Now let $x=\pi^{\mu}s_2s_1$, where $\mu_1< \mu_2 < \mu_3$ and $\mu_2 \geq 0$.  To prove the opposite containment in Equation \eqref{E:N(G)132}, one can check that  \begin{equation*} \begin{pmatrix} \pi^{-\lambda_1} & \pi^{\mu_1} & 0 \\ \pi^{\lambda_3-\mu_1} & 0 & \pi^{\mu_2} \\ \pi^{\mu_3}&0 & 0 \end{pmatrix} \in (xI)_{\lambda}. \end{equation*}

Finally, observe that $\psi(\pi^{(\mu_1,\mu_2,\mu_3)}s_1s_2) = \pi^{-(\mu_3,\mu_2,\mu_1)}s_2s_1$ and vice versa.  By applying $\psi$ to the values of $x$ we have discussed, we see that Equations \eqref{E:N(G)123} and \eqref{E:N(G)132} are satisfied.
\end{proof}

Recall from Section \ref{S:valpolysA} that in Cases IIIA, IVA, and VA, we explicitly computed $(K_i)_{\leq \lambda} = \linebreak K_i \cap (xI)_{\leq \lambda}$, where $i =1,2,3$, respectively.  Note, however, that the obvious map $\ov{\nu}:(K_i)_{\leq \lambda} \rightarrow \mathcal{N}(G)$ has the same image as the map $\ov{\nu}: (xI)_{\leq \lambda} \rightarrow \mathcal{N}(G)$.  To compute $\mathcal{N}(G)_x$ in these three cases, it therefore suffices to use the results from Propositions \ref{T:13vals}, \ref{T:12vals}, and \ref{T:23vals}, in which we only consider elements in the corresponding subschemes $(K_i)_{\leq \lambda}$.

\begin{prop}\label{T:N(G)13}
Let $x=\pi^{\mu}s_1s_2s_1$ and $\mathbf{a}_x \subset C^0$.  Then $\mu_1 < \mu_2< \mu_3$, and \begin{equation*}\mathcal{N}(G)_x =
\begin{cases}
\{ \lambda \in \mathcal{N}(G) \mid (\frac{\mu_3-1}{2}, \frac{\mu_3-1}{2}, -\mu_3+1) \leq \lambda \leq -\mu-(1,0,-1)\}, & \text{if $\mu_2+1 = \mu_3$},\\
\{ \lambda \in \mathcal{N}(G) \mid (-\mu_1-1, \frac{\mu_1+1}{2}, \frac{\mu_1+1}{2}) \leq \lambda \leq -\mu-(1,0,-1)\}, & \text{if $\mu_1+1 = \mu_2$},\\
 \{ \lambda \in \mathcal{N}(G) \mid \lambda \leq -\mu - (2,0,-2)\}\cup \{-\mu-(1,0,-1)\}, & \text{otherwise}.
\end{cases}
\end{equation*}
\end{prop}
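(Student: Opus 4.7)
My plan is to lean on Proposition \ref{T:13vals} (whose conditions describe $(K_1)_{\leq \lambda}$ inside $K_1$), Lemma \ref{T:charpolypo} (which translates valuation conditions into the partial order), and Corollary \ref{T:K_1cor}. The universal upper bound $\lambda \leq \nu_x := -\mu - (1,0,-1)$ is immediate from the inequalities \eqref{E:13gen} and Lemma \ref{T:charpolypo}, so the task reduces to identifying which $\lambda \leq \nu_x$ are actually realized, and to exhibit matrix representatives for each of them.

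For the case $\mu_2 + 1 = \mu_3$, Proposition \ref{T:13vals} directly pins down $\lambda_3 = -\mu_3 + 1$: when $d \neq 0$ this is part of subcase (ii), and when $d = 0$ the splitting of $(F^3, A\sigma)$ via Lemmas \ref{T:split} and \ref{T:GL_2} again forces $\lambda_3 = -\mu_3+1$. The realized set is then an interval from $((\mu_3-1)/2, (\mu_3-1)/2, -\mu_3+1)$ up to $\nu_x$, and each element is realized by a matrix of the same shape as in the $s_1s_2$ proposition. The symmetric case $\mu_1 + 1 = \mu_2$ follows by applying the involution $\psi$ of Lemma \ref{T:reduction}, which fixes $s_1s_2s_1$ and sends $\mu \mapsto (-\mu_3,-\mu_2,-\mu_1)$, converting $\mu_2+1=\mu_3$ into $\mu_1+1=\mu_2$; Proposition \ref{T:thetacodim} then transports the description.

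The generic case $\mu_1 + 1 < \mu_2$ and $\mu_2 + 1 < \mu_3$ is the main obstacle. The claim has two parts: every $\lambda \leq -\mu - (2,0,-2)$ together with $\nu_x$ itself is realized, and no $\lambda$ strictly between $-\mu-(2,0,-2)$ and $\nu_x$ is realized. I would prove the gap (second) part by a valuation-coupling argument. In subcase (i) of Proposition \ref{T:13vals}, the entries satisfy $b \in P^{\mu_1+1}$, $d \in P^{\mu_2+1}$, $\val(e) = \mu_2$, so whenever $\val(a) = \mu_1+1$ we have $\val(ae) = \mu_1 + \mu_2 + 1 < \mu_1+\mu_2+2 \leq \val(bd)$, forcing $\val(ae-bd) = \mu_1+\mu_2+1$ and hence $\lambda_3 = -\mu_3 + 1$; conversely, $\lambda_3 = -\mu_3+1$ requires $\val(ae-bd)$ to attain its minimum, which the same inequality shows can only happen when $\val(a) = \mu_1+1$. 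Thus $\lambda_1 = -\mu_1-1$ and $\lambda_3 = -\mu_3+1$ are equivalent conditions and both together characterize $\lambda = \nu_x$. The generic hypothesis $\mu_1+1 < \mu_2$ is essential in excluding the alternative in which $\lambda_1 = -\mu_1-1$ is realized via the middle vertex of the Newton polygon: that alternative would require $\val(\beta) = 2\mu_1+2$, contradicting the general lower bound $\val(\beta) \geq \mu_1+\mu_2+1$. An analogous argument in subcase (ii) uses the exact valuation $\val(\ov{g}c) = \mu_1+\mu_3$ in place of the $\val(bd)$ calculation.

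For non-emptiness, I would exhibit explicit matrix templates: $\nu_x$ is realized by a matrix with $\val(a) = \mu_1+1$ patterned after those in the $s_1s_2$ proposition, and every $\lambda \leq -\mu-(2,0,-2)$ is realized by choosing $\val(a) \geq \mu_1+2$ and adjusting the remaining entries to produce the prescribed $\val(\alpha)$ and $\val(\beta)$. The hardest step is the gap argument in the generic case; while each valuation inequality is elementary, their combination succeeds precisely because the generic hypotheses exclude the degenerate valuation alignments that would otherwise allow $\lambda$ to land strictly between $-\mu-(2,0,-2)$ and $\nu_x$.
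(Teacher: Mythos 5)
Your proposal is correct and follows the paper's own proof essentially step for step: the upper bound $\lambda \leq -\mu-(1,0,-1)$ from \eqref{E:13gen} and Lemma \ref{T:charpolypo}, the gap in the generic case by comparing $\min\{\val(ae)\}=\mu_1+\mu_2+1$ with $\min\{\val(bd)\}=\mu_1+\mu_2+2$ (so $\val(a)=\mu_1+1$ forces $\lambda=\nu_x$, while $\val(a)\geq\mu_1+2$ forces $\lambda\leq-\mu-(2,0,-2)$), explicit matrix witnesses for non-emptiness, and the symmetry $\psi$ of Lemma \ref{T:reduction} to reach the remaining translation parts. Two cautions for the write-up: the witnesses must lie in the $s_1s_2s_1$ coset, so they are $K_1$-shaped matrices as in the paper (in particular $d=0$ matrices, whose splitting via Lemmas \ref{T:split} and \ref{T:GL_2} with $\val(a)$ varying is what realizes the intermediate elements of the interval when $\mu_2+1=\mu_3$) rather than literally the matrices from the $s_1s_2$ proposition, and since Proposition \ref{T:13vals} assumes $\mu_2\geq 0$, the map $\psi$ has to be invoked to cover all of $C^0$, not only to convert the case $\mu_2+1=\mu_3$ into $\mu_1+1=\mu_2$.
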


\begin{proof}
First assume that $\mu_2 \geq 0$.  Recall the inequalities from Equation \eqref{E:13gen} that demonstrate that $\mathcal{N}(G)_x \subseteq  \{ \lambda \in \mathcal{N}(G) \mid \lambda \leq -\mu - (1,0,-1)\}$. In the special case $\mu_2+1=\mu_3$, recall from Proposition \ref{T:13vals} that $\lambda_3 = -\mu_3+1$ is fixed.  Let $x = \pi^{\mu}s_1s_2s_1$ satisfy $\mathbf{a}_x \subset C^0$ and $\mu_2 \geq 0$. Note in this case that $\min\{\val(ae)\}=\mu_1+\mu_2+1$ and $\min\{\val(bd)\}=\mu_1+\mu_2+2$.  This observation implies that if $-\mu_3+1\leq \lambda_3 \leq -\mu_2$, and both $\lambda<-\mu-(1,0,-1)$ and $\lambda \nleq - \mu-(2,0,-2)$, then $(K_1)_{\lambda} = \emptyset$.

For the opposite containments in the case $\mu_2 \geq 0$, we again provide elements in $(K_1)_{\lambda}$ for all possible $\lambda$.  Here, our examples must be handled according to several cases.  First, note that \begin{equation*}\begin{pmatrix} \pi^{\mu_1+1} & 0 & \pi^{\mu_1} \\ 0 & \pi^{\mu_2} & 0 \\ \pi^{\mu_3} & 0 & 0\end{pmatrix} \in (K_1)_{\lambda}, \end{equation*} for $\lambda_3 = -\mu_3+1$. If $\lambda \leq -\mu - (2,0,-2)$, we may assume that $\mu_2+2\leq \mu_3$.  Consider the two subcases in the statement of Proposition \ref{T:13vals} and compute that \begin{align*} (i) \quad \begin{pmatrix} \pi^{-\lambda_1} & \pi^{-\lambda_1-1}+\pi^{\lambda_3-\mu_2-1} & \pi^{\mu_1} \\ \pi^{\mu_2+1} & \pi^{\mu_2} & 0 \\ \pi^{\mu_3} & 0 & 0\end{pmatrix} &\in (K_1)_{\lambda},\\ (ii) \quad \begin{pmatrix} \pi^{-\lambda_1} & -\pi^{\mu_1+1} & \pi^{\mu_1} \\ \pi^{\mu_3-1}+\pi^{\lambda_3-\mu_1-1} & \pi^{\mu_2} & 0 \\ \pi^{\mu_3} & 0 & 0\end{pmatrix} &\in (K_1)_{\lambda}. \end{align*}

Finally, since $\psi(s_1s_2s_1) = s_2s_1s_2= s_1s_2s_1$, the result for all $\mu$ follows.
\end{proof}

\begin{prop} We study the remaining cases $w=s_1$ and $w=s_2$ together.
\begin{enumerate}
\item Let $x=\pi^{\mu}s_1$ and $\mathbf{a}_x \subset C^0$.  Then $\mu_1 < \mu_2\leq \mu_3$, and \begin{equation*}\mathcal{N}(G)_x =
\begin{cases}
 \{ (\frac{\mu_3}{2}, \frac{\mu_3}{2},-\mu_3) \}, & \text{if $\mu_1+1=\mu_2$;}\\
 \{ \lambda \in \mathcal{N}(G) \mid  (\frac{\mu_3}{2}, \frac{\mu_3}{2},-\mu_3) \leq \lambda \leq -\mu - (1,-1,0)\}, & \text{otherwise}.
\end{cases}
\end{equation*}
\item Let $x=\pi^{\mu}s_2$ and $\mathbf{a}_x \subset C^0$.  Then $\mu_1 \leq \mu_2< \mu_3$, and \begin{equation*}\mathcal{N}(G)_x =
\begin{cases}
 \{ (-\mu_1, \frac{\mu_1}{2}, \frac{\mu_1}{2}) \}, & \text{if $\mu_2+1=\mu_3$;}\\
 \{ \lambda \in \mathcal{N}(G) \mid (-\mu_1, \frac{\mu_1}{2}, \frac{\mu_1}{2}) \leq \lambda \leq -\mu - (0,1,-1)\}, & \text{otherwise}.
\end{cases}
\end{equation*}
\end{enumerate}
\end{prop}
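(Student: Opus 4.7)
The plan is to parallel the strategy of Proposition \ref{T:N(G)13}: first establish both cases under the auxiliary hypothesis $\mu_2 \geq 0$ using the results of Propositions \ref{T:12vals} and \ref{T:23vals}, and then invoke the automorphism $\psi$ from Lemma \ref{T:reduction} to extend to all $\mu$. Since $\psi$ sends $\pi^{\mu}s_1$ to $\pi^{(-\mu_3,-\mu_2,-\mu_1)}s_2$ and conversely, and acts on Newton slope sequences by $(\lambda_1,\lambda_2,\lambda_3) \mapsto (-\lambda_3,-\lambda_2,-\lambda_1)$, proving each of the two cases under $\mu_2 \geq 0$ yields the other case under $\mu_2 \leq 0$, covering all $\mu$.

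For $w = s_1$ with $\mu_2 \geq 0$, Proposition \ref{T:12vals} fixes $\lambda_3 = -\mu_3$ and forces $\lambda_1 \leq -\mu_1 - 1$ except in the single instance $\mu = (-1,0,1)$, where the bound $\lambda_1 \leq -\mu_1 - 1/2$ combined with dominance $\lambda_1 \geq \lambda_2$ pins down $\lambda = (1/2, 1/2, -1) = (\mu_3/2, \mu_3/2, -\mu_3)$. For general $\mu$, the constraints $\lambda_1 + \lambda_2 = \mu_3$ and $\lambda_1 \geq \lambda_2$ imply $\lambda_1 \geq \mu_3/2$, which together with the upper bound $\lambda_1 \leq -\mu_1 - 1$ yields the asserted range $[(\mu_3/2, \mu_3/2, -\mu_3), \, -\mu - (1,-1,0)]$. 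For the reverse containment, for each admissible $\lambda$ I would exhibit a matrix in $(K_2)_\lambda$; a natural candidate is
\begin{equation*}
A = \begin{pmatrix} \pi^{-\lambda_1} & \pi^{\mu_1} & 0 \\ \pi^{\mu_2} & 0 & 0 \\ 0 & 0 & \pi^{\mu_3} \end{pmatrix}.
\end{equation*}
Since $g = 0$ here, the split exact sequence argument at the end of the proof of Proposition \ref{T:12vals} together with Lemma \ref{T:GL_2} applied to the $2 \times 2$ sub-isocrystal confirms that the Newton slope sequence is exactly $(\lambda_1, \mu_3 - \lambda_1, -\mu_3) = \lambda$.

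For $w = s_2$ with $\mu_2 \geq 0$, Proposition \ref{T:23vals} fixes $\lambda_1 = -\mu_1$ and, unless $\mu_2 + 1 = \mu_3$, forces $\lambda_3 \geq -\mu_3 + 1$. In the boundary case $\mu_2 + 1 = \mu_3$, part ($i$) of Proposition \ref{T:23vals} has already established $\mathcal{N}(G)_x = \{(-\mu_1, \mu_1/2, \mu_1/2)\}$. In the generic subcase $\mu_2 + 2 \leq \mu_3$, the constraints $\lambda_2 + \lambda_3 = \mu_1$ and $\lambda_2 \geq \lambda_3$ give $\lambda_3 \leq \mu_1/2$, yielding the range $[(-\mu_1, \mu_1/2, \mu_1/2), \, -\mu - (0,1,-1)]$. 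For the reverse containment I would exhibit, for each admissible $\lambda$, a matrix in $(K_3)_\lambda$ such as
\begin{equation*}
A = \begin{pmatrix} \pi^{\mu_1} & 0 & 0 \\ 0 & \pi^{-\lambda_3} & \pi^{\mu_2} \\ 0 & \pi^{\mu_3} & 0 \end{pmatrix},
\end{equation*}
and then use the $d = 0$ splitting of Proposition \ref{T:23vals} together with Lemma \ref{T:GL_2} applied to the $2 \times 2$ sub-isocrystal to check that the Newton slope sequence of $A$ is exactly $(-\mu_1, \mu_1 - \lambda_3, \lambda_3) = \lambda$.

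The principal obstacle is ensuring that for every valid slope sequence $\lambda$ in the allowed range (in particular every integer value of the one free coordinate $\lambda_1$ or $\lambda_3$), the candidate matrix lies in the correct Newton stratum rather than merely in $(K_i)_{\leq \lambda}$; this amounts to checking that the entry $a$ or $e$ has valuation exactly $-\lambda_1$ or $-\lambda_3$, which our explicit choices achieve. Once both cases are settled for $\mu_2 \geq 0$, applying $\psi$ identifies the boundary case $\mu_1 + 1 = \mu_2$ of $w = s_1$ (occurring for $\mu_2 \leq 0$) with the boundary case $\mu_2 + 1 = \mu_3$ of $w = s_2$ (occurring for $\mu_2 \geq 0$) under $(\mu_1, \mu_2, \mu_3) \mapsto (-\mu_3,-\mu_2,-\mu_1)$, thereby completing the description of $\mathcal{N}(G)_x$ for all $x$ with $\mathbf{a}_x \subset C^0$.
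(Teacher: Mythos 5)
Your overall strategy is the paper's: extract the constraints from Propositions \ref{T:12vals} and \ref{T:23vals} under the auxiliary hypothesis $\mu_2 \geq 0$, exhibit explicit matrices in each stratum for the reverse containment, and finish by applying $\psi$. The $w=s_1$ half is fine: your witness differs from the paper's only in taking $g=0$ and invoking the splitting plus Lemma \ref{T:GL_2}, and the $2\times 2$ block $\begin{pmatrix} a& b\\ d& 0\end{pmatrix}$ with $\val(a)=\lceil -\lambda_1\rceil$ does realize the slopes $(\lambda_1,\mu_3-\lambda_1)$, since $\lambda_1\geq \mu_3/2$ puts the middle vertex of the Newton polygon on or above the chord.

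The $w=s_2$ witness, however, is wrong, and this is a genuine gap in the only nontrivial part of that case (the reverse containment when $\mu_2+2\leq\mu_3$). With $b=c=d=0$ your matrix splits as $(F,\pi^{\mu_1}\sigma)\oplus\bigl(F^2,\begin{pmatrix} e& f\\ h& 0\end{pmatrix}\sigma\bigr)$ with $e=\pi^{-\lambda_3}$, $f=\pi^{\mu_2}$, $h=\pi^{\mu_3}$. By Lemma \ref{T:GL_2} the linear coefficient of the $2\times 2$ block is $-\ov{e}$, so the middle point of its Newton polygon sits at height $-\lceil -\lambda_3\rceil\leq\lambda_3\leq\mu_1/2$, i.e.\ on or below the chord from $(0,0)$ to $(2,\mu_1)$; hence the block has slopes $(\mu_1/2,\mu_1/2)$ for \emph{every} $\lambda_3$ in your range, and your matrix always lies in the minimal stratum $(-\mu_1,\tfrac{\mu_1}{2},\tfrac{\mu_1}{2})$ rather than in $(K_3)_{\lambda}$. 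The slope you need to dial with $\val(e)$ is the \emph{larger} slope $\lambda_2=\mu_1-\lambda_3$ of the block, so the correct choice is $\val(e)=\lambda_3-\mu_1$ (which indeed lies in $P^{\mu_2+1}$ exactly when $\lambda_3\geq-\mu_3+1$), not $\val(e)=-\lambda_3$; alternatively one can avoid the splitting altogether, as the paper does, by taking $d=\pi^{\mu_2+1}$, $b=\pi^{\lambda_3-\mu_2-1}$, $e=0$, so that $ae-bd$ has valuation exactly $\lceil\lambda_3\rceil$ and Proposition \ref{T:23vals} places the matrix in the stratum for $\lambda$. With that correction the rest of your argument (boundary cases, dominance bounds, and the $\psi$-symmetry matching the $\mu_1+1=\mu_2$ boundary of $w=s_1$ with the $\mu_2+1=\mu_3$ boundary of $w=s_2$) matches the paper.
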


\begin{proof}
Let $x=\pi^{\mu}s_1$, where $\mu_1 < \mu_2\leq \mu_3$, and further assume that $\mu_2 \geq 0$. First consider $\mu \neq (-1,0,1)$, and recall from Proposition \ref{T:12vals} that $\lambda_3= -\mu_3$ is fixed and $\lambda_1 \leq -\mu_1-1$ holds.  Combining these observations yields $(\frac{\mu_3}{2}, \frac{\mu_3}{2},-\mu_3)\leq \lambda \leq -\mu-(1,-1,0)$.  In the special case $\mu = (-1,0,1)$, recall from Equation \eqref{E:12gen} that the only possibility is that $\lambda =(\frac{1}{2}, \frac{1}{2},-1)$.

 Conversely,  if $\mu \neq (-1,0,1)$
consider \begin{equation*} \begin{pmatrix} \pi^{-\lambda_1}& \pi^{\mu_1} & 0\\ \pi^{\mu_2} & 0&0 \\ \pi^{\mu_3+1}&0&
\pi^{\mu_3} \end{pmatrix} \in (K_2)_{\lambda}.\end{equation*}  In the case in which $\mu=(-1,0,1)$, consider \begin{equation*} \begin{pmatrix} 0 & \pi^{-1} & 0\\ 1 & 0&0 \\ \pi^{\mu_3+1}&0&
\pi \end{pmatrix} \in (K_2)_{\lambda}.\end{equation*}

Now let $x=\pi^{\mu}s_2$, where  $\mu_1 \leq \mu_2< \mu_3$ and $\mu_2 \geq 0$. Proposition \ref{T:23vals} says that if $\mu_2+1=\mu_3$, then $\mathcal{N}(G)_x$ consists of the single slope sequence $(-\mu_1, \frac{\mu_1}{2}, \frac{\mu_1}{2})$.  If $\mu_2+1<\mu_3$, then we showed in Proposition \ref{T:23vals} that $\lambda_1= -\mu_1$ is fixed and $\lambda_3 \geq -\mu_3+1$. Combining these observations yields $ (-\mu_1, \frac{\mu_1}{2}, \frac{\mu_1}{2}) \leq \lambda \leq -\mu - (0,1,-1)$, for $\mu_2+1 < \mu_3$.

To demonstrate the reverse containment, we must provide two classes of examples.  In the case in which $\mu_2+1<\mu_3$, consider the following element: \begin{equation*}
\begin{pmatrix} \pi^{\mu_1}& \pi^{\lambda_3 - \mu_2-1} & 0\\ \pi^{\mu_2+1} & 0&\pi^{\mu_2} \\ 0& \pi^{\mu_3}&0
\end{pmatrix} \in (K_3)_{\lambda}.\end{equation*} If, on the other hand, $\mu_2+1=\mu_3$, we have that  \begin{equation*}
\begin{pmatrix} \pi^{\mu_1}& 0 & 0\\  \pi^{\mu_2+1} & 0&\pi^{\mu_2} \\ 0& \pi^{\mu_3}&0
\end{pmatrix} \in (K_3)_{\lambda}.\end{equation*}

Finally, observe that $\psi(s_1) = s_2$ and vice versa to complete our description of $\mathcal{N}(G)_x$ for all $\mu$.
\end{proof}

\subsection{Generic Newton slopes in the antidominant Weyl chamber}\label{S:genslopesC0}

Except in some boundary cases in which the associated alcove is adjacent the wall of the antidominant Weyl chamber, our calculations in the previous section demonstrate that we have the following generic Newton slope sequences:
\begin{equation}\label{E:roughgens} \nu_x = \begin{cases}
 -\mu - (1,0,-1), &\text{if $w = s_1s_2,\ s_2s_1,$ or $s_1s_2s_1$;}\\
 -\mu - (1,-1,0), &\text{if $w = s_1$;}\\
 -\mu - (0,1,-1), &\text{if $w = s_2$;}\\
 -\mu, &\text{if $w = 1$},
\end{cases}
\end{equation}
for $x$ such that $\mathbf{a}_x \subset C^0$.  The alcoves in the antidominant Weyl chamber for which $\nu_x$ satisfies one of the above equalities correspond to alcoves lying in the \emph{shrunken} Weyl chamber, as defined by Reuman in \cite{Reu}.  In fact, in $SL_3(F)$ in general, the values of $x \in \widetilde{W}$ for which $\nu_x$ is half-integral always correspond to alcoves that lie outside these shrunken Weyl chambers, but not conversely, as we have seen.

Except when $x$ is a pure translation, we have seen that $\nu_x \neq -\mu_{\text{dom}}$ if $\mathbf{a}_x \subset
C^0$.  For every other value of $w \in \widetilde{W}$, there is a correction term.  Note in addition that each
correction term is a positive coroot.  These observations illustrate one difference between calculating the
codimensions of the Newton strata using the Cartan decomposition $G(F) = KTK$, where $K= G(\mathcal{O})$, and
the affine Bruhat decomposition $I\widetilde{W}I$.  The generic Newton slope sequence associated to a particular
double coset $K\pi^{\mu}K$ is always given by $-\mu_{\text{dom}}$, using the conventions of this paper, since
Mazur's inequality says that $\nu_x \leq -\mu_{\text{dom}}$, but we have $\pi^{\mu} \in K\pi^{\mu}K$. When using
the affine Bruhat decomposition, our calculations demonstrate that this initial guess for the generic Newton
slope sequence is not always correct.

\subsection{Poset of Newton slopes $\mathcal{N}(G)_x$ for $SL_3(F)$}\label{S:N(G)_x}

Similar calculations can be performed to determine the generic Newton slope sequences and the posets $\mathcal{N}(G)_x$ for $\mathbf{a}_x$ lying in the remaining Weyl
chambers.  The results depend very much on the Weyl chamber in consideration.

To compute the posets $\mathcal{N}(G)_x$ for all $x\in \widetilde{W}$, we proceed in several steps as outlined in the reduction arguments from Section \ref{S:reduction}.  First, compute $(xI)_{\leq \lambda}$ for each $x \in \widetilde{W}$ such that $\mathbf{a}_x \subset s_1(C^0)$ and $\mu_1 \geq 0$ as indicated in Section \ref{S:valpolysB}.  One immediate corollary of this calculation will be the generation of a list of generic Newton slope sequences associated to these alcoves.  Now, as in Section \ref{S:slopesC0}, we can explicitly find matrices that lie in $(xI)_{\lambda}$ for all $\lambda \leq \nu_x$.  Once we have the description for $\mathcal{N}(G)_x$, where $x$ satisfies $\mathbf{a}_x \subset s_1(C^0)$ and $\mu_1 \geq 0$, apply the reflection $\psi$ from Lemma \ref{T:reduction} that interchanges the two simple roots.  The result will be a description of $\mathcal{N}(G)_x$ for $x$ such that $\mathbf{a}_x \subset s_2(C^0)$ and $\mu_3 \leq 0$.  Now, if we apply to these alcoves the rotation by 120 degrees about the center of the base alcove, given by $\varphi$ in Lemma \ref{T:reduction}, this completes our description of $\mathcal{N}(G)_x$ for $x$ such that $\mathbf{a}_x \subset s_1(C^0)$.  Finally, using that $\varphi$ induces the identity on $\mathcal{N}(G)$, we can extend the results for the two adjacent Weyl chambers $C^0$ and $s_1(C^0)$ to the remaining four Weyl chambers.

We omit the details of these calculations, but we include the resulting descriptions for both $\nu_x$ and $\mathcal{N}(G)_x$ in Tables \ref{Ta:s_12} through \ref{Ta:s_2} at the conclusion of the paper.  Let $x = \pi^{\mu}w$, where $\mu=(\mu_1,\mu_2,\mu_3)$ and $w \in W$.  Recall from Theorem \ref{T:translation} that if $x = \pi^{\mu}$, then $\nu_x = -\mu_{\text{dom}}$ and $\mathcal{N}(G)_x = \{\nu_x\}$.  Since this poset is relatively uninteresting, we do not create a separate table for $w=1$.  For the other five cases, we organize the results according to the finite Weyl part $w$.  We first list the generic Newton slope sequences $\nu_x$. The posets $\mathcal{N}(G)_x$ then consist of all elements $\lambda \in \mathcal{N}(G)$ that satisfy the indicated properties, where we write $\nu_x = (\nu_1, \nu_2,\nu_3)$.  In addition, when expressing elements of $\widetilde{W}$ as products of the generators, we write $s_{121}$ for $s_1s_2s_1$, etc.

Inside the shrunken Weyl chambers, the
correction term for a given alcove, if any, is a coroot of the three forms appearing in Equation \eqref{E:roughgens}.  As we have seen, in the antidominant Weyl chamber, every value of $x$ gives rise to
a correction term, except the pure translation.  In all other Weyl chambers, this is not the case.  There are
increasingly fewer correction terms as the Weyl chambers get farther from the antidominant chamber.  In fact,
there are no necessary correction terms for the generic Newton slope sequences associated to alcoves in the
dominant Weyl chamber $s_{121}(C^0)$.  In the dominant Weyl chamber, the initial estimate $\nu_x = -\mu_{\text{dom}}$ is always correct.  We conjecture that this phenomenon is a general pattern (see Conjecture \ref{T:domconj}).  Other patterns among the descriptions for $\nu_x$ and $\mathcal{N}(G)_x$ for $SL_3(F)$ exist, but we cannot yet
formulate a precise conjecture of this sort.
 
\end{section}

\begin{section}{Proofs of Theorem \ref{T:main} and Corollary \ref{T:maincor}}\label{S:codimcalc}

\subsection{Proof of Theorem \ref{T:main} and Corollary \ref{T:maincor}, case A}\label{S:thmproof}
Fix $x=\pi^{\mu}w \in \widetilde{W}$, where $\mu=(\mu_1,\mu_2,\mu_3)$.  We choose an integer $N$ such that $(xI)_{\leq \lambda}=\rho_N^{-1}\rho_N(xI)_{\leq \lambda}$ for any  $\lambda \in
\mathcal{N}(G)_x$, where we recall from Section \ref{S:admis} that $\rho_N:xI \twoheadrightarrow xI/P^N$ is the map that truncates the power series entries of $xI$ at level $P^N$.  We explicitly describe the geometric structure of the closed subscheme $\rho_N(xI)_{\leq \lambda}$ in $xI/P^N$, and our description will yield a dimension formula.

\vskip 5 pt \textbf{Case A}: $\mathbf{a}_x \subset C^0$ and $\mu_2 \geq 0$.
\vskip 5 pt

We illustrate the argument by considering the specific example of case IA.  In this case, we argue that $\rho_N(xI)_{\leq \lambda}$ is actually a fiber bundle over an irreducible affine scheme, having irreducible fibers.  Let $x = \pi^{\mu}s_1s_2$, where $\mathbf{a}_x \subset C^0$ and $\mu_2 \geq 0$.  Let us begin by analyzing subcase ($i$).  If we further assume that $\mu_2 \neq \mu_3$, then $-\mu_3+1 \leq \lambda_3 \leq -\mu_2+1$, and recall from \eqref{E:123i} that we have the following description of $(xI)_{\leq \lambda}$: \begin{equation}\label{E:123calc} (xI)_{\leq \lambda} = \left\{ \begin{pmatrix} a& b& c\\d&e&f\\g&h&i \end{pmatrix} \in xI \biggm| a \in P^{-\lambda_1} \ \text{and}\ \begin{vmatrix} a & b\\ d& e\end{vmatrix} \in P^{\lambda_3}\right\}. \end{equation} Furthermore, by Equation \eqref{E:123gen}, we see that $\nu_x = (-\mu_1-1,-\mu_2,-\mu_3+1)$ in this case.  In general we denote $\nu_x=(\nu_1,\nu_2,\nu_3)$.

Let us roughly outline the structure of the argument.  The codimension of $(xI)_{\leq \lambda}$ in $xI$ will be
given by the codimension in \begin{equation*}X:= P^{\mu_1+1} \times P^{\mu_1+1} \times P^{\mu_2}_{\times} \times
P^{\mu_2}\end{equation*} of  \begin{equation*}X_{\leq \lambda}:= \{ (a,b,d,e) \in X \mid a \in P^{-\lambda_1},\ ae-bd
\in P^{\lambda_3}\}.\end{equation*}  Consider the projection map \begin{align*} p:X & \longrightarrow Y:=
P^{\mu_1+1}\times P^{\mu_2}_{\times} \times P^{\mu_2} \\ (a,b,d,e) & \mapsto (a,d,e) \phantom{ P^{\mu_1+1}\times
P^{\mu_2}_{\times} \times P^{\mu_2}}, \end{align*} and consider the restriction of $p$ to $X_{\leq \lambda}$
\begin{equation*}p_{\lambda}: X_{\leq \lambda}\rightarrow Y_{\leq \lambda} := P^{-\lambda_1}\times P^{\mu_2}_{\times}
\times P^{\mu_2}.\end{equation*}  Observe that $Y_{\leq \lambda}$ has codimension $-\lambda_1-(\mu_1+1)$ in $Y$.
Further, the fiber of $p_{\lambda}$ over the point $(a,d,e) \in Y_{\leq \lambda}$ is a coset of the form $aed^{-1} +
P^{\lambda_3-\mu_2}$, so that the fibers of $p_{\lambda}:X_{\leq \lambda} \rightarrow Y_{\leq \lambda}$ are cosets of
$P^{\lambda_3-\mu_2}$ in the fiber $P^{\mu_1+1}$ of the map $p:X \rightarrow Y$.  Therefore, $(xI)_{\leq
\lambda}$ is irreducible, having codimension given by  \begin{align*}\codim((xI)_{\leq \lambda} \subseteq xI) &=
\codim(X_{\leq \lambda} \subseteq X)\notag \\ \phantom{\codim((xI)_{\lambda} \subseteq xI)}&=  \lceil
-\lambda_1-(\mu_1+1)\rceil + \lceil(\lambda_3 - \mu_2)-(\mu_1+1)\rceil \notag\\
\phantom{\codim(X_{\leq \lambda} \subseteq X)} &= \lceil \nu_1-\lambda_1 \rceil + \lceil -\nu_3+\lambda_3\rceil
\notag \\ \phantom{\codim(X_{\leq \lambda} \subseteq X)}&= \sum_{i=1}^2 \lceil \langle \omega_i,\nu_x-
\lambda\rangle\rceil \\ \phantom{\codim(X_{\leq \lambda} \subseteq X)}&=
\length_{\mathcal{N}(G)_x}[\lambda,\nu_x].\end{align*} To obtain the last equality, we recall from Equation \eqref{E:N(G)123} that there is indeed a chain of the specified length in $\mathcal{N}(G)_x$.  

Further, since $(xI)_{\lambda}$ is open in $(xI)_{\leq \lambda}$, and since $(xI)_{\leq \lambda}$ is irreducible in $xI$, we see that the closure of the Newton stratum $(xI)_{\lambda}$ in $xI$ is precisely $(xI)_{\leq \lambda}$.  Therefore, if we have two Newton polygons $\lambda_1 < \lambda_2$ which are adjacent in the poset $\mathcal{N}(G)_x$, then we can compute that the codimension of the smaller Newton stratum in the closure of the larger is given by  \begin{align*}\codim((xI)_{\lambda_1} \subseteq (xI)_{\leq \lambda_2}) &=
\codim((xI)_{\leq \lambda_1} \subseteq (xI)_{\leq \lambda_2})\notag \\ \phantom{\codim((xI)_{\lambda_1} \subseteq (xI)_{\leq \lambda_2})}&= \codim((xI)_{\leq \lambda_1} \subseteq xI) - \codim((xI)_{\leq \lambda_2} \subseteq xI) \notag\\
\phantom{\codim((xI)_{\lambda_1} \subseteq (xI)_{\leq \lambda_2})} &=
\length_{\mathcal{N}(G)_x}[\lambda_1,\nu_x] - \length_{\mathcal{N}(G)_x}[\lambda_2,\nu_x] \notag\\
\phantom{\codim((xI)_{\lambda_1} \subseteq (xI)_{\leq \lambda_2})} &= 1.\end{align*} To make all of these arguments rigorous, we of course need to use the admissibility of $(xI)_{\leq
\lambda}$.  By choosing $N \geq \mu_3+1$ and replacing $P^{j}$ everywhere by $P^j/P^N$, the result will follow
for case IA, subcase ($i$) with $\mu_2 \neq \mu_3$.

Subcase ($ii$) and the case $\mu_2 = \mu_3$ for case IA are handled similarly, as are all other cases.  We mention that in the special case $\mu_2=\mu_3$, we have $\nu_3=-\mu_3+\frac{1}{2}$, and the value $\lceil \lambda_3-\nu_3\rceil$ yields an overestimate for the codimension.  In particular, for $\lambda_3=\nu_3 + \frac{1}{2}$, we have $\lceil \lambda_3-\nu_3\rceil = 1$, even though $\codim((xI)_{\leq \lambda}\subseteq xI) = \lceil \nu_1-\lambda_1 \rceil$ in this case.  Therefore, in case IA, subcase ($i$) with $\mu_2=\mu_3$, we have $\codim ((xI)_{\leq \lambda} \subseteq xI)= \left( \sum^2_{i=1}\lceil \langle \omega_i, \nu_x-\lambda \rangle\rceil \right)-1$.  We omit the details for the remaining arguments since they all proceed in a similar fashion, but we highlight some subtle differences among them.

\begin{lemma}\label{T:mult}
Consider the multiplication map given by \begin{align*} m: \mathcal{O}/P^n \times \mathcal{O}/ P^n & \rightarrow \mathcal{O}/ P^n \notag \\ (x,y) & \mapsto xy.\end{align*}  Suppose $z \in P^k_{\times}$ with $0\leq k<n$.  Then the fiber of $m$ over $z$ is of the form \begin{equation*}m^{-1}(z+P^n) = \coprod\limits_{0\leq j \leq k} S_j, \end{equation*} where each $S_j$ is a fiber bundle over $P^j_{\times}/P^n$, having fibers isomorphic to $P^{n-j}/P^n$.  If, on the other hand, $z \in P^n$, then the fiber of $m$ over $z$ is of the form \begin{equation*}m^{-1}(z+P^n) = \bigcup\limits_{0\leq j \leq n}\left( P^j/P^n \times P^{n-j}/P^n \right). \end{equation*} In either case, the fiber of $m$ over a point $z \in \mathcal{O}/P^n$ is a union of irreducible spaces of codimension $n$ in $\mathcal{O}/P^n \times \mathcal{O}/P^n$.
\end{lemma}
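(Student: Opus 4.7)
The plan is to identify $\mathcal{O}/P^n$ with the affine space $\mathbb{A}^n$ over $\ov{k}$ via $\pi$-adic coefficients, so that $P^j/P^n$ is a linear subspace of dimension $n-j$ and $P^j_{\times}/P^n$ is the open subscheme where the $\pi^j$-coefficient is nonzero. The source of $m$ is then $\mathbb{A}^{2n}$, and I would attack both cases of the lemma by stratifying according to the pair of valuations $(\val(x),\val(y))$ and reading off the fiber directly.

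For the first case, with $z \in P^k_{\times}$ and $0 \le k < n$, the relation $xy \equiv z \pmod{P^n}$ forces $\val(xy)=k$ and hence $\val(x)+\val(y)=k<n$, so neither factor vanishes mod $P^n$, and $j := \val(x)$ must range over $\{0,1,\ldots,k\}$ with $\val(y)=k-j$. For each such $j$ and each $x$ with $\val(x)=j$, lifting to $\mathcal{O}$ and inverting the unit part of $x$ produces a unique element $z/x \in P^{k-j}_{\times} \subset \mathcal{O}$, and the set of admissible $y$ is exactly the single coset $z/x + P^{n-j}$ taken modulo $P^n$. This exhibits $S_j$ as a (trivial) $\mathbb{A}^{j}$-bundle over $P^j_{\times}/P^n$, and disjointness of the pieces $S_j$ for distinct $j$ is automatic because $\val(x)$ takes distinct values on them.

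For the second case $z \in P^n$, the condition $xy \equiv 0 \pmod{P^n}$ reduces to $\val(x)+\val(y) \ge n$, which holds if and only if there exists $j \in \{0,\ldots,n\}$ with $\val(x) \ge j$ and $\val(y) \ge n-j$; this is precisely the union $\bigcup_j P^j/P^n \times P^{n-j}/P^n$ in the statement. Unlike in the first case, this union is not disjoint---the origin, for instance, lies in every piece---which is why the lemma writes $\bigcup$ rather than $\coprod$.

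The dimension count is then uniform: each $S_j$ is an affine bundle of rank $j$ over a base of dimension $n-j$, so $\dim S_j = n$, and each product $P^j/P^n \times P^{n-j}/P^n$ has dimension $(n-j)+j = n$. Since the ambient $\mathcal{O}/P^n \times \mathcal{O}/P^n$ has dimension $2n$, every piece has codimension $n$, and irreducibility of each piece is immediate---either as an affine bundle over the irreducible open $P^j_{\times}/P^n$, or as a product of two affine spaces. I do not anticipate any substantive obstacle; the argument is pure book-keeping once the valuation stratification is set up, with the only mild subtlety being the distinction between the generic fiber behavior ($z\neq 0$) and the degenerate fiber over $z=0$.
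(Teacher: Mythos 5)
Your argument is correct: the paper's own proof of this lemma is literally the single word ``Clear,'' so there is nothing to compare against, and your valuation stratification by $j=\val(x)$ (with the explicit coset $u^{-1}z\pi^{-j}+P^{n-j}$ of admissible $y$ in the first case, and the elementary equivalence $\val(x)+\val(y)\ge n \iff \exists j$ with $\val(x)\ge j,\ \val(y)\ge n-j$ in the second) is precisely the routine verification the author intended to leave to the reader. The bookkeeping is sound throughout, including the dimension count $\dim S_j=(n-j)+j=n$ and the observation that disjointness holds in the first case but not over $z\in P^n$.
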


\begin{proof} Clear. \end{proof}

We now examine case IIIA, which proceeds in a slightly different manner than the other cases.  Let
$x=\pi^{\mu}s_1s_2s_1$ with $\mathbf{a}_x \subset C^0$ and $\mu_2 \geq 0$.  We begin with subcase ($i$), in which we recall that if $-\mu_3+1 \leq
\lambda_3 \leq -\mu_2$, then Equation \eqref{E:123calc} still describes $(xI)_{\leq \lambda}$.  Here, the main
difference is that we do not know the valuation of either $b$ or $d$.  Let  \begin{equation*}X:= P^{\mu_1+1}
\times P^{\mu_1+1} \times P^{\mu_2+1} \times P^{\mu_2}_{\times},\end{equation*} and let $X_{\leq \lambda}$ be defined
as in the previous argument.  Consider the projection map \begin{align*} p:X & \longrightarrow Z:=
P^{\mu_1+1}\times P^{\mu_2}_{\times} \\ (a,b,d,e) & \mapsto (a,e),\end{align*}  and consider the restriction of
$p$ to $X_{\leq \lambda}$ \begin{equation*}p_{\lambda}: X_{\leq \lambda}\rightarrow Z_{\leq \lambda} := P^{-\lambda_1}\times
P^{\mu_2}_{\times},\end{equation*}  Observe as before that $Z_{\leq \lambda}$ has codimension $-\lambda_1-(\mu_1+1)$
in $Z$. In this case, however, the fiber of $p_{\lambda}$ over the point $(a,e) \in Z_{\leq \lambda}$ is isomorphic to the fiber of the multiplication map $m$ from Lemma \ref{T:mult} over the point $ae + P^{\lambda_3}$, after appropriate scaling.   In particular, for any $(a,e) \in Z_{\leq \lambda}$ the fiber of $p_{\lambda}$ is reducible, and each irreducible component has codimension $\lambda_3+\mu_3-2$ in the fiber $P^{\mu_1+1}\times P^{\mu_2+1}$ of $p$ over $(a,e)\in Z$.  Now we define \begin{equation*}X_{\lambda}:= \{ (a,b,d,e) \in X \mid a \in P^{-\lambda_1}_{\times},\ ae-bd
\in P^{\lambda_3}_{\times}\}\subset X_{\leq \lambda},\end{equation*} where the reader will observe that while $X_{\lambda}$ consists of elements having Newton polygon $\lambda$, it is only a subspace of the Newton stratum corresponding to $\lambda$.  Nevertheless, to compute the closure of the Newton stratum, it suffices to work with the subspace $X_{\lambda}$, which justifies the notation.  If we define locally closed subsets of $X_{\lambda}$ and $X_{\leq \lambda}$ as follows: \begin{equation*}X^j_{\lambda}:= \{ (a,b,d,e) \in X \mid a \in P^{-\lambda_1}_{\times},\ ae-bd
\in P^{\lambda_3}_{\times},\ b \in P^j_{\times}\},\ \text{and}\end{equation*} \begin{equation*}X^j_{\leq \lambda}:= \{ (a,b,d,e) \in X \mid a \in P^{-\lambda_1},\ ae-bd
\in P^{\lambda_3},\ b\in P^j_{\times}\},\end{equation*} then we can write $X_{\leq \lambda}$ as a disjoint union of locally closed subsets of $X$: \begin{equation}\label{E:union}X_{\leq \lambda} = \coprod\limits_{j=\mu_1+1}^{-\lambda_1-1} X^j_{\leq \lambda} \quad \text{or} \quad X_{\leq \lambda} = \coprod\limits_{j=\mu_1+1}^{\lambda_3-\mu_2-1} X^j_{\leq \lambda}.\end{equation} Here, the two cases correspond to the two possibilities in Lemma \ref{T:mult}, where we must consider whether $\val(ae) < \lambda_3$ or $\val(ae)\geq \lambda_3$.  Therefore, we see that there are either $\lceil -\lambda_1-\mu_1-1\rceil$ or $\lceil \lambda_3+\mu_3-1\rceil$ components, depending on which of these integers is smaller.  Furthermore, each of the components $X^j_{\leq \lambda}$ are irreducible in $X$ by our discussion above, and are therefore precisely the closures of the $X^j_{\lambda}$, respectively.  Since the union in \eqref{E:union} is finite in either case, we see that the closure of $X_{\lambda}$ is $X_{\leq \lambda}$, and therefore the closure of $(xI)_{\lambda}$ is $(xI)_{\leq \lambda}$.  The codimension calculations proceed as before, as does the means by which we can make this argument rigorous.  The reader should also note that case IIIA, subcase ($ii$) should be handled in the same manner as subcase ($i$).  In particular, in case IIIA, the scheme $(xI)_{\leq \lambda}$ is reducible for all $\lambda \leq -\mu-(2,0,-2)$.

We point out one further difference in case IIIA.  Consider the
point $(a_{\mu_1+1},e_{\mu_2}) \in Z_{\leq \lambda}$, where $a_i$ and $e_i$ denote the coefficients of $\pi^i$ in $a$ and $e$, respectively.  Since in case IIIA we have
$\min\{\val(ae)\}=\mu_1+\mu_2+1$ and $\min\{\val(bd)\}=\mu_1+\mu_2+2$, we see that the fiber over
this point is empty.  In fact, as we saw in the proof of Proposition \ref{T:N(G)13}, this phenomenon occurs whenever $\lambda<\nu_x=-\mu-(1,0,-1)$ and $\lambda \nleq
- \mu-(2,0,-2)$. 

\subsection{Proof of Theorem \ref{T:main} and Corollary \ref{T:maincor}, case B}\label{S:corproof}

Recall from Remark \ref{T:valpolysB} that the codimensions in case B agree with the codimensions of the
Newton strata in $x'I'$, where $x'=\pi^{{\mu}_{\text{dom}^*}}(s_1^{-1}ws_1)$ and $I'=s_1^{-1}Is_1$.  Note that
the theorem once more holds trivially in case VI, since for $w=1$, we have $w':=s_1^{-1}ws_1=1$, and so $\mathcal{N}(G)_x =
\{-\mu_{\text{dom}}\}$.  A routine calculation computes $x'I'$ for each of the five remaining values of $w' \in W$:
\begin{align*}
(\text{I})\ \ &x'I' = \begin{pmatrix} P^{\mu_2}&P^{\mu_2}_{\times}&P^{\mu_2}\\P^{\mu_1+1}&P^{\mu_1+1}&P^{\mu_1}_{\times} \\P^{\mu_3}_{\times}&P^{\mu_3+1}&P^{\mu_3}\end{pmatrix} & (\text{IV})\ \ &x'I' = \begin{pmatrix} P^{\mu_2}&P^{\mu_2}_{\times}&P^{\mu_2}\\P^{\mu_1}_{\times}&P^{\mu_1+1}&P^{\mu_1} \\P^{\mu_3+1}&P^{\mu_3+1}&P^{\mu_3}_{\times}\end{pmatrix}\\
(\text{II})\ \ &x'I' = \begin{pmatrix} P^{\mu_2+1}&P^{\mu_2+1}&P^{\mu_2}_{\times}\\P^{\mu_1}_{\times}&P^{\mu_1+1}&P^{\mu_1} \\P^{\mu_3}&P^{\mu_3}_{\times}&P^{\mu_3}\end{pmatrix} & (\text{V})\ \ &x'I' = \begin{pmatrix} P^{\mu_2+1}&P^{\mu_2+1}&P^{\mu_2}_{\times}\\P^{\mu_1}&P^{\mu_1}_{\times}&P^{\mu_1} \\P^{\mu_3}_{\times}&P^{\mu_3+1}&P^{\mu_3}\end{pmatrix}.\\
(\text{III})\ \ &x'I' = \begin{pmatrix} P^{\mu_2}_{\times}&P^{\mu_2+1}&P^{\mu_2}\\P^{\mu_1+1}&P^{\mu_1+1}&P^{\mu_1}_{\times} \\P^{\mu_3}&P^{\mu_3}_{\times}&P^{\mu_3}\end{pmatrix} & \phantom{}
\end{align*}
Arguments similar to those used in case A apply to show that $(x'I')_{\leq
\lambda'}$ is admissible and has the structure of a fiber bundle over an irreducible base space, having non-empty fibers over every point.  Note that case VB should be handled in the same way as IIIA, in which the fibers are reducible and look like fibers of the requisite analog of the multiplication map from Lemma \ref{T:mult}.

Using the arguments outlined above, the reader can check that for any $x \in \widetilde{W}$ satisfying the
conditions of cases A or B, we have \begin{equation*} \codim\left( (xI)_{\leq \lambda} \subseteq  xI\right)  =
\length_{\mathcal{N}(G)_x}[\lambda,\nu_x].\end{equation*}  Lemmas \ref{T:reduction} and \ref{T:coset} then imply
that for all $x \in \widetilde{W}$, we have \begin{equation*} \codim\left( (IxI)_{\leq \lambda} \subseteq  IxI\right)  =
\length_{\mathcal{N}(G)_x}[\lambda,\nu_x]. \end{equation*}  We have also shown that for any $x \in \widetilde{W}$ satisfying the conditions of cases A or B, the closure of $(xI)_{\lambda}$ in $xI$ is precisely $(xI)_{\leq \lambda}$, in which case the codimensions between adjacent strata always equals 1.  Finally, observe that the proofs of Lemmas \ref{T:reduction} and \ref{T:coset} enable us to extend these observations about the closures of the Newton strata $(xI)_{\lambda}$ to $(IxI)_{\lambda}$ and finally to all $x\in \widetilde{W}$.  Therefore, Theorem \ref{T:main} holds for any $x \in \widetilde{W}$.

The reader will verify the remaining root-theoretic versions of the codimension formula provided in Corollary \ref{T:maincor} during the course of the proof of Theorem \ref{T:main}.  Extend the analysis for cases A and B to the rest of $C^0$ and $s_1(C^0)$ by applying the reflection $\psi$ that interchanges the two simple roots, discussed in Lemma \ref{T:reduction}.  We may then extend our calculations to the remaining Weyl chambers by applying the rotations $\varphi$ and $\varphi^2$ to the Weyl chambers $C^0$ and $s_1(C^0)$, where $\varphi$ is the rotation by 120 degrees about the center of the base alcove defined in Lemma \ref{T:reduction}.
\end{section}

\newpage

\begin{table}[h]
  \begin{center}\renewcommand{\arraystretch}{1.25}
     \begin{tabular}{| c | l | l |} \hline
        Weyl chamber & $\nu_x$ & $\mathcal{N}(G)_x$ \\ \hline\hline
          $C^0$   & $-(\mu+(1, -\frac{1}{2},-\frac{1}{2}))_{\text{dom}},\ $ if $\mu_2 = \mu_3$ & $\{\lambda \leq \nu_x\}$ \\ \cline{2-3}
                  & $-(\mu+(1,0,-1))_{\text{dom}},\ $ otherwise & $\{ \lambda \leq \nu_x\}$ \\ \hline\hline
          $s_1(C^0)$ & $-(\mu + (\frac{1}{2},0, -\frac{1}{2}))_{\text{dom}},\ $ if $\mu_1+1=\mu_3$ &  $\{\lambda \leq \nu_x\}$ \\ \cline{2-3}
               & $-(\mu +(1,0,-1))_{\text{dom}},\ $ otherwise  & $\{ \lambda \leq \nu_x\}$ \\ \hline\hline
          $s_2(C^0)$ & $-(\mu+(1,-1,0))_{\text{dom}}$ & $\{ \lambda \leq \nu_x\}$\\ \hline\hline
          $s_{12}(C^0)$ & $-\mu_{\text{dom}}$ &  $\{ \lambda \leq \nu_x\}$\\ \hline\hline
          $s_{21}(C^0)$   & $-(\mu+(\frac{1}{2},-\frac{1}{2},0))_{\text{dom}},\ $ if $\mu_1+1=\mu_2$ &$\{ \lambda \leq \nu_x\}$ \\ \cline{2-3}
                  & $-(\mu+(1,-1,0))_{\text{dom}},\ $ otherwise & $\{ \lambda \leq \nu_x\}$ \\ \hline\hline
          $s_{121}(C^0)$ & $-\mu_{\text{dom}}$ & $\{ \lambda \leq \nu_x\}$ \\ \hline
     \end{tabular}
    \caption{$w = s_{12}$}\label{Ta:s_12}
  \end{center}
\end{table}

\begin{table}[h]
  \begin{center}\renewcommand{\arraystretch}{1.25}
     \begin{tabular}{| c | l | l |} \hline
        Weyl chamber & $\nu_x$ & $\mathcal{N}(G)_x$ \\ \hline\hline
          $C^0$   & $-(\mu+(\frac{1}{2},\frac{1}{2},-1))_{\text{dom}},\ $ if $\mu_1 = \mu_2$ & $\{\lambda \leq \nu_x\}$ \\ \cline{2-3}
                  & $-(\mu+(1,0,-1))_{\text{dom}},\ $ otherwise & $\{ \lambda \leq \nu_x\}$ \\ \hline\hline
           $s_1(C^0)$ & $-(\mu+(0,1,-1))_{\text{dom}}$ & $\{ \lambda \leq \nu_x\}$\\ \hline\hline
          $s_2(C^0)$ & $-(\mu + (\frac{1}{2},0, -\frac{1}{2}))_{\text{dom}},\ $ if $\mu_1+1=\mu_3$ &  $\{\lambda \leq \nu_x\}$ \\ \cline{2-3}
               & $-(\mu +(1,0,-1))_{\text{dom}},\ $ otherwise  & $\{ \lambda \leq \nu_x\}$ \\ \hline\hline
           $s_{12}(C^0)$   & $-(\mu+(0,\frac{1}{2},-\frac{1}{2}))_{\text{dom}},\ $ if $\mu_2+1=\mu_3$ &$\{ \lambda \leq \nu_x\}$ \\ \cline{2-3}
                  & $-(\mu+(0,1,-1))_{\text{dom}},\ $ otherwise & $\{ \lambda \leq \nu_x\}$ \\ \hline\hline
          $s_{21}(C^0)$ & $-\mu_{\text{dom}}$ &  $\{ \lambda \leq \nu_x\}$\\ \hline\hline
          $s_{121}(C^0)$ & $-\mu_{\text{dom}}$ & $\{ \lambda \leq \nu_x\}$ \\ \hline
     \end{tabular}
    \caption{$w = s_{21}$}\label{Ta:s_21}
  \end{center}
\end{table}

\clearpage

\begin{table}[h]
  \begin{center}\renewcommand{\arraystretch}{1.25}
     \begin{tabular}{| c | l | l |} \hline
        Weyl chamber & $\nu_x$ & $\mathcal{N}(G)_x$ \\ \hline\hline
          $C^0$   & $-(\mu+(1,0,-1))_{\text{dom}}$ & $\{(\nu_1, -\frac{\nu_1}{2}, -\frac{\nu_1}{2}) \leq \lambda \leq \nu_x\},\ $ if $\mu_1+1=\mu_2$ \\ \cline{3-3}
          &   &    $\{(-\frac{\nu_3}{2}, -\frac{\nu_3}{2}, \nu_3) \leq \lambda \leq \nu_x\},\ $ if $\mu_2+1=\mu_3$  \\ \cline{3-3}
          &    &  $\{\nu_x\} \cup \{\lambda \leq \nu_x-(1,0,-1)\},\ $ otherwise   \\ \hline\hline
          $s_1(C^0)$ & $-(\mu+(\frac{1}{2}, 0, -\frac{1}{2}))_{\text{dom}},\ $ if $\mu_1+1=\mu_3$ & $\{\nu_x\}$ \\ \cline{2-3}
                 &  $-(\mu+(1,0,-1))_{\text{dom}},\ $ otherwise & $\{(\nu_1,-\frac{\nu_1}{2}, -\frac{\nu_1}{2}) \leq \lambda \leq \nu_x\}$  \\ \hline\hline
          $s_2(C^0)$ &  $-(\mu+(\frac{1}{2}, 0, -\frac{1}{2}))_{\text{dom}},\ $ if $\mu_1+1=\mu_3$ &$\{\nu_x\}$ \\ \cline{2-3}
                     &  $-(\mu+(1,0,-1))_{\text{dom}},\ $ otherwise  &  $\{(-\frac{\nu_3}{2}, -\frac{\nu_3}{2},\nu_3) \leq \lambda \leq \nu_x\} $ \\ \hline\hline
          $s_{12}(C^0)$ & $-\mu_{\text{dom}}$ &  $\{(\nu_1, -\frac{\nu_1}{2}, -\frac{\nu_1}{2}) \leq \lambda \leq \nu_x\}$ \\ \hline\hline
          $s_{21}(C^0)$   &$-\mu_{\text{dom}}$ & $\{(-\frac{\nu_3}{2},-\frac{\nu_3}{2},\nu_3)\leq \lambda \leq \nu_x\}$ \\ \hline\hline                      $s_{121}(C^0)$ & $-\mu_{\text{dom}}$ & $\{\lambda \leq \nu_x\}$ \\ \hline
     \end{tabular}
    \caption{$w = s_{121}$}\label{Ta:s_121}
  \end{center}
\end{table}

\begin{table}[h]
  \begin{center}\renewcommand{\arraystretch}{1.25}
     \begin{tabular}{| c | l | l |} \hline
        Weyl chamber & $\nu_x$ & $\mathcal{N}(G)_x$ \\ \hline\hline
          $C^0$   & $-(\mu+(\frac{1}{2}, -\frac{1}{2}, 0))_{\text{dom}},\ $ if $\mu_1+1 = \mu_2$ & $\{\nu_x\}$ \\ \cline{2-3}
                  & $-(\mu+(1,-1,0))_{\text{dom}},\ $ otherwise & $\{(-\frac{\nu_3}{2}, -\frac{\nu_3}{2}, \nu_3) \leq \lambda \leq \nu_x\}$  \\ \hline\hline
          $s_1(C^0)$ & $-\mu_{\text{dom}}$ &  $\{(-\frac{\nu_3}{2}, -\frac{\nu_3}{2}, \nu_3) \leq \lambda \leq \nu_x\}$ \\ \hline\hline
          $s_2(C^0)$ & $-(\mu+(1,-1,0))_{\text{dom}}$ & $\{(\nu_1, -\frac{\nu_1}{2}, -\frac{\nu_1}{2}) \leq \lambda \leq \nu_x\},\ $ if $\mu_1=\mu_3$ \\ \cline{3-3}
                     &   & $\{\lambda \leq \nu_x\},\ $ otherwise \\ \hline\hline
          $s_{12}(C^0)$ & $-\mu_{\text{dom}}$ &  $\{(\nu_1, -\frac{\nu_1}{2}, -\frac{\nu_1}{2}) \leq \lambda \leq \nu_x\},\ $ if $\mu_2=\mu_3$ \\ \cline{3-3}
                        &   & $\{\lambda \leq \nu_x\},\ $ otherwise  \\ \hline\hline
          $s_{21}(C^0)$   & $-(\mu+(\frac{1}{2},-\frac{1}{2},0))_{\text{dom}},\ $ if $\mu_1+1=\mu_2$ & $\{\nu_x\}$ \\ \cline{2-3}
                  & $-(\mu+(1,-1,0))_{\text{dom}},\ $ otherwise & $\{(\nu_1, -\frac{\nu_1}{2}, -\frac{\nu_1}{2}) \leq \lambda \leq \nu_x\}$ \\ \hline\hline
          $s_{121}(C^0)$ & $-\mu_{\text{dom}}$ & $\{(\nu_1, -\frac{\nu_1}{2}, -\frac{\nu_1}{2}) \leq \lambda \leq \nu_x\}$ \\ \hline
     \end{tabular}
    \caption{$w = s_1$}\label{Ta:s_1}
  \end{center}
\end{table}

\clearpage

\begin{table}[h]
  \begin{center}\renewcommand{\arraystretch}{1.25}
     \begin{tabular}{| c | l | l |} \hline
        Weyl chamber & $\nu_x$ & $\mathcal{N}(G)_x$ \\ \hline\hline
          $C^0$   & $-(\mu+(0,\frac{1}{2}, -\frac{1}{2}))_{\text{dom}},\ $ if $\mu_2+1 = \mu_3$ & $\{\nu_x\}$ \\ \cline{2-3}
                  & $-(\mu+(0,1,-1))_{\text{dom}},\ $ otherwise & $\{(\nu_1, -\frac{\nu_1}{2}, -\frac{\nu_1}{2}) \leq \lambda \leq \nu_x\}$  \\ \hline\hline
          $s_1(C^0)$ & $-(\mu+(0,1,-1))_{\text{dom}}$ &  $\{(-\frac{\nu_3}{2}, -\frac{\nu_3}{2}, \nu_3) \leq \lambda \leq \nu_x\},\ $ if $\mu_1=\mu_3$ \\ \cline{3-3}
                     &     &  $\{\lambda \leq \nu_x\},\ $ otherwise    \\ \hline\hline
          $s_2(C^0)$ & $-\mu_{\text{dom}}$ & $\{(\nu_1, -\frac{\nu_1}{2}, -\frac{\nu_1}{2}) \leq \lambda \leq \nu_x\}$\\  \hline\hline
          $s_{12}(C^0)$ & $-(\mu+(0,\frac{1}{2},-\frac{1}{2}))_{\text{dom}},\ $ if $\mu_2+1=\mu_3$ &  $\{\nu_x\}$ \\ \cline{2-3}
                        & $-(\mu+(0,1,-1))_{\text{dom}},\ $ otherwise  & $\{(-\frac{\nu_3}{2},-\frac{\nu_3}{2},\nu_3) \leq \lambda \leq \nu_x\}$  \\ \hline\hline
          $s_{21}(C^0)$   &  $-\mu_{\text{dom}}$ & $\{(-\frac{\nu_3}{2},-\frac{\nu_3}{2},\nu_3) \leq \lambda \leq \nu_x\},\ $ if $\mu_1=\mu_2$ \\ \cline{3-3}
                         &   &  $\{ \lambda \leq \nu_x\},\ $ otherwise   \\ \hline\hline
          $s_{121}(C^0)$ & $-\mu_{\text{dom}}$ & $\{(-\frac{\nu_3}{2}, -\frac{\nu_3}{2}, \nu_3) \leq \lambda \leq \nu_x\}$ \\ \hline
     \end{tabular}
    \caption{$w = s_2$}\label{Ta:s_2}
  \end{center}
\end{table}

\bibliographystyle{amsplain}
\bibliography{references}

\providecommand{\bysame}{\leavevmode\hbox to3em{\hrulefill}\thinspace}
\providecommand{\MR}{\relax\ifhmode\unskip\space\fi MR }
\providecommand{\MRhref}[2]{%
  \href{http://www.ams.org/mathscinet-getitem?mr=#1}{#2}
}
\providecommand{\href}[2]{#2}
\begin{thebibliography}{10}

\bibitem{AG}
F.~Adreatta and E.~Goren, \emph{Hilbert modular varieties of low dimension},
  Geometric aspects of Dwork theory, vol.~I, Walter de Gruyter GmbH \& Co.,
  Berlin, 2004, pp.~113--175.

\bibitem{Bar}
I.~Barsotti, \emph{Analytical methods for abelian varieties in positive
  characteristic}, Colloq. Th{\'{e}}orie des Groupes Alg{\'{e}}briques
  (Bruxelles), 1962, pp.~77--85.

\bibitem{BF}
R.~Blache and R.~F{\'{e}}rard, \emph{Newton stratification for polynomials: the
  open stratum}, J. Number Theory \textbf{123} (2007), no.~2, 456--472.

\bibitem{BW}
O.~B{\"{u}}ltel and T.~Wedhorn, \emph{Congruence relations for {Shimura}
  varieties associated to some unitary groups}, J. Inst. Math. Jussieu
  \textbf{5} (2006), no.~2, 229--261.

\bibitem{Ch}
C.-L. Chai, \emph{Newton polygons as lattice points}, Amer. J. Math.
  \textbf{122} (2000), no.~5, 967--990.

\bibitem{dJO}
A.~de~Jong and F.~Oort, \emph{Purity of the stratification by {Newton}
  polygons}, J. Amer. Math. Soc. \textbf{13} (2000), no.~1, 209--241.

\bibitem{Dem}
M.~Demazure, \emph{Lectures on $p$-divisible groups}, Lecture notes in
  mathematics, vol. 302, Springer-Verlag, Berlin-New York, 1972.

\bibitem{GO}
E.~Goren and F.~Oort, \emph{Stratifications of {Hilbert} modular varieties}, J.
  Algebraic Geom. \textbf{9} (2000), no.~1, 111--154.

\bibitem{GKM}
M.~Goresky, R.~Kottwitz, and R.~MacPherson, \emph{Codimensions of root
  valuation strata}, math.RT/0601197, 2006.

\bibitem{GHKR}
U.~G{\"{o}}rtz, T.~Haines, R.~Kottwitz, and D.~Reuman, \emph{Dimensions of some
  affine {Deligne}-{Lusztig} varieties}, Ann. Sci. {\'{E}}cole Norm. Sup. (4)
  \textbf{39} (2006), no.~3, 467--511.

\bibitem{GHKRadlvs}
\bysame, \emph{Affine {Deligne}-{Lusztig} varieties in affine flag varieties},
  math.AG/0805.0045v1, 2008.

\bibitem{Gro}
A.~Grothendieck, \emph{Groupes de barsotti-tate et cristaux de
  {Dieudonn{\'{e}}}}, S{\'{e}}minaire de Math{\'{e}}matiques Sup{\'{e}}rieures
  {\'{E}}t{\'{e}} 1970 (Montr{\'{e}}al, Que.), no.~45, Les Presses de
  l'Universit{\'{e}} de Montr{\'{e}}al, 1974.

\bibitem{Hai}
T.~Haines, \emph{Introduction to {Shimura} varieties with bad reduction of
  parahoric type}, Harmonic analysis, the trace formula, and {Shimura}
  varieties (Providence, RI), Clay Math. Proc. (4), Amer. Math. Soc., 2005,
  pp.~583--642.

\bibitem{Har}
S.~Harashita, \emph{Ekedahl-{Oort} strata and the first {Newton} slope strata},
  J. Algebraic Geom. \textbf{16} (2007), no.~1, 171--199.

\bibitem{HT}
M.~Harris and R.~Taylor, \emph{The geometry and cohomology of some simple
  {Shimura} varieties}, Annals of Mathematics Studies, vol. 151, Princeton
  University Press, Princeton, NJ, 2001.

\bibitem{Kat}
N.~M. Katz, \emph{Slope filtration of {$F$}-crystals}, Journ{\'{e}}es de
  G{\'{e}}om{\'{e}}trie Alg{\'{e}}brique de Rennes (Rennes, 1978) Vol. I,
  Ast{\'{e}}risque, No. 63, Soci{\'{e}}t{\'{e}} Math{\'{e}}matique de France,
  Paris, 1979, pp.~113--163.

\bibitem{Kob}
N.~Koblitz, \emph{$p$-adic variation of the zeta-function over families of
  varieties defined over finite fields}, Compositio Math. \textbf{31} (1975),
  no.~2, 119--218.

\bibitem{KotIsoI}
R.~Kottwitz, \emph{Isocrystals with additional structure}, Compositio Math.
  \textbf{56} (1985), no.~2, 201--220.

\bibitem{KotIsoII}
\bysame, \emph{Isocrystals with additional structure {II}}, Compositio Math.
  \textbf{109} (1997), no.~3, 255--339.

\bibitem{KotNewtStrata}
\bysame, \emph{Dimensions of {Newton} strata in the adjoint quotient of
  reductive groups}, Pure Appl. Math. Q. \textbf{2} (2006), no.~3, 817--836.

\bibitem{LiO}
K.-Z. Li and F.~Oort, \emph{Moduli of supersingular abelian varieties}, Lecture
  notes in mathematics, vol. 1680, Springer-Verlag, Berlin, 1998.

\bibitem{Man}
J.~I. Manin, \emph{Theory of commutative formal groups over fields of finite
  characteristic}, Uspehi Mat. Nauk \textbf{18} (1963), no.~6 (114), 3--90.

\bibitem{NO}
P.~Norman and F.~Oort, \emph{Moduli of abelian varities}, Ann. of Math. (2)
  \textbf{112} (1980), no.~3, 413--439.

\bibitem{Omoduli&NP}
F.~Oort, \emph{Moduli of abelian varities and {Newton} polygons}, C. R. Acad.
  Sci. Paris S{\'{e}}r. {I} Math. \textbf{312} (1991), no.~5, 385--389.

\bibitem{Omoduliposchar}
\bysame, \emph{Moduli of abelian varieties in positive characteristic},
  Barsotti Symposium in Algebraic Geometry (Abano Terme, 1991), Perspect.
  Math., vol.~15, Academic Press, San Diego, CA, 1994, pp.~253--276.

\bibitem{Onp&formalgps}
\bysame, \emph{Newton polygons and formal groups: conjectures by {Manin} and
  {Grothendieck}}, Ann. of Math. (2) \textbf{152} (2000), no.~1, 183--206.

\bibitem{Onpinmoduli}
\bysame, \emph{Newton polygon strata in the moduli space of abelian varieties},
  Moduli of abelian varieties (Texel Island, 1999), Progr. Math., vol. 195,
  Birkh{\"{a}}user, Basel, 2001, pp.~417--440.

\bibitem{Ofoliations}
\bysame, \emph{Foliations in moduli spaces of abelian varieties}, J. Amer.
  Math. Soc. \textbf{17} (2004), no.~2, 267--296.

\bibitem{Onp&pdiv}
\bysame, \emph{Newton polygons and p-divisible groups: a conjecture by
  {Grothendieck}}, Automorphic forms {I}, Ast{\'{e}}risque, No. 298,
  Soci{\'{e}}t{\'{e}} Math{\'{e}}matique de France, Paris, 2005, pp.~255--269.

\bibitem{RapNewton}
M.~Rapoport, \emph{On the {Newton} stratification}, S{\'{e}}minaire Bourbaki,
  Vol. 2001/2002, Ast{\'{e}}risque, No. 290, Soci{\'{e}}t{\'{e}}
  Math{\'{e}}matique de France, Paris, 2003, pp.~207--224.

\bibitem{RapShimura}
\bysame, \emph{A guide to the reduction modulo $p$ of {Shimura} varieties},
  Automorphic forms {I}, Ast{\'{e}}risque, No. 298, Soci{\'{e}}t{\'{e}}
  Math{\'{e}}matique de France, Paris, 2005, pp.~271--318.

\bibitem{RR}
M.~Rapoport and M.~Richartz, \emph{On the classification and specialization of
  {$F$}-isocrystals with additional structure}, Compositio Math. \textbf{103}
  (1996), no.~2, 153--181.

\bibitem{Reu}
D.~Reuman, \emph{Formulas for the dimensions of some affine {Deligne}-{Lusztig}
  varieties}, Michigan Math. J. \textbf{52} (2004), no.~2, 435--451.

\bibitem{Ta}
J.~Tate, \emph{Classes d'isog{\'{e}}nie de vari{\'{e}}t{\'{e}}s
  ab{\'{e}}liennes sur un corps fini (d'apr{\`{e}}s {T}. {Honda})}, Lecture
  notes in mathematics, S{\'{e}}minaire Bourbaki, Vol. 1968/69: Exp. 347-363,
  vol. 179, Springer-Verlag, Berlin-New York, 1971.

\bibitem{vdGO}
G.~van~der Geer and F.~Oort, \emph{Moduli of abelian varieties: a short
  introduction and survey}, Moduli of curves and abelian varieties, Aspects
  Math. E33, Vieweg, Braunschweig, 1999, pp.~1--21.

\bibitem{Vas}
A.~Vasiu, \emph{Crystalline boundedness principle}, Ann. Sci. {\'{E}}cole Norm.
  Sup. (4) \textbf{39} (2006), no.~2, 245--300.

\bibitem{VieDim}
E.~Viehmann, \emph{The dimensions of some affine {Deligne}-{Lusztig}
  varieties}, Ann. Sci. {\'{E}}cole Norm. Sup. (4) \textbf{39} (2006), no.~3,
  513--526.

\bibitem{VieConncpt}
\bysame, \emph{Connected components of closed affine {Deligne}-{Lusztig}
  varieties}, Math. Ann. \textbf{340} (2007), no.~2, 315--333.

\bibitem{VieGlobal}
\bysame, \emph{The global structure of moduli spaces of polarized p-divisible
  groups}, math.AG/0703841v1, 2007.

\bibitem{VieModuli}
\bysame, \emph{Moduli spaces of $p$-divisible groups}, J. Algebraic Geom.
  \textbf{17} (2008), no.~2, 341--374.

\bibitem{WedOrdinariness}
T.~Wedhorn, \emph{Ordinariness in good reductions of {Shimura} varieties of
  {P}{E}{L}-type}, Ann. Sci. {\'{E}}cole Norm. Sup. (4) \textbf{32} (1999),
  no.~5, 575--618.

\bibitem{WedDim}
\bysame, \emph{The dimension of {Oort} strata of {Shimura} varieties of
  {P}{E}{L}-type}, Moduli of abelian varieties (Texel Island, 1999), Progr.
  Math., vol. 195, Birkh{\"{a}}user, Basel, 2001, pp.~441--471.

\bibitem{WedCong}
\bysame, \emph{Congruence relations in the {Siegel} case}, Functional analysis,
  {V}{I}{I} ({Dubrovnik}, 2001), Various Publ. Ser. (Aarhus), vol.~46, Univ.
  Aarhus, Aarhus, 2002, pp.~193--217.

\bibitem{Weil}
A.~Weil, \emph{Vari{\'{e}}t{\'{e}}s ab{\'{e}}liennes et courbes
  alg{\'{e}}briques}, Actualit{\'{e}}s Sci. Ind., no. 1064, Publ. Inst. Math.
  Univ. Strasbourg 8 (1946), Hermann, Paris, 1948.

\bibitem{Yu}
C.-F. Yu, \emph{On the slope stratification of certain {Shimura} varieties},
  Math. Zeit. \textbf{251} (2005), no.~4, 859--873.

\end{thebibliography}

\end{document}